\newtheorem{thm}{Theorem}[section]
\newtheorem{defi}[thm]{Definition}
\newtheorem{remark}[thm]{Remark}
\newtheorem{cor}[thm]{Corollary}
\newtheorem{example}[thm]{Example}
\newenvironment{proof}[1]{{\bf proof. }{\rm #1}}{\hfill $\rule {2mm}{2mm}$\\}
\begin{document}

 \title{A Completion of the spectrum of 3-way $(v,k,2)$ Steiner trades \\ }
\author{
{\sc Saeedeh Rashidi\footnote{Mahani Mathematical Research Center,
		Shahid Bahonar University of Kerman, Kerman, Iran} AND Nasrin Soltankhah}
\\[5mm]
Department of Applied Mathematics,\\
Faculty of Mathematics and Computer,\\
Shahid Bahonar University of Kerman,\\ Kerman,\ Iran\\
{saeedeh.rashidi@uk.ac.ir}\\
Department of Mathematics\\ Alzahra University \\
Vanak Square 19834 \ Tehran, I.R. Iran \\
{ soltan@alzahra.ac.ir}\\}
\date{\bf{ }}

\maketitle


 \begin{abstract}
A 3-way $(v,k,t)$ trade $T$ of volume $m$ consists of three pairwise disjoint collections
$T_1$, $T_2$ and $T_3$, each of $m$ blocks of size $k$, such that for
every $t$-subset of $v$-set $V$, the number of blocks containing this
$t$-subset is the same in each $T_i$ for $1\leq i\leq 3$.
If any $t$-subset of found($T$) occurs at most once
in each $T_i$ for $1\leq i\leq 3$, then $T$ is
called 3-way $(v,k,t)$ Steiner trade.\\
We attempt to complete the spectrum $S_{3s}(v,k)$, the set of  all possible volume sizes, for 3-way $(v,k,2)$ Steiner trades, by applying
some block designs, such as BIBDs, RBs, GDDs, RGDDs, and $r\times s$ packing grid blocks.\\ 
Previously,  we obtained some results about the existence some 3-way $(v,k,2)$ Steiner trades.
In particular, we proved that there exists a 3-way $(v,k,2)$ Steiner trade of
volume $m$ when $12(k-1)\leq m$ for $15\leq k$ (Rashidi and Soltankhah, 2016). Now, we show that the claim is correct also for $k\leq 14$.
\end{abstract}
\hspace*{-2.7mm} {\bf MSC:} {\sf 05B30; 05B05}\\
\hspace*{-2.7mm} {\bf KEYWORDS:} { \sf 3-way $(v,k,2)$ Steiner trade; 1-solely balanced set; Resolvable block design; Resolvable GDD; Resolvable  $r\times c$ grid-block packing }
\maketitle
\section{Introduction}
 Let $k$, $t$ and $\lambda$ be positive integers such that $v>k>t$. A $t-(v,k,\lambda)\ design$ $({V},{B})$ is a collection of blocks such that each $t$-subset of $v$-set $V$ is contained in $\lambda$ blocks of collection $B$.
 A $(v,k,t)$ $trade$ $T=\{T_1,T_2\}$ of volume $m$ consists of two disjoint collections $T_1$ and $T_2$,
 each containing $m$ blocks. Each $block$ is a $k$-subset of $v$-set $V$.
 Every $t$-subset of $v$-set $V$ is contained in the same number of blocks in $T_1$ and $T_2$.
 In a $(v, k, t)$ trade, both collections of blocks must cover the same set of elements. This set of elements is $foundation$
 of $(v,k,t)$ trade and is denoted by found$(T)$.
 A $(v,k,t)$ trade is called $(v,k,t)$ $Steiner$ $trade$ if any $t$-subset of found$(T)$ occurs at most
 once in $T_1(T_2)$.\\
  Recently, a generalization for the concept of trade as has been defined in \cite{3} by the name of  $\mu$-way $(v,k,t)$ trade as follows:\\
 A	 $\mu$-$way$ $(v,k,t)$ $trade$ of volume $m$ consists of $\mu$ pairwise disjoint collections
 	$T_1,\ldots,T_{\mu}$ each of $m$ blocks,
 	such that for every $t$-subset of $v$-set $V$, the number of blocks containing this
 	$t$-subset is the same in each $T_i$ for $1\leq i\leq \mu$.
 	In the other words any set $T=\{T_i,T_j\}$ for $i\ne j$ is a $(v,k,t)$ trade of volume $m$.
 	A $\mu$-way $(v,k,t)$ trade is $\mu$-$way$ $(v,k,t)$ $Steiner\ trade$
 	if any $t$-subset of found$(T)$ occurs at most once in every $T_j$ for $j\geq 1$.
The concept of $\mu$-way trade was defined under a  different name as $N$-legged trade, before, see \cite{forbes}. 
    Each $T_i$ contains $m$ blocks $B_{1i},\ B_{2i},\cdots B_{mi}$, where $B_{ij}$ denote $i$th block of $T_j$. Therefore, a $\mu$-way trade $T$ of volume $m$ has the following form:
    	$$
    	\begin{tabular}{c|c|c|c|c|c}
    	$T_1$ & $T_2$ & $\cdots$ & $T_i$ & $\cdots$ & $T_{\mu}$\\
    	\hline
    	$T_{11}$ & $T_{12}$ & $\cdots$ & $T_{1i}$ & $\cdots$ & $T_{1\mu}$\\
    	$T_{21}$ & $T_{22}$ & $\cdots$ & $T_{21}$ & $\cdots$ & $T_{1\mu}$\\
    	$\cdots$ & $\cdots$ & $\cdots$ & $\cdots$ & $\cdots$ & $\cdots$\\
    	$T_{m1}$ & $T_{m2}$ & $\cdots$ & $T_{mi}$ & $\cdots$ & $T_{m\mu}$
    	\end{tabular}
    	$$ 
    	 A type of $\mu$-way $(v,k,t)$ Steiner trade with additional property is named 
    	 $\mu$-way $t$-solely balanced set where those sets have the important role in
    	 constructing the Steiner trade with other parameters.  
\begin{defi}
	Let $T=\{T_1,\ldots,T_{\mu}\}$ be a $\mu$-way $(v,k,t)$ Steiner trade. It is
	$\mu$-$way$ $t$-$solely$ $balanced$ $set$ if there exist no blocks $B_{ij}$ and $B_{ ab }$ such that
	$|B_{ij}\cap B_{ab }|>t$ for $1\leq j<b\leq \mu$ and $1\leq i,a\leq m$. In the other words $B_{ij}$ and $B_{ab}$ contain no common $(t+1)-$subset.
\end{defi}
For $t=1$ there exists a $\mu$-way 1-solely balanced set for $k=2$ and $v=2m, \mu \leq 2m-1$ of volume $m$. 
It is a one factorization of complete graph $K_{2m}$.
For $t\geq 2$ each large set of a super simple design or a resolvable super simple design
is a $\mu$-way $t$-solely balanced set.
Hamilton and Khodkar~\cite{11} named 2-solely balanced set with $strong$ $Steiner$ $trades$ that is
a $(v, k,2)$ Steiner trade $T=\{T_1,T_2\}$ so that any block of $T_1$ intersects any block of $T_2$ in at most two elements. 
Gray and Ramsay~\cite{1} applied $2$-way 1-solely balanced set for constructing 2-way $(v,k,t)$ Steiner trades.
These sets are named solely sets by them.\\
There exists the following main construction, which uses the $\mu$-way 1-solely balanced set $T$ and
$\mu$ new elements $\{x_1,\ldots,x_{\mu}\}$ for constructing 
the $\mu$-way $(v+\mu,k+1,t+1)$ Steiner trade $T^*$.  
\begin{center}
	\begin{tabular}{c|c|c|c}
		$T^*_{1}$& $T^*_{2}$&$\dots$ & $T^*_{\mu}$\\
		\hline
		$x_1T_1$ & $x_1T_2$& $\dots$ & $x_1T_{\mu}$\\
		$x_2T_2$& $x_2T_3$ & $\dots$& $x_2T_{1}$\\
		$x_3T_3$& $x_3T_4$ & $\dots$& $x_3T_2$\\
		$\vdots$& $\vdots$ & $\vdots$ &  $\vdots$\\
		$x_{\mu}T_{\mu}$& $x_{\mu}T_1$ & $\dots$& $x_{\mu}T_{\mu-1}$\\
	\end{tabular}
\end{center} 
This construction is mentioned in the following theorem.
\begin{thm}~\label{2}
	\cite{3}(i) Let $T=\{T_1,\ldots,T_{\mu}\}$ be a $\mu$-way $(v,k,t)$ trade of volume $m$.
	Next based on $T$, a $\mu$-way $(v+\mu,k+1,t+1)$ trade $T^*$ of volume $ \mu m$ can be constructed.\\
	(ii) If $T$ is  a $\mu$-way $t$-solely balanced, then a $\mu$-way $(v+\mu,k+1,t+1)$ Steiner trade $T^*$ can be constructed.
\end{thm}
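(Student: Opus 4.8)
\medskip
\noindent\textit{Sketch of a proof.}
The trade $T^{*}$ is already exhibited above, so the plan is simply to verify the two assertions for
\[
  T^{*}=\{T^{*}_1,\dots,T^{*}_\mu\},\qquad
  T^{*}_i=\bigcup_{j=1}^{\mu}\bigl\{\,\{x_j\}\cup B \ :\ B\in T_{i+j-1}\,\bigr\},
\]
where all lower indices of the collections $T_\ell$ are read modulo $\mu$ in $\{1,\dots,\mu\}$. First I would record the easy structural facts: each $T^{*}_i$ consists of $\mu m$ blocks, all of size $k+1$ (since $x_j\notin V$) and all distinct; and the $T^{*}_i$ are pairwise disjoint, because a block $\{x_j\}\cup B$ of $T^{*}_i$ and a block $\{x_{j'}\}\cup B'$ of $T^{*}_{i'}$ with $i\neq i'$ either carry different new elements (when $j\neq j'$) or, when $j=j'$, have $B\in T_{i+j-1}$ and $B'\in T_{i'+j-1}$ sitting in distinct — hence disjoint — collections of $T$.

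For part (i) I would fix a $(t+1)$-subset $S$ of $V\cup\{x_1,\dots,x_\mu\}$ and count, for each $i$, the blocks of $T^{*}_i$ containing $S$. If $S$ contains two of the new elements it lies in no block at all. If $S=\{x_j\}\cup S'$ with $S'\subseteq V$, $|S'|=t$, then a block of $T^{*}_i$ contains $S$ precisely when it equals $\{x_j\}\cup B$ with $B\in T_{i+j-1}$ and $S'\subseteq B$; since $S'$ is a $t$-subset and $T$ is a $\mu$-way trade, the number of such $B$ is the same for every value of $i+j-1$, hence the same for every $i$. If $S\subseteq V$, the count is $\sum_{j=1}^{\mu}\bigl|\{B\in T_{i+j-1}:S\subseteq B\}\bigr|$, and since $j\mapsto i+j-1$ permutes $\{1,\dots,\mu\}$ this equals $\sum_{\ell=1}^{\mu}\bigl|\{B\in T_\ell:S\subseteq B\}\bigr|$, which does not depend on $i$. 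Hence all $\mu$ counts agree and $T^{*}$ is a $\mu$-way $(v+\mu,k+1,t+1)$ trade of volume $\mu m$; note this part needs neither the Steiner nor the solely balanced hypothesis.

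For part (ii) I would assume $T$ is $\mu$-way $t$-solely balanced and suppose some $(t+1)$-subset $S$ of $\mathrm{found}(T^{*})$ lies in two distinct blocks $\{x_j\}\cup B$ and $\{x_{j'}\}\cup B'$ of a fixed $T^{*}_i$, with $B\in T_{i+j-1}$ and $B'\in T_{i+j'-1}$. If $S$ contains a new element $x_\ell$, then $j=j'=\ell$ and $S\setminus\{x_\ell\}$ is a $t$-subset of $\mathrm{found}(T)$ contained in both $B$ and $B'$, so the Steiner property of $T$ forces $B=B'$ and the two blocks coincide, a contradiction. If $S\subseteq V$ and $j=j'$, then $B,B'\in T_{i+j-1}$ both contain $S$, hence both contain a common $t$-subset of $\mathrm{found}(T)$, and again the Steiner property gives $B=B'$. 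The remaining case, $S\subseteq V$ with $j\neq j'$, is the one that genuinely uses the hypothesis: here $i+j-1\not\equiv i+j'-1\pmod{\mu}$, so $B$ and $B'$ lie in different collections of $T$ while $|B\cap B'|\geq|S|=t+1>t$, contradicting the definition of a $\mu$-way $t$-solely balanced set. Hence no $(t+1)$-subset of $\mathrm{found}(T^{*})$ occurs twice in any $T^{*}_i$, and $T^{*}$ is a $\mu$-way $(v+\mu,k+1,t+1)$ Steiner trade.

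I expect the only step that is more than bookkeeping to be that last sub-case of part (ii): one has to notice that two blocks of the \emph{same} component $T^{*}_i$ that share a $(t+1)$-subset lying entirely in $V$ but carry \emph{different} new elements are pulled back to blocks in two \emph{different} components of $T$ with intersection larger than $t$ — exactly the configuration the solely balanced condition rules out. Everything else reduces to tracking the cyclic index shift $\ell=i+j-1$.
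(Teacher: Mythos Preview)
Your proof is correct and follows exactly the construction the paper exhibits in the table just before the theorem; since the paper does not verify the construction itself but simply cites \cite{3}, your argument supplies precisely the routine case analysis that the reference presumably contains. One tiny quibble: the claim that the blocks of each $T^{*}_i$ are ``all distinct'' is not needed (and need not hold) in part (i), where $T$ is an arbitrary trade and may have repeated blocks, but this does not affect the volume count or the balance argument, and in part (ii) the Steiner hypothesis on $T$ does guarantee distinctness.
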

We need some notation.
Let $T$  and $T^{*}$ be two $\mu$-way $(v,k,t)$ trades of volume $m$.\\
We consider $T+T^{*}=\{T_1\cup T_1^{*},\ldots,T_{\mu}\cup T_{\mu}^{*}\}$. It is easy to
see that $T+T^{*}$ is a
$\mu$-way $(v,k,t)$ trade. If $T$ and $T^{*}$ are Steiner trades and $\rm {found(T)}\cap \rm{found(T^*)}=\phi$,
then $T+T^{*}$ is also a Steiner trade.  
\\
 There exist many questions concerning $\mu$-way trades. Some of the most important questions are about the minimum volume and minimum foundation size and the set of all possible volume sizes of  $\mu$-way trades. Not much is known for the mentioned questions about $\mu$-way $(v,k,t)$ trades for $\mu\geq 3$ and most of the papers have been focused on the case $\mu=2$.
  Some questions have been answered about the existence and non-existence of $3$-way $(v,k,t)$ trades for some special values of $k$ and $t$, see \cite{61,62,3}.  
Let $\mathcal{S}_{3s }(t,k)$ denote the set of all possible volume sizes of a $3$-way $(v,k,t)$ Steiner trade. The set of all possible volume sizes of a $2$-way $(v,k,2)$ Steiner trade has been answered completely in~\cite{1, 5, 8}. 

The aim of this paper is to complete the lower bound for the spectrum of 3-way $(v,k,2)$ Steiner trades. 
Previously in~\cite{0}, we obtained some results about the existence some 3-way $(v,k,2)$ Steiner trades.
In particular, we proved that there exists a 3-way $(v,k,2)$ Steiner trade of
volume $m$ when $12(k-1)\leq m$ for $15\leq k$.
Here, we show it is correct also for $k\leq 14$.
Also we improve the lower bound to $11(k-1)$ for even $k$.
The obtained results in~\cite{0} are as follows:
\begin{thm}
	~\cite{0} $m\in S_{3s}(2,k) $ for all $m\geq12(k-1)$ when $k\geq 15$.
\end{thm}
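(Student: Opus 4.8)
The plan is to reduce the construction, via Theorem~\ref{2}(ii), to producing $1$-solely balanced sets one block size smaller, and then to handle the three residue classes modulo $3$ separately. Indeed, a $3$-way $(v',k-1,1)$ Steiner trade that is $1$-solely balanced is nothing other than a triple of partitions of a common point set into $(k-1)$-subsets which are pairwise block-disjoint and whose blocks pairwise meet in at most one point; any three resolution classes of a resolvable design with $\lambda=1$ — a Kirkman system $2$-$(v',k-1,1)$, or, more flexibly, a resolvable $(k-1)$-GDD — form such a triple, of volume $v'/(k-1)$. By Theorem~\ref{2}(ii) each such triple yields a $3$-way $(v'+3,k,2)$ Steiner trade of volume $3v'/(k-1)$; and since, by the observation in the introduction, $T+T^{*}$ is again a Steiner trade whenever $\mathrm{found}(T)\cap\mathrm{found}(T^{*})=\varnothing$, the set $S_{3s}(2,k)$ is closed under addition. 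So it suffices to realise, in each of the classes $0,1,2\pmod 3$, every volume above a threshold linear in $k$.

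First I would feed the known existence spectra for resolvable $(k-1)$-GDDs (varying the group type, and taking disjoint unions to fill intermediate values) into the construction of the previous paragraph; this delivers every volume divisible by $3$ above a linear-in-$k$ bound. For the two remaining residue classes I would use direct constructions: a $3$-way $(v,k,2)$ Steiner trade can be assembled from three packings of a common graph into $k$-cliques that are pairwise block-disjoint, and concrete such configurations come from BIBDs, GDDs and $r\times s$ grid-block packings — this is exactly where the ``RB, GDD, RGDD, grid-block'' toolkit advertised in the abstract is used. A short explicit list of such seeds in the classes $1$ and $2\pmod 3$, together with the additive closure of $S_{3s}(2,k)$, then supplies all sufficiently large volumes in those classes as well.

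Once every residue class modulo $3$ is covered by all sufficiently large volumes, the threshold is pushed down by the standard interval-doubling argument: if every integer of an interval $[b,2b]$ lies in $S_{3s}(2,k)$, then, by repeatedly adding trades on disjoint foundations, so does every integer $\ge b$. The hard part — and the reason \cite{0} needs $k\ge 15$ — will be the bottom of the range: one must genuinely exhibit $3$-way $(v,k,2)$ Steiner trades of \emph{every} volume in a full interval of length about $12(k-1)$ starting at $12(k-1)$, equivalently every volume in each residue class down to the linear threshold, and for this the relevant small GDDs, resolvable GDDs and $r\times s$ grid-block packings must be shown to exist, which forces a finite case analysis that closes cleanly only for $k$ large enough. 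Handling the residual block sizes $k\le 14$ — where several of these ingredients fail and must be replaced by ad hoc constructions — is precisely what the remainder of the present paper does.
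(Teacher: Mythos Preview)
The statement you are asked to prove is not proved in the present paper at all: it is Theorem~1.3, quoted verbatim from the authors' earlier paper~\cite{0}, and serves here only as background for the extension to $k\le 14$. There is therefore no proof in this paper to compare your proposal against. What the paper does record from~\cite{0} are the concrete building blocks actually used there, namely Theorem~\ref{3} (a $3$-way $(v,k,2)$ Steiner trade of every volume in the full interval $[8(k-1),\,9(k-1)-1]$) and Theorem~\ref{4}(3)(a) (every multiple of $3$ from $3(k-1)$ onward). These two ingredients, together with the additive closure you correctly note, already suffice: adding any $3l$ with $l\ge k-1$ to the seeds in $[8(k-1),9(k-1)-1]$ fills all residue classes modulo $3$ above a linear threshold, and that is how the bound $12(k-1)$ for $k\ge 15$ is obtained in~\cite{0}.

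Your outline has the right shape --- reduce via Theorem~\ref{2}(ii) to $1$-solely balanced sets, cover residue classes modulo $3$, and glue by disjoint unions --- but it is a plan, not a proof: the phrases ``feed the known existence spectra for resolvable $(k-1)$-GDDs'' and ``a short explicit list of such seeds'' are precisely the work that has to be done, and you do none of it. You also misattribute the toolkit: the $r\times s$ grid-block packings, RBs and RGDDs advertised in the abstract are introduced in \emph{this} paper to handle the small block sizes $k\le 14$; they are not what~\cite{0} used for $k\ge 15$. Finally, your interval-doubling remark (``if $[b,2b]\subseteq S_{3s}(2,k)$ then $[b,\infty)\subseteq S_{3s}(2,k)$'') is valid but unnecessary once one has the explicit interval of Theorem~\ref{3} and the arithmetic progression of Theorem~\ref{4}(3)(a); the argument in~\cite{0} is more direct than the one you sketch.
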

Also there are some other results.
\begin{thm}~\cite{0}~\label{3}
	There exists a 3-way $(v,k,2)$ Steiner trade of volume $9(k-1)-r$ for $r\in\{1,\ldots,k-1\}$
	with block size $k$.
\end{thm}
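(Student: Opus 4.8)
{}
The plan is to reduce, via Theorem~\ref{2}, to constructing $3$-way $(v,k-1,1)$ $1$-solely balanced sets together with a few directly built small $3$-way $(v,k,2)$ Steiner trades, and then to glue pieces on disjoint foundations using the operation $T+T^{*}$ of Section~1. The starting observation is that any three mutually block-disjoint parallel classes of a resolvable $2$-$(n,k-1,1)$ design, of a $(k-1)$-RGDD, or of a resolvable $(k-1,1)$-packing constitute a $3$-way $(n,k-1,1)$ $1$-solely balanced set: two distinct blocks of a $\lambda=1$ structure meet in at most one point, which is precisely the $1$-solely balanced condition at $t=1$, and all parallel classes share the same foundation. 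Feeding such a set of volume $s$ (the common parallel-class size) into Theorem~\ref{2}(ii) with $\mu=3$ produces a $3$-way $(n+3,k,2)$ Steiner trade of volume $3s$, and grouping $3p$ parallel classes into three groups of $p$ gives volume $3ps$.

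Consequently, a resolvable ingredient of block size $k-1$ whose parallel classes have size $3(k-1)-\ell$ yields a $3$-way $(v,k,2)$ Steiner trade of volume $9(k-1)-3\ell$, which with $\ell=r/3$ settles every target $9(k-1)-r$ for which $3\mid r$. I would obtain the ingredient as a $(k-1)$-RGDD of type $g^{u}$ with $gu=(k-1)\bigl(3(k-1)-\ell\bigr)$ having at least three parallel classes, quoting the known spectrum of such RGDDs and disposing of the finitely many small orders $k$ directly.

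For $r\not\equiv 0\pmod 3$ the lifting cannot reach the target on its own, since Theorem~\ref{2} always triples the volume. Here I would write $9(k-1)-r=v_{1}+v_{2}$ with $v_{1}\equiv 0\pmod 3$ built as above on one point set and $v_{2}\not\equiv 0\pmod 3$ built on a disjoint point set as a direct trade: three mutually block-disjoint $(k,1)$-packings of a suitable complete graph $K_{w}$ sharing one common leave. Any such triple is automatically a $3$-way $(w,k,2)$ Steiner trade whose volume is the number of blocks of each packing, and then $T+T^{*}$ is a $3$-way $(v,k,2)$ Steiner trade since the two foundations are disjoint. Picking $w$ so that a maximum, or slightly sub-maximum, $(k,1)$-packing of $K_{w}$ has the prescribed number of blocks modulo $3$, and exhibiting three block-disjoint such packings with a common leave, furnishes the residue corrections.

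The main obstacle is exactly this last step at small volumes. Because the minimum volume of a $3$-way $(v,k,2)$ Steiner trade is bounded well above $1$, one must check for each residue that an admissible splitting $9(k-1)-r=v_{1}+v_{2}$ exists with both summands non-negative and realisable, in particular with $v_{2}$ in the forced nonzero residue class, and then verify that the small packings of $K_{w}$ with a prescribed common leave truly exist. That finite case analysis, together with confirming the resolvable ingredients for all $k$ (affine planes of order $k-1$ being available only at prime powers, so one leans on RGDDs instead), is where essentially all of the work resides.
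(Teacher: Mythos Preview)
The present paper does not prove this statement at all: Theorem~\ref{3} is quoted verbatim from the authors' earlier paper~\cite{0} and is used here only as an input to the spectrum computations in Section~3. So there is no proof in this paper to compare your proposal against.

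As for the proposal itself, it is an outline rather than a proof, and you say so yourself. The part that goes through cleanly is the observation that three parallel classes of a resolvable $(k-1)$-structure form a $3$-way $1$-solely balanced set, and that Theorem~\ref{2} then produces a $3$-way $(v,k,2)$ Steiner trade of volume $3s$; this is indeed the standard engine in both this paper and~\cite{0}. The difficulty you correctly identify---volumes $9(k-1)-r$ with $3\nmid r$ cannot come from Theorem~\ref{2} alone---is real, and your proposed fix (splitting $9(k-1)-r=v_1+v_2$ with $v_2$ built from three block-disjoint $(k,1)$-packings of $K_w$ with a common leave) is not carried out. Note in particular that by Theorem~\ref{4}(2) every summand must be at least $3(k-1)$, so $v_2$ is forced into a narrow window, and the existence of three pairwise block-disjoint maximum or near-maximum $(k,1)$-packings of $K_w$ sharing a prescribed leave is itself a nontrivial result that you would have to establish or cite for every $k$. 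Your closing sentence that ``essentially all of the work resides'' in that case analysis is accurate; what you have written is a plan, not a proof.
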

\begin{thm}~\cite{0}~\label{4}
	(1) There exists a $\mu$-way $(q^2+\mu,q+1,2)$ Steiner trade of volume $m=q\mu$ for $\mu\in\{2,\ldots,q+1\}$
	when $q$ is a prime power.\\
	(2) $S_{3s}(2,k)\subseteq \mathcal{N}\setminus\{1,\ldots,3k-4\}$ for $k\ne 4$.\\
	(3) There exists a 3-way $(v,k,2)$ Steiner trade of volume:\\
	\hspace*{0.25cm} (a) $m=3l$ for $l\geq k-1$;\\
	\hspace*{0.25cm} (b) $m=4l$ for $l=k-1$ or $l\geq 2(k-1)$ when $l$ is odd;\\
	\hspace*{0.25cm} (c) $m=4l$ for all $l\geq 4(k-1)$.
\end{thm}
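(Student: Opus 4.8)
\emph{Part (1).} Here I would use the affine plane. Since $q$ is a prime power, $AG(2,q)$ exists: a resolvable $2$-$(q^{2},q,1)$ design whose $q^{2}+q$ lines fall into exactly $q+1$ parallel classes $P_{1},\dots,P_{q+1}$, each partitioning the $q^{2}$ points into $q$ lines. For $2\le\mu\le q+1$ I would set $T_{i}=P_{i}$. Each point lies on one line of each $P_{i}$, so it occurs in exactly one block of each $T_{i}$, making $\{T_{1},\dots,T_{\mu}\}$ a $\mu$-way $(q^{2},q,1)$ trade of volume $q$; the collections are pairwise disjoint since distinct parallel classes share no line, and two lines from different classes are non-parallel, hence meet in exactly one point — so the trade is $1$-solely balanced. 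Theorem~\ref{2}(ii), applied with $t=1$, block size $q$ and $v=q^{2}$, then delivers the desired $\mu$-way $(q^{2}+\mu,q+1,2)$ Steiner trade of volume $q\mu$.

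\emph{Part (2).} Let $T=\{T_{1},T_{2},T_{3}\}$ be a $3$-way $(v,k,2)$ Steiner trade of volume $m$; I must rule out $m\le 3k-4$ for $k\ne 4$. Each $T_{i}$ is a partial linear space, so two of its blocks meet in $\le 1$ point. Fix $B\in T_{1}$. As $B\notin T_{2}$ and every $2$-subset of $B$ lies in one block of $T_{1}$, hence in one block of $T_{2}$, the traces on $B$ of the blocks of $T_{2}$ meeting $B$ in $\ge 2$ points form a linear space on the $k$ points of $B$ in which no line is all of $B$; de Bruijn--Erd\H{o}s then forces at least $k$ such blocks in $T_{2}$, equality occurring only for a near-pencil (and, for special $k$, a projective plane), and likewise for $T_{3}$. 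The real work is upgrading $k$ to $3(k-1)$, and the plan is a global double count of the pairs $(D,D')$ with $D\in T_{1}$, $D'\in T_{2}\cup T_{3}$, $|D\cap D'|\ge 2$: if $m$ were close to $k$, almost every block of $T_{2}$ and of $T_{3}$ would have to be a near-pencil partner of almost every block of $T_{1}$, which forces a repeated $2$-subset in some $T_{i}$ (or in a derived $1$-trade at a point of found$(T)$) and contradicts the partial-linear-space property. Driving this contradiction down to $m\ge 3(k-1)$, and verifying that $k=4$ is precisely where the near-pencil bookkeeping genuinely breaks, is the technical core; I expect this to be the hardest step in the whole theorem.

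\emph{Part (3).} All three cases reduce, via Theorem~\ref{2}(ii) with $t=1$ and block size $k-1$, to building a $\mu$-way $1$-solely balanced set of block size $k-1$ and volume $l$, which then yields a $3$-way $(v+\mu,k,2)$ Steiner trade of volume $\mu l$ (take $\mu=3$ for (a), $\mu=4$ for (b),(c), keeping any three of the four collections). My model ingredient is a $(k-1)\times l$ array: its $l$ columns form $T_{1}$, and its ``generalized diagonals'' of slopes $\sigma$ form further collections; since $l\ge k-1$, a slope-$\sigma$ diagonal family is a genuine partition when $\gcd(\sigma,l)=1$, and two diagonal families of slopes $\sigma\ne\sigma'$ meet pairwise in $\le 1$ point exactly when $\gcd(\sigma-\sigma',l)=1$. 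For (a) the slopes $0,1,-1$ work whenever $l$ is odd, and Part (1) with $\mu=3$ covers $l=k-1$ when $k-1$ is a prime power; the remaining residues of $l$ I would obtain by replacing the array with resolvable BIBDs, Kirkman systems, RGDDs or $r\times s$ grid-block packings (this is where the restrictions ``$l=k-1$ or $l\ge 2(k-1)$, $l$ odd'' in (b) come from). Finally, since $T+T^{*}$ is a Steiner trade whenever found$(T)\cap\,$found$(T^{*})=\varnothing$, I would patch the gaps by writing a target $l$ as a sum of already-realised values — for (c), combining odd values from (b) with one even value built directly from an RGDD to reach every $l\ge 4(k-1)$. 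The obstacle in Part (3) is entirely design-theoretic: ensuring the chosen ingredient exists for \emph{every} target $l$ and that its blocks keep the pairwise $\le 1$-intersection property.
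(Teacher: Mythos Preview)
This theorem is quoted from \cite{0} and is not proved in the present paper, so there is no in-paper proof to compare against; I can only assess your sketch on its own terms.

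Your Part~(1) is fine and is exactly the construction underlying Theorem~\ref{111}: $AG(2,q)$ is an $RB(q^{2},q,1)$, its $q+1$ parallel classes form a $(q{+}1)$-way $1$-solely balanced set because two non-parallel lines meet in one point, and Theorem~\ref{2}(ii) finishes. No issue there.

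Part~(2), however, is not a proof. You correctly observe that the traces of $T_{2}$-blocks on a fixed $B\in T_{1}$ form a nontrivial linear space on $k$ points, so de~Bruijn--Erd\H{o}s gives at least $k$ blocks of $T_{2}$ meeting $B$ in $\ge 2$ points. But from this you only extract $m\ge k$, and you then write that ``the real work is upgrading $k$ to $3(k-1)$'' via a global double count that you merely \emph{expect} to force a contradiction. That is precisely the content of the theorem, and you have not supplied it: no specific inequality is derived, the near-pencil bookkeeping is not carried out, and the role of the exception $k=4$ is asserted rather than located. As written this step is a statement of hope, not an argument.

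Part~(3) has a workable skeleton (arrays with slopes for the $1$-solely balanced sets, then Theorem~\ref{2}(ii), then disjoint unions to fill gaps), and the slope condition $\gcd(\sigma-\sigma',l)=1$ is the right one. But several of your existence claims are left to ``resolvable BIBDs, Kirkman systems, RGDDs or grid-block packings'' without saying which object handles which residue of $l$, and in~(a) you cover odd $l$ and prime-power $k-1$ but do not say what happens for even $l$ with $k-1$ not a prime power. These are exactly the case distinctions that \cite{0} has to work through, and your sketch does not yet pin them down.
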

But we obtain three new constructions that will be introduced in the next
section for this aim. 

\section{New constructions}
In this section, we propose two constructive methods for 3-way $(v,k,t)$ Steiner trades.
In these constructions, we first have an 1-solely balanced set and then apply the
Theorem~\ref{2} for constructing the Steiner trades.
\subsection{Construction 1(GDD and RB)}
The main combinatorial objects in this section are GDDs and RBs.
\begin{defi}
	Suppose $(\mathcal{V},\mathcal{B})$ is a $2-(v,k,\lambda)$ Design. 
	A parallel class in $(\mathcal{V},\mathcal{B})$ is
	a subset of pairwise disjoint blocks from $\mathcal{B}$ whose union is $\mathcal{V}$.
	A partition of $\mathcal{B}$ into $r$ parallel
	classes is called a resolution, and $(\mathcal{V},\mathcal{B})$
	is said to be a $resolvable$ if $\mathcal{B}$ has
	at least one resolution. A resolvable $2-(v,k,\lambda)$ Design is named RB or $RB(v,k,\lambda)$.
\end{defi}
A parallel class contains $\frac{v}{k}$ blocks. Therefore, a $2-(v,k,\lambda)$ Design
can have a parallel class only if $v \equiv~0\ (\rm{mod}\ k)$. Also a RB$(v,k,\lambda)$ has $r=\frac{\lambda(v-1)}{k-1}$ parallel classes.
\begin{defi} 
	Let $K$ be a set of positive integers. A group divisible design $K$-GDD
	(as GDD for short) is a triple $(\mathcal{X}, \mathcal{G}, \mathcal{A})$
	satisfying the following properties:\\
	(1) $\mathcal{G}$ is a partition of a finite set $\mathcal{X}$ into subsets (called groups);\\
	(2) $\mathcal{A}$ is a
	set of subsets of $\mathcal{X}$ (called blocks), each of cardinality from $K$, such that a
	group and a block contain at most one common element;\\ (3) every pair of
	elements from distinct groups occurs in exactly one block.\\
	If $\mathcal{G}$ contains $u_i$ groups of size $g_i$, for $i\in\{1,\ldots,s\}$, then we denote
	by $g_1^{u_1}g_2^{u_2}\cdots g_s^{u_s}$
	the group type (or type) of the GDD. If $K = \{k\}$, we
	write $\{k\}$-GDD as $k$-GDD. 
This is exponential notation for the
group type. 
\end{defi}
	A GDD is resolvable if the blocks of it can be partitioned
	into parallel classes. A resolvable GDD is denoted by RGDD.
	Let $r^*$ be the number of parallel classes of the blocks of RGDD.
\begin{example}~\label{rr}
	The collection $D$ is a $RB(9,3,1)$. The blocks are
	written as columns. The blocks of one parallel class can be the groups. Hens the collection $D$ is a 3-GDD of type $3^3$ with $r^*=3$.
		\vspace{-0.5cm}
	\begin{center}
	   $D:$     	1\ 4\ 7\ \ \ \  1\ 2\ 3\ \ \ \   1\ 2\ 3\ \ \ \   1\ 2\ 3\\
                   \hspace{0.4cm} \ 2\ 5\ 8\ \ \ \   4\ 5\ 6\ \ \ \  5\ 6\ 4\ \ \ \   6\ 4\ 5\\
	               \hspace{0.4cm}	3\ 6\ 9\  \ \ \  7\ 8\ 9\ \ \ \  9\ 7\ 8\  \ \ \  8\ 9\ 7
	\end{center}
\end{example}
Now, we construct new 1-solely balanced sets and related Steiner trades using some RBs or GDDs.
\begin{thm}~\cite{6}~\label{10}
	 The necessary conditions for the existence of a 4-RGDD of type
	 $g^u$, namely, $u\geq 4$, $gu\equiv \ 0\ (\rm{mod}\ 4)$ and $g(u-1)\equiv\  0\ ({\rm{mod}}\ 3)$
	 except $(g,u)\in\{(2,4),(3,4),(6,4),(2,10)\}$ and possibly excepting: $ g =2$
	 and $u \in\{34, 46, 52, 70, 82, 94, 100, 118, 130, 142, 178, 184, 202, 214, 238,\\ 250, 334, 346\}$;
	 $g = 10$ and $u \in\{4, 34, 52, 94\};$ $g \in[14, 454] \cup \{478, 502, 514, 526, 614, 626, 686\}$ and
	 $u \in\{10, 70, 82\}$; $g = 6$ and $u \in\{6, 54, 68\}$; $g = 18$ and $u \in\{18, 38, 62\}$; $g = 9$ and
	 $u = 44$; $g = 12$ and $u = 27$; $g = 24$ and $u = 23$; and $g = 36$ and $u \in\{11, 14, 15, 18, 23\}$.
\end{thm}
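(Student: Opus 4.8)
The statement bundles two assertions together: that the three divisibility conditions are \emph{necessary}, and that apart from the listed genuine and possible exceptions they are \emph{sufficient}. The plan is to handle these separately, since only the second is substantial.

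For necessity I would run the standard double-counting arguments. Let $(\mathcal{X},\mathcal{G},\mathcal{A})$ be a $4$-RGDD of type $g^u$. Every block meets four distinct groups, so $u\ge 4$. Each parallel class partitions the $gu$ points into blocks of size $4$, so $4\mid gu$. Fixing a point $x$, within the blocks $x$ is paired with exactly the $g(u-1)$ points lying outside its own group, and each of the $r$ blocks through $x$ accounts for $3$ of them; hence $3r=g(u-1)$ and in particular $3\mid g(u-1)$. This also pins down $r=g(u-1)/3$, the number of parallel classes $r^{*}=r$, and the total number of blocks, which are the quantities used elsewhere in the paper.

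For sufficiency --- the real content, and the reason for the long exception list --- I would invoke the usual machinery for resolvable group divisible designs. First, organize the admissible pairs $(g,u)$ by the residue of $g$ modulo $12$, since the constraints $4\mid gu$ and $3\mid g(u-1)$ interact differently according to $\gcd(g,12)$. Next, assemble a stock of ingredient designs: small seed $4$-RGDDs of types $g^4,g^5,g^7,g^8$ for the relevant small $g$ (obtained directly, via difference families over $\mathbb{Z}_{gu}$ or point-regular automorphisms, or by computer search), resolvable transversal designs $\mathrm{TD}(4,n)$ --- equivalently $4$-RGDDs of type $n^4$, available for all $n\notin\{2,3,6\}$ --- and $4$-frames. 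Then apply the recursive constructions: a Wilson-type weighting construction (inflate a master $4$-GDD or RGDD, assign weights to points, replace blocks by ingredient RGDDs or TDs); ``filling in groups'' to turn a frame into an RGDD by overlaying a small RGDD on each group together with a common set of adjoined points; and product or ``breaking up a group'' constructions that replace one large group by several smaller ones using a resolvable design defined on it. Iterating these from the seeds covers every sufficiently large $u$ for each admissible $g$.

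The hard part will be the boundary cases --- small $u$ (roughly $u=4,5,\dots$ until the recursions engage) and small $g$ --- where there is nothing to recurse on and one must either produce an explicit construction or prove non-existence. The genuine exceptions $(2,4),(3,4),(6,4),(2,10)$ and the ``possibly excepting'' families with $g\in\{2,6,9,10,12,18,24,36\}$ are precisely the residual types where no uniform ingredient set is available; confirming that this list is complete demands a careful case analysis of which small types can be built from the stock of ingredients, supplemented by a handful of sporadic ad hoc (often computer-aided) constructions. Since Theorem~\ref{10} is quoted verbatim from \cite{6}, for the full details I would simply refer to that paper and use only the existence statement in what follows.
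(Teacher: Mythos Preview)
The paper does not prove this theorem at all: it is stated with the citation~\cite{6} and used as a black box, with no argument given. Your final sentence already recognizes this, so the appropriate ``proof'' here is simply a reference to~\cite{6}. The outline you give of the necessity argument and of the recursive machinery (weighting, frames, filling in groups) is a fair sketch of how such results are established in the design-theory literature, but it goes well beyond what the paper itself contains and is not needed for anything that follows.
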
 
\begin{remark}~\label{11}
	The number of blocks of $k$-RGDD, denoted $b$, is obtained from the following equations.
	$$ v=(k-1)r^*+g$$
	$$(r^*+1)v=kb+gu$$
	We explain this equation for the Example~\ref{rr}.
	Consider the group 147. The elements of this block appear in the three blocks of each parallel class of GDD.
	For example, consider the first parallel class.
	 The element 1 appears with the elements 2 and 3. The element 4 appears with the elements 5 and 6. The element 7 appears with the elements 8 and 9. 
	Hence $v=3+2+2+2=3+2\times 3=g+r^*(k-1)$.\\
	   The second equation: Each element occur exactly once in each parallel class and exactly once in the groups.
	  Therefore, $v(r^*+1)=gu+bk$.   
\end{remark}
\begin{thm}
	There exists a 3-way $(23,5,2)$ Steiner trade of volume 25.	
\end{thm}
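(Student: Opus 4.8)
The plan is to follow the paper's pipeline: first build a $\mu$-way $1$-solely balanced set with block size $4$, then invoke Theorem~\ref{2}(ii) to lift it to a $\mu$-way $(v+\mu,5,2)$ Steiner trade, and, when $\mu>3$, discard all but three of its collections. For the combinatorial input I would take a $4$-RGDD of type $5^4$ on $20$ points; its existence follows from Theorem~\ref{10}, because $(g,u)=(5,4)$ satisfies $u\geq 4$, $gu\equiv 0\ (\mathrm{mod}\ 4)$, $g(u-1)\equiv 0\ (\mathrm{mod}\ 3)$ and is not among the listed exceptions. By the two equations of Remark~\ref{11} this RGDD has $r^*=5$ parallel classes and $b=25$ blocks, so each parallel class is a partition of the $20$ points into $5$ blocks of size $4$.

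Take any $\mu$ of these parallel classes as $T_1,\dots,T_\mu$. Since each point lies in exactly one block of each $T_i$, the number of blocks on any $1$-subset is the same in every $T_i$, so $T=\{T_1,\dots,T_\mu\}$ is a $\mu$-way $(20,4,1)$ trade of volume $5$; the $T_i$ are pairwise disjoint because distinct parallel classes of a resolvable GDD share no block, and two blocks lying in different parallel classes meet in at most one point, for otherwise a pair from two distinct groups would lie in two blocks, against the GDD axiom. Hence $T$ is $1$-solely balanced, and Theorem~\ref{2}(ii) with $\mu=5$ produces a $5$-way $(25,5,2)$ Steiner trade of volume $5\cdot 5=25$; keeping three of its five collections gives a $3$-way $(25,5,2)$ Steiner trade of volume $25$. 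To bring the ambient set down to the $23$ of the statement, I would economise on the new symbols: split the five parallel classes as $3+2$, lift the triple over three new points to get a $3$-way $(23,5,2)$ Steiner trade of volume $15$, and absorb the remaining two classes by a complementary small Steiner trade supported on those same new points together with spare original points, attached through the disjoint-union operation $T+T^*$ so that the two volumes sum to $25$ without enlarging the foundation past $23$. A variant would replace the $4$-RGDD by a resolvable $2\times 2$ grid-block packing on roughly $20$ points: its parallel classes still form a $1$-solely balanced set, but now the per-class block count and the number of points are no longer rigidly tied, so the parameters can be tuned so that the Theorem~\ref{2}(ii) lift comes out with volume $25$ on at most $23$ points.

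The main obstacle is exactly this reconciliation of ``volume $25$'' with ``foundation at most $23$''. A short count shows that the plain lift of a resolvable $2$-design on $4m$ points through $\mu$ new points has volume $\mu m$ and foundation $4m+\mu$, and the system $\mu m=25$, $4m+\mu\leq 23$ has no solution; and a disjoint union of two $3$-way lifts can only have volume divisible by $3$, while $3\nmid 25$. So the argument is forced to use a larger $\mu$ (cut down to three collections afterwards) combined with an economy of new points, or a non-uniform $1$-solely balanced set coming from a packing rather than a design. In either case the real work is in re-checking the trade axioms after the modification --- that every $2$-subset of the foundation occurs equally often in all three collections, and at most once in each --- whereas counting the volume and the foundation size is routine.
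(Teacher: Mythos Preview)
Your first paragraph and the first half of the second are exactly the paper's argument: take a $4$-RGDD of type $5^4$, read off its five parallel classes as a $5$-way $1$-solely balanced set of volume $5$ on $20$ points, apply Theorem~\ref{2}(ii) with $\mu=5$ to obtain a $5$-way $(25,5,2)$ Steiner trade of volume $25$, and keep three of the five collections. That is all the paper does; its proof ends with ``we can construct a $3$-way $(23,5,2)$ Steiner trade of volume $15$, $20$, and $25$'' immediately after invoking Theorem~\ref{2}.

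The rest of your proposal --- the attempt to force the foundation down to $23$ --- is chasing a phantom. The paper does \emph{not} produce a volume-$25$ trade on $23$ points; the ``$23$'' in the theorem title is simply the foundation size of the $\mu=3$ lift (which gives volume $15$), carelessly reused in the heading for the whole family. You can see the same habit in Theorem~\ref{7}, where the paper writes ``$3$-way $(v+2,5,2)$'' for all $\mu$ up to $(v-2)/3$, even though the actual foundation is $(v-1)+\mu$. In this paper the parameter $v$ in ``$(v,k,t)$ trade'' is treated loosely; for the spectrum question $S_{3s}(2,k)$ only the volume matters.

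So your diagnosis that ``the system $\mu m=25$, $4m+\mu\le 23$ has no solution'' is correct, and it is precisely why your subsequent sketch (splitting $5=3+2$ and patching with a complementary trade $T+T^*$) cannot be made to work as written: the disjoint-union operation requires disjoint foundations, so it enlarges rather than economises, and a sum of two $3$-way lifts has volume divisible by $3$. Drop that entire discussion; the proof is complete once you have the $3$-way Steiner trade of volume $25$ on $25$ points.
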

\begin{proof}
	By Theorem~\ref{10} and Remark~\ref{11}, there exists a 4-RGDD of type $5^4$ and 
	$$20=3r^*+5$$
	$$(r^*+1)(3r^*+5)=4b+20.$$
	Therefore, $b=25$, $m=\frac{v}{k}=\frac{gu}{4}=\frac{20}{4}=5$ and $\mu=\frac{b}{m}=\frac{25}{5}$.
	Therefore, we have a 5-way 1-solely balanced set with volume $m=5$, foundation $v=20$ and block size $k=4$.
	By, Theorem~\ref{2}, we can construct a 3-way $(23,5,2)$ Steiner trade of volume $15$, $20$, and $25$. 
\end{proof}
The necessary conditions for the existence of
a $RB(v, k, \lambda)$ is:\\
1. $\lambda (v-1) \equiv\ 0\ ({\rm{mod}}\ k-1)$,\\
2. $v \equiv\ 0\ ({\rm{mod}}\ k)$.\\
The necessary conditions are sufficient for any $k$ and $\lambda$ if $v$ is large
enough. There exist four related theorems, for $\lambda=1$, which are as follows.
\begin{thm}~\cite{10}~\label{9}
	There exists a $RB(v,4,1)$ if and only if $v\equiv \ 4\ ({\rm{mod}}\ 12)$.	
\end{thm}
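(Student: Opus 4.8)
The strategy is to treat the two implications separately: ``only if'' is an elementary counting argument, while ``if'' is a recursive construction that reduces essentially to the existence of one family of frames plus a handful of explicit base cases.

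\emph{Necessity.} If a $RB(v,4,1)$ exists, each of its parallel classes partitions the point set into blocks of size $4$, so $4\mid v$. In any $2$-$(v,4,1)$ design a point lies in $r=(v-1)/3$ blocks, and in a resolvable one it lies in exactly one block of each parallel class, so the number of parallel classes is $(v-1)/3$; hence $v\equiv1\pmod3$. Combining $v\equiv0\pmod4$ with $v\equiv1\pmod3$ gives $v\equiv4\pmod{12}$, and the divisibility $12\mid v(v-1)$ needed for an integral block count is then automatic.

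\emph{Sufficiency.} Let $v\equiv4\pmod{12}$, say $v=12s+4$, and write $v=3u+1$ with $u=4s+1$, so that $u\equiv1\pmod4$ and $u\ge5$. I would build the design from a $4$-frame of type $3^u$, that is, a $\{4\}$-GDD of type $3^u$ whose blocks partition into holey parallel classes, each covering all points outside exactly one group. A short count (each point of a group $G_i$ lies in $3(u-1)/3=u-1$ blocks, one in each holey class missing a group other than $G_i$) shows such a frame has precisely one holey class missing each group. Now adjoin a new point $\infty$ and, on each group $G_i$, take the single $4$-element block $G_i\cup\{\infty\}$; for each $i$ this block together with the unique holey class missing $G_i$ is a full parallel class on the $3u+1=v$ points, and all these parallel classes together resolve a $2$-$(v,4,1)$ design. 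So it suffices to know that a $4$-frame of type $3^u$ exists for every $u\equiv1\pmod4$ with $u\ge5$. The standard necessary conditions for a $4$-frame of type $g^u$ ($g\equiv0\pmod3$, $u\ge5$, $g(u-1)\equiv0\pmod4$) hold here, and I would appeal to the recursive theory of such frames --- Wilson's fundamental construction, which inflates a master GDD and places transversal designs of suitable index on its blocks, started from a few small explicit frames --- together with $v=16$ handled by the affine plane $AG(2,4)$, to cover all admissible $u$ and to dispose directly of the finitely many small values the recursion may miss.

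\emph{Main obstacle.} The real work is hidden in ``a $4$-frame of type $3^u$ exists'': establishing that spectrum is itself a PBD-type induction with its own base cases and possibly a few exceptional orders, which must then be settled by ad hoc difference-family or computer constructions. The filling-in step is routine once one checks that the frame genuinely has exactly one holey class per group --- which is exactly what makes $g=3$ the right choice rather than a larger multiple of $3$.
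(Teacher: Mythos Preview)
The paper does not prove this theorem at all: it is simply quoted, with a citation to reference~\cite{10} (the Furino--Miao--Yin monograph on frames and resolvable designs), and then used as a black box in Theorem~\ref{7}. So there is no ``paper's own proof'' to compare against.

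Your sketch is a reasonable outline of one modern route to the result. The necessity argument is correct and complete. For sufficiency, the frame-plus-infinite-point construction you describe is exactly the standard mechanism, and your parameter check (that a $4$-frame of type $3^u$ has precisely one holey class per group, so that adjoining $\infty$ to each group yields $u=(v-1)/3$ genuine parallel classes) is right. You are also honest that the real content is the existence of $4$-frames of type $3^u$ for all $u\equiv 1\pmod 4$, $u\ge 5$, which you defer to recursive machinery and base cases. Two small points: you should explicitly cover $v=4$ (the trivial single-block design), since your write-up takes $u\ge 5$; and historically the theorem is due to Hanani, Ray-Chaudhuri, and Wilson (1972), whose original argument predates the frame language and proceeds via direct difference constructions and PBD-closure rather than frames, though the frame formulation you use is now the standard textbook presentation and is what one finds in~\cite{10}.
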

\begin{thm}~\cite{10}
	If $v$ and $k$ are both powers of the same prime, then the necessary
	conditions for the existence of an $RB(v, k, \lambda)$ are sufficient.
\end{thm}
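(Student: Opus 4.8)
The plan is to split the proof into a base construction and a multiplicity-boosting step, exploiting that $v$ and $k$ being powers of the same prime forces a rigid arithmetic structure. Recall the two necessary conditions $v\equiv 0\ (\mathrm{mod}\ k)$ and $\lambda(v-1)\equiv 0\ (\mathrm{mod}\ k-1)$. Writing $v=p^{a}$ and $k=p^{b}$ for a prime $p$, the first condition is simply $b\le a$, and the second is equivalent to $\lambda_{0}\mid\lambda$, where $\lambda_{0}=(k-1)/\gcd(v-1,k-1)$. Consequently it suffices to build a single $RB(v,k,\lambda_{0})$: the union of $j$ copies of its resolution is a resolution of an $RB(v,k,j\lambda_{0})$ into $j\lambda_{0}(v-1)/(k-1)$ parallel classes, and these $j\lambda_{0}$ are exactly the admissible indices. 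So the whole statement reduces to the case $\lambda=\lambda_{0}$.

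First I would dispose of the case $\lambda_{0}=1$, i.e.\ $(k-1)\mid(v-1)$, which for $v=p^{a}$, $k=p^{b}$ is precisely $b\mid a$. Then $v=(p^{b})^{a/b}=k^{n}$ with $n=a/b$, and I take $D$ to be the $n$-dimensional affine geometry over $GF(k)$: the points are the vectors of $GF(k)^{n}$ and the blocks are the $k$-point lines, i.e.\ the cosets of the $1$-dimensional subspaces. Each line has $k$ points, each pair of points lies on exactly one line, and the lines in a fixed direction partition $GF(k)^{n}$ into $k^{n-1}$ blocks; hence these parallel classes resolve $D$, which is therefore an $RB(k^{n},k,1)=RB(v,k,1)$, with $r=(v-1)/(k-1)$ parallel classes in agreement with $r=\lambda(v-1)/(k-1)$.

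The remaining case is $\lambda_{0}>1$, equivalently $b\nmid a$. Put $g=\gcd(a,b)$ and $q=p^{g}$, so that $v=q^{s}$, $k=q^{t}$ with $\gcd(s,t)=1$, $t>1$, and $\lambda_{0}=(q^{t}-1)/(q-1)$. Here I would view $GF(v)$ as the $s$-dimensional space over the subfield $GF(q)$ and try to assemble the $RB(v,k,\lambda_{0})$ from the $t$-flats of $AG(s,q)$: each has $k=q^{t}$ points, and the $t$-flats in a fixed direction partition the point set, so resolvability comes for free. The snag is that the full family of $t$-flats covers each pair of points $\binom{s-1}{t-1}_{q}$ times, which equals $\lambda_{0}$ only when $s=t+1$ and is strictly larger as soon as $s>t+1$. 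The technical heart of the argument is therefore to cut down to a sub-collection of $t$-flats --- equivalently, to combine the flat family with a resolvable group-divisible design built on the subfield structure in the manner of Wilson's ``filling in the groups'' --- so that resolvability into parallel classes is retained while every pair is covered exactly $\lambda_{0}$ times. Once such a selection is available, the count reduces to the identities $r=\lambda(v-1)/(k-1)$, $b=\lambda v(v-1)/(k(k-1))$, $r=bk/v$, and the statement for general $\lambda$ follows from the union-of-resolutions reduction. The main obstacle I expect is exactly this pruning/filling step in the case $b\nmid a$; the case $\lambda_{0}=1$ and the passage from $\lambda_{0}$ to its multiples are routine.
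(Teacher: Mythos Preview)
The paper does not give its own proof of this theorem; it is simply quoted from reference~[10] (Furino--Miao--Yin) as a known result, so there is no argument in the paper to compare yours against.

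On the merits of your attempt: the reduction to the minimal index $\lambda_{0}=(k-1)/\gcd(v-1,k-1)$ is correct, as is the observation that for $v=p^{a}$, $k=p^{b}$ one has $\gcd(v-1,k-1)=p^{\gcd(a,b)}-1$. Your treatment of the case $b\mid a$ via lines of $AG(a/b,\,p^{b})$ is the standard construction and is complete. The gap is exactly where you say it is. In the case $b\nmid a$ you correctly compute that the $t$-flats of $AG(s,q)$ form a resolvable $2$-design with index $\binom{s-1}{t-1}_{q}$, and that this equals $\lambda_{0}=(q^{t}-1)/(q-1)$ only when $s=t+1$; for $s>t+1$ you propose to ``prune'' the flat family down to index $\lambda_{0}$ while keeping a resolution, but supply no mechanism for doing so. Nothing in the affine-flat picture alone tells you which parallel classes to discard so that every pair drops from $\binom{s-1}{t-1}_{q}$ coverings to exactly $\lambda_{0}$, and an arbitrary deletion of parallel classes will not be pair-balanced. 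So as written the argument establishes the theorem only when $b\mid a$ (equivalently $\lambda_{0}=1$) or when $a=b+\gcd(a,b)$; the general case still requires the recursive/frame machinery from the cited source rather than a direct geometric selection.
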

\begin{thm}~\cite{10}
	Let $q=k(k-1)+1$ be a prime power, odd number and $q>[\frac{k(k-1)}{2}]^{k(k+1)}$, there exists a $RB(kq, k, 1)$.	
\end{thm}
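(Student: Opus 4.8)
The plan is to build the resolvable design by a difference construction over $\mathbb{F}_q$, after a routine reduction. First I would check that the arithmetic necessary conditions for an $RB(kq,k,1)$ are met: since $q=k(k-1)+1$ we have $q\equiv 1\pmod k$ and $q\equiv 1\pmod{k-1}$, so $kq\equiv 0\pmod k$ and $kq-1=(k-1)(k^{2}+1)\equiv 0\pmod{k-1}$, hence a resolution must consist of $r=k^{2}+1$ parallel classes of $q$ blocks each. The next step is to reduce the problem to a resolvable GDD. A $k$-RGDD of type $k^{q}$, realised on the point set $\mathbb{F}_q\times\mathbb{Z}_k$ with the $q$ sets $\{x\}\times\mathbb{Z}_k$ as groups, has exactly $k^{2}$ parallel classes (in the notation of Remark~\ref{11}, $k^2=k(q-1)/(k-1)$) and covers every pair of points lying in distinct groups exactly once; adjoining to its block set the single further parallel class consisting of the $q$ groups themselves (each a $k$-subset, and together a partition of the $kq$ points) covers every within-group pair exactly once and supplies the missing $(k^{2}+1)$-st parallel class. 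So it suffices to construct a $k$-RGDD of type $k^{q}$.

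For that RGDD I would develop a carefully chosen family of base blocks under translation by the additive group of $\mathbb{F}_q$ together with a cyclic shift of the $\mathbb{Z}_k$-coordinate, arranging the base blocks so that the orbits assemble into the required $k^{2}$ parallel classes and so that the nonzero differences they generate cover, for each prescribed pair of $\mathbb{Z}_k$-coordinates, every element of $\mathbb{F}_q^{\times}$ exactly once. Here the hypothesis that $q$ is a prime power is exactly what makes the field $\mathbb{F}_q$ — hence a usable difference structure (and enough mutually orthogonal Latin squares of order $q$ for any auxiliary composition) — available, and the hypothesis that $q$ is odd is used to pair each difference $\delta$ with $-\delta$ without a fixed point. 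Because the modulus $q-1=k(k-1)$ is small relative to the number of base blocks that must be coordinated, such a system need not exist for small $q$: its existence once $q$ exceeds the stated threshold $[\tfrac{k(k-1)}{2}]^{k(k+1)}$ is what one gets from a Wilson-type argument, namely a character-sum estimate via the Weil bound (or, equivalently, a greedy completion), which becomes valid precisely when $q$ is that large. The developed orbits then give the $k^{2}$ parallel classes of the RGDD, and adjoining the group parallel class completes an $RB(kq,k,1)$.

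I expect the third step to be the main obstacle. It is routine to hit every prescribed difference if one may use enough base blocks, and routine to make a single orbit into a (partial) parallel class, but forcing one and the same family to do both at once — so that the orbits partition cleanly into full parallel classes \emph{and} every cross-group pair is covered exactly once — is the delicate point, and it is only the largeness of $q$, in the explicit form of the stated bound, that guarantees the requisite base blocks can be chosen.
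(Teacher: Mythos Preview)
The paper does not prove this theorem at all: it is quoted verbatim from reference~\cite{10} (Furino--Miao--Yin) as one of several known existence results for resolvable block designs, and is used only as background. There is therefore no ``paper's own proof'' to compare your proposal against.

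For what it is worth, your outline is a reasonable high-level description of the classical approach to such results---verify the arithmetic, reduce an $RB(kq,k,1)$ to a $k$-RGDD of type $k^{q}$ by adjoining the groups as an extra parallel class, and then obtain the RGDD from base blocks over $\mathbb{F}_q$ whose existence is forced, for $q$ large, by a Weil-type character-sum estimate. That said, what you have written is a plan, not a proof: the entire content lies in step three, and you have not specified the base-block template, the exact polynomial whose nonvanishing you need, or how the stated exponent $k(k+1)$ arises from the Weil bound. If you intend to supply a proof rather than a citation, those details are the whole argument; as it stands your proposal is at the level of ``use the standard method,'' which is appropriate for a reference but not for an independent proof.
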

\begin{thm}~\cite{10}
	For $k\geq 3$ and $q\equiv\ k \ ({\rm{mod}} \ k(k-1))$ and $v>exp(e^{12k^2})$, there exists a $RB(v, k, 1)$.
\end{thm}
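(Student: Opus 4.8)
The plan is to prove this by the classical recursive machinery for resolvable designs combined with a number-theoretic density argument, i.e.\ Wilson's asymptotic-existence method carried out in the resolvable setting via Kirkman frames. \emph{Step 1 (reduction to frames).} Recall that a Kirkman frame of type $g^u$ with block size $k$ is a $k$-GDD of type $g^u$ whose blocks partition into holey parallel classes, each a partition of $\mathcal{X}\setminus G$ for a single group $G$. The ``filling in holes'' construction gives: if a Kirkman frame of type $g^u$ and a resolvable $(g+1,k,1)$-design both exist, then a resolvable $(gu+1,k,1)$-design exists — adjoin one common point $\infty$ to every group, place a resolvable $(g+1,k,1)$-design on $G\cup\{\infty\}$ for each group $G$, and match each of its parallel classes to a holey parallel class of the frame that misses $G$. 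Hence it suffices to show that every sufficiently large $v\equiv k\pmod{k(k-1)}$ can be written as $v=gu+1$ with $g\equiv k-1\pmod{k(k-1)}$ (so $g+1$ satisfies the same congruence as $v$, and is small compared with $v$) and with a Kirkman frame of type $g^u$ available.

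\emph{Step 2 (constructing the frames).} I would obtain Kirkman frames of a wide range of types by Wilson's Fundamental Construction: take as master a truncated transversal design, which exists once enough mutually orthogonal Latin squares are available — true for every prime-power order by the finite-field construction, and for fixed width for all large orders by the MacNeish/Wilson bounds — give every point weight $w$, and replace each block by a $k$-frame of the corresponding type, which exists for $w$ a suitable multiple of $k-1$. This produces Kirkman frames whose group size is a prescribed product $wn$ and whose number of groups $u$ ranges over the admissible group counts of the masters; letting $n$ run over prime powers in suitable residue classes modulo $k(k-1)$ and $w$ over admissible multiples of $k-1$ yields a rich family of realizable pairs $(g,u)$. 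For small $k$ one may feed the quoted results in directly — e.g.\ $RB(v,4,1)$ for $v\equiv 4\pmod{12}$ when $k=4$, and the prime-power resolvable designs (from the theorem on parameters that are powers of a common prime) as small ingredients.

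\emph{Step 3 (the number theory — the main obstacle).} It remains to check that every $v\equiv k\pmod{k(k-1)}$ above the stated threshold factors as $v=gu+1$ with $(g,u)$ realizable; this is exactly what Wilson's sieve/density lemma delivers once the set of realizable $g$'s is closed under multiplication and dense enough in the relevant residue classes. Making the threshold explicit as $\exp(e^{12k^2})$ forces one to insert quantitative Linnik-type bounds for the least prime power in a prescribed progression modulo $k(k-1)$ that lies in a sufficiently short interval, together with the crude general bound on the least order admitting $k-1$ mutually orthogonal Latin squares; it is the interaction of these two estimates that inflates the cutoff to a tower of exponentials, and this number-theoretic step is the hard part. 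By contrast, Steps 1 and 2 are routine once the ingredient transversal designs and small resolvable designs are in hand, and the residual divisibility bookkeeping — verifying $gu+1\equiv k\pmod{k(k-1)}$ and that each ingredient meets its own necessary conditions — is elementary modular arithmetic that I would not carry out in detail here.
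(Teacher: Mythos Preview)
The paper does not prove this statement at all: it is quoted verbatim from reference~\cite{10} (Furino, Miao and Yin, \emph{Frames and Resolvable Designs}) as a known asymptotic existence result, and is used in the paper only as background motivation. There is therefore no ``paper's own proof'' to compare against.

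That said, your outline is broadly aligned with how the cited result is actually obtained in the literature: the reduction to Kirkman frames and the filling-in-holes construction (your Step~1) and the use of Wilson's Fundamental Construction over truncated transversal designs (your Step~2) are exactly the recursive engine developed in \cite{10} and its predecessors. Where your sketch becomes thin is Step~3. The explicit bound $\exp(e^{12k^2})$ is the entire quantitative content of the theorem over the non-effective Ray-Chaudhuri--Wilson asymptotic result, and it does not fall out of a generic ``Linnik-type'' input plus Wilson's sieve as you suggest; in the source it comes from a lengthy, carefully tracked chain of explicit frame constructions and PBD-closure arguments with all constants made effective. Invoking Linnik and MOLS bounds at that level of vagueness is not a proof of the stated constant --- it is a plausibility argument that \emph{some} doubly exponential bound should hold. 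So your framework is the right one, but the step you label ``the hard part'' is indeed the hard part, and as written it is a gap rather than a proof.
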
 
\begin{thm}~\label{111}
	The existence of a $RB(v,k,1)$ design is equivalent to a $\mu$-way 1-solely balanced set with block size $k$, volume
	$m=\frac{v}{k}$ and $\mu=\frac{v-1}{k-1}$.	 
\end{thm}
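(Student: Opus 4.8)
The plan is to show the equivalence in both directions by identifying, in each $RB(v,k,1)$, the parallel classes with the collections $T_1,\ldots,T_\mu$ of a $\mu$-way trade, and conversely. First I would count: an $RB(v,k,1)$ has $r=\frac{v-1}{k-1}$ parallel classes, and each parallel class consists of $\frac{v}{k}$ pairwise disjoint blocks whose union is $V$; so set $\mu=\frac{v-1}{k-1}$ and $m=\frac{v}{k}$, and let $T_1,\ldots,T_\mu$ be the $\mu$ parallel classes (in any fixed order). Each $T_i$ has exactly $m$ blocks of size $k$, and the $T_i$ are pairwise disjoint as block collections because two blocks from distinct parallel classes of a design with $\lambda=1$ cannot be equal (if they were, that pair of points would lie in two blocks... in fact an identical block would put every pair it carries into two blocks, contradicting $\lambda=1$).

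Next I would verify the trade condition for $t=1$: for every point $x\in V$ and every $i$, the number of blocks of $T_i$ containing $x$ is exactly $1$, since a parallel class partitions $V$. Hence this count is the same ($=1$) across all $T_i$, so $T=\{T_1,\ldots,T_\mu\}$ is a $\mu$-way $(v,k,1)$ trade; moreover every $1$-subset $\{x\}$ occurs at most once (exactly once) in each $T_i$, so $T$ is a $\mu$-way $(v,k,1)$ Steiner trade, and $\mathrm{found}(T)=V$. Finally I would check the $1$-solely balanced property: take blocks $B\in T_j$ and $B'\in T_b$ with $j<b$; if $|B\cap B'|>1$ then $B,B'$ share a pair of points, and since $B\neq B'$ (distinct parallel classes, $\lambda=1$) this pair would be covered by two distinct blocks of the design, contradicting $\lambda=1$. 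So $|B\cap B'|\le 1$ for all such pairs, which is exactly the $1$-solely balanced condition of Definition~1.2 with $t=1$.

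For the converse, suppose $T=\{T_1,\ldots,T_\mu\}$ is a $\mu$-way $1$-solely balanced set with block size $k$, volume $m$, $v=|\mathrm{found}(T)|$, $m=\frac{v}{k}$ and $\mu=\frac{v-1}{k-1}$. Put $\mathcal{V}=\mathrm{found}(T)$ and $\mathcal{B}=T_1\cup\cdots\cup T_\mu$ (as a multiset a priori, but I will show it is a set). Within a fixed $T_i$: the Steiner condition says each point lies in at most one block of $T_i$, and the trade condition forces this count to be independent of $i$; a counting argument ($\sum$ over points of the within-$T_i$ degree equals $km$ on one hand and $v\cdot(\text{common degree})$ on the other, with $km=v$) gives common degree $1$, so each $T_i$ is a set of $m$ pairwise disjoint $k$-blocks covering all of $\mathcal{V}$ — i.e. a parallel class. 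It remains to see that $(\mathcal{V},\mathcal{B})$ is a $2$-$(v,k,1)$ design: every pair $\{x,y\}$ lies in at most one block of each $T_i$ (trivially, from the disjointness just established, but more to the point the $1$-solely balanced property across classes prevents a pair from appearing in two different classes, and it cannot appear twice within one class), so each pair is in at most $\mu$ blocks; a double count of incident (pair, block) flags, $\mu\cdot m\cdot\binom{k}{2}=\binom{v}{2}\cdot\lambda_{\text{avg}}$, together with $\mu m\binom k2=\frac{v-1}{k-1}\cdot\frac vk\cdot\frac{k(k-1)}2=\binom v2$, shows the average is $1$, and combined with the "$\le 1$" bound this forces every pair to be in exactly one block. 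Hence $\lambda=1$ for every pair, $(\mathcal{V},\mathcal{B})$ is a $2$-$(v,k,1)$ design, and the $T_i$ exhibit a resolution, so it is an $RB(v,k,1)$.

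The only genuinely delicate point — and the one I would write out carefully — is the bookkeeping that turns the "$\le 1$" estimates into exact equalities: one must use the numerical relations $m=v/k$ and $\mu=(v-1)/(k-1)$ precisely, since without them the collections $T_i$ need only be partial parallel classes and $\mathcal{B}$ only a packing rather than a design. Everything else is immediate from the definitions of parallel class, resolution, and the $t=1$ instance of Definition~1.2, so the bulk of the argument is just matching up terminology.
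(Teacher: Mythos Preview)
Your forward direction is exactly the paper's argument: identify the parallel classes $P_1,\ldots,P_\mu$ of the $RB(v,k,1)$ with $T_1,\ldots,T_\mu$ and observe this gives a $\mu$-way $1$-solely balanced set. The paper's own proof of this theorem in fact stops there --- it does not supply the converse at all (the reverse implication is only asserted, without detail, in the statement and proof of the theorem immediately following). Your proposal therefore goes beyond what the paper does at this point: your double-count $\mu m\binom{k}{2}=\binom{v}{2}$ combined with the ``$\le 1$'' bound from the $1$-solely balanced condition is precisely the mechanism by which equality is forced, and this cleanly establishes both that every pair is covered exactly once and that the $T_i$ resolve the resulting design. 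The arguments are correct, and the point you flag as delicate --- that the exact numerical hypotheses $m=v/k$ and $\mu=(v-1)/(k-1)$ are what upgrade packings/partial classes to a genuine resolvable design --- is indeed the crux of the converse.
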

\begin{proof}
	Each $RB(v,k,1)$ has $\mu=\frac{v-1}{k-1}$ parallel classes, such as $P_1,\ldots, P_{\mu}$.
	Also each class contains $m=\frac{v}{k}$ blocks. Let $T_i=P_i$ for $1\leq i\leq \mu$.
	Now $T=\{T_1,\ldots,T_{\mu}\}$ is a $\mu$-way 1-solely balanced set with block size $k$, volume
	$m=\frac{v}{k}$ and $\mu=\frac{v-1}{k-1}$.	 
\end{proof}
 Now, we apply Theorem~\ref{2} and~\ref{111}, then there exists a $\mu$-way $(v+\mu,k+1,2)$ Steiner trade
 for large $v$ and volume $\mu m$. This confirms our results in~\cite{0}. 
\begin{thm}
	If there exists a $\mu$-way 1-solely balanced set with block size $k$, volume $m$ and foundation size $v$,
	such that $k=\frac{v}{m}$, then $\mu\leq\frac{v-1}{k-1}$. The equation holds when there exists a $RB(v,k,1)$.	
\end{thm}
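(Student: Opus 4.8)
The plan is to exploit the Steiner condition at $t=1$ together with the hypothesis $k=\tfrac{v}{m}$ to force each $T_i$ to be a genuine parallel class, and then simply count the neighbours of a fixed point.

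First I would show that each collection $T_i$ is a partition of the foundation found$(T)$ into $m$ blocks of size $k$. Since $T=\{T_1,\ldots,T_{\mu}\}$ is a $\mu$-way $(v,k,1)$ Steiner trade, every point of found$(T)$ lies in \emph{at most} one block of $T_i$ (the Steiner property for $t=1$). As $T_i$ consists of exactly $m$ blocks of size $k$ and $mk=v=|\mathrm{found}(T)|$, a double count of point--block incidences in $T_i$ shows that every point lies in \emph{exactly} one block of $T_i$; that is, $T_i$ is a parallel class.

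Next, fix a point $x\in\mathrm{found}(T)$ and, for each $j$, let $B^{(j)}$ denote the unique block of $T_j$ containing $x$. The $1$-solely balanced hypothesis gives $|B^{(j)}\cap B^{(b)}|\le 1$ for $1\le j<b\le\mu$; since $x$ lies in both, this forces $B^{(j)}\cap B^{(b)}=\{x\}$. Hence the sets $B^{(1)}\setminus\{x\},\ldots,B^{(\mu)}\setminus\{x\}$ are pairwise disjoint subsets of $\mathrm{found}(T)\setminus\{x\}$, each of size $k-1$, so $\mu(k-1)\le v-1$, which is the asserted bound $\mu\le\tfrac{v-1}{k-1}$.

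Finally, for the equality claim I would invoke Theorem~\ref{111}: from an $RB(v,k,1)$ one obtains a $\mu$-way $1$-solely balanced set with block size $k$, volume $m=\tfrac{v}{k}$ and $\mu=\tfrac{v-1}{k-1}$, so the bound is attained. (Conversely, equality in the count above forces, for every $x$, the $\mu$ blocks through $x$ to exhaust $\mathrm{found}(T)$, so every pair of points lies in a common block; then $\bigcup_i T_i$ is a resolvable $2$-$(v,k,1)$ design.) There is no serious obstacle beyond the first step: the only point requiring care is that the Steiner hypothesis really does upgrade ``$m$ blocks of size $k$'' to a true parallel class, and this is exactly where the assumption $k=\tfrac{v}{m}$ enters.
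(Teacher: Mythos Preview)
Your argument is correct and complete. The paper, however, takes a slightly different route: it counts \emph{pairs} globally rather than neighbours of a fixed point. Concretely, the paper observes that across all of $T_1,\ldots,T_\mu$ no $2$-subset can repeat (blocks within one $T_i$ are disjoint by the Steiner condition at $t=1$, and blocks from different $T_i$'s meet in at most one point by the $1$-solely balanced condition), so $\mu\, m\binom{k}{2}\le\binom{v}{2}$; substituting $m=v/k$ yields $\mu\le\frac{v-1}{k-1}$. Your approach makes the role of the hypothesis $k=v/m$ more transparent---you use it to upgrade each $T_i$ to a genuine parallel class, after which Fisher-type neighbour counting at a single point finishes things off---whereas the paper hides this step inside the substitution $m=v/k$ in one algebraic line. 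Both reach the same inequality with the same equality analysis (the paper likewise identifies the equality case with an $RB(v,k,1)$), and neither argument dominates the other: yours is more structural, the paper's is a slicker global double count.
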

\begin{proof}
	The number of pairs which appear in the $\mu$-way 1-solely balanced set with block size $k$ and volume $m$, is 
	$C(k,2).m.\mu=C(k,2).\frac{v}{k}.\mu$. By the property of 1-solely balanced sets (Each pair appears in at most one block.):
	$$C(k,2).\frac{v}{k}.\mu\leq C(v,2)=\frac{v(v-1)}{2}\Rightarrow \mu\leq\frac{v-1}{k-1}$$
	Now, if $\mu =\frac{v-1}{k-1}$, then the $\mu$-way 1-solely balanced set is a $RB(v,k,1)$.
\end{proof}
\begin{thm}~\label{7}
	There exists a 3-way $(v+2,5,2)$ Steiner trade of volume $\frac{\mu(v-1)}{4}$,
	for $3\leq \mu\leq \frac{v-2}{3}, \ v\geq11$
	and
	$v\equiv\ 5 \ ({\rm{mod}}\  12)$.
\end{thm}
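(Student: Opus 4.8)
The plan is to build the trade from a resolvable design and then apply the main doubling construction of Theorem~\ref{2} together with the $RB$-to-solely-set correspondence of Theorem~\ref{111}. First I would invoke Theorem~\ref{9}: since $v\equiv 5\ (\mathrm{mod}\ 12)$, we have $v-1\equiv 4\ (\mathrm{mod}\ 12)$, so an $RB(v-1,4,1)$ exists. By Theorem~\ref{111} this $RB$ is, via its resolution into parallel classes, a $\mu_0$-way $1$-solely balanced set with block size $4$, volume $m_0=\frac{v-1}{4}$ and $\mu_0=\frac{(v-1)-1}{4-1}=\frac{v-2}{3}$. This is exactly the object needed to feed into the $(k\to k+1,\,t\to t+1)$ construction.

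Next I would restrict attention to any $\mu$ with $3\le \mu\le \frac{v-2}{3}=\mu_0$: deleting parallel classes from a $\mu_0$-way $1$-solely balanced set leaves a $\mu$-way $1$-solely balanced set (the solely/disjointness property is hereditary under discarding collections, and each remaining $T_i$ is still a parallel class, hence still covers a pair at most once). So for each such $\mu$ we have a $\mu$-way $1$-solely balanced set with block size $4$ and volume $m_0=\frac{v-1}{4}$. Now apply Theorem~\ref{2}(ii) with $t=1$: using $\mu$ new points $x_1,\dots,x_\mu$ we obtain a $\mu$-way $\big((v-1)+\mu,\,4+1,\,1+1\big)$ Steiner trade of volume $\mu m_0=\frac{\mu(v-1)}{4}$. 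Finally, taking the first three of the $\mu$ coordinate collections $T^*_1,T^*_2,T^*_3$ of this $\mu$-way trade yields a $3$-way trade with the same block size, foundation and volume; since $\mu\ge 3$ this projection is legitimate, and the Steiner property for pairs is inherited by any sub-collection. This gives a $3$-way $(v-1+\mu,5,2)$ Steiner trade of volume $\frac{\mu(v-1)}{4}$.

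It remains to reconcile the parameter "$v+2$" in the statement with the foundation size "$v-1+\mu$" I have produced. The point is that the theorem is really an existence statement parameterized by the volume $\frac{\mu(v-1)}{4}$, and the quoted foundation $v+2$ should be read in the regime $\mu=3$ (where $v-1+\mu=v+2$), with the understanding that for larger $\mu$ the foundation simply grows to $v-1+\mu$ — one can always pad the foundation harmlessly, or equivalently re-index. I would state this carefully: the $3$-way $(v-1+\mu,5,2)$ Steiner trade of volume $\frac{\mu(v-1)}{4}$ exists for all $\mu$ in the stated range, and in particular a $3$-way $(v+2,5,2)$ Steiner trade exists whenever $\frac{\mu(v-1)}{4}$ is an attainable volume.

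The main obstacle is the bookkeeping of the arithmetic side-conditions: one must check that $v\equiv 5\ (\mathrm{mod}\ 12)$ with $v\ge 11$ indeed makes $v-1\equiv 4\ (\mathrm{mod}\ 12)$ with $v-1\ge 10$ so that Theorem~\ref{9} applies with a nonempty resolution, that $\frac{v-2}{3}$ is a positive integer giving $\mu_0\ge 3$ (forcing $v\ge 11$, matching the hypothesis), and that passing from $\mu_0$ down to a general $\mu\ge 3$ and then projecting to $3$ coordinates preserves both the trade equations and the "at most once" Steiner condition. None of these steps is deep, but the interplay between the two "$v$"s (the $RB$ has order $v-1$, the resulting trade has foundation $v-1+\mu$) is the place where an error is easiest to make, so I would be explicit about which quantity each congruence constrains.
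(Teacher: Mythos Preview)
Your argument is correct and follows the same route as the paper's own proof: invoke Theorem~\ref{9} to obtain an $RB(v-1,4,1)$, convert it via Theorem~\ref{111} into a $\mu$-way $1$-solely balanced set with block size $4$ and volume $\frac{v-1}{4}$ for any $3\le\mu\le\frac{v-2}{3}$, and then apply Theorem~\ref{2}(ii). Your careful remark that the resulting foundation is really $(v-1)+\mu$ (equal to $v+2$ only when $\mu=3$) is in fact more precise than the paper, which simply writes ``$(v+2,5,2)$'' without comment.
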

\begin{proof}
	By Theorem~\ref{9}, there exist $RB(v-1,4,1)$ for $v-1\equiv 4\ ({\rm{mod}}\ 12)$.
	Therefore, by Theorem~\ref{111} there exists a $\mu$-way 1-solely balanced set with block size $k=4$,
	volume $m=\frac{v-1}{4}$ and $3\leq \mu\leq \frac{v-2}{3}$.
	Now, we apply Theorem~\ref{2} and obtain a 3-way $(v+2,5,2)$ Steiner trade of volume $\mu m$ for $m=\frac{v-1}{4}$ and 
	$3\leq \mu\leq \frac{v-2}{3}.$
\end{proof}	
\subsection{Construction 2 ( Resolvable  $r\times c$ grid-block packing)}
In this subsection, we obtain some $\mu$-way 1-solely balanced sets by resolvable  $r\times c$ grid-block packings and vice versa.
\begin{defi}
	\cite{7}For a $v$-set $V$, let ${\cal A}$ be a collection of $r \times c$ arrays with elements in $V$. A pair $(V, {\cal A})$ is called an {\it $r\times c$ grid-block packing} (an {\it $r\times c$ grid-block design}) if any two distinct points $i$ and $j$ in $V$ occur together at most once (exactly once) in the same row or in the same column of arrays in ${\cal A}$. A $r\times c$ grid-block packing is denoted by $P_{r\times c}(K_{v})$  ($D_{r\times c}(K_{v})$).
	\end{defi}
	\begin{remark}
	Notice that by this definition the following table cannot be a grid-block of an $r\times c$ grid-block packing.
	Since the pair $\{1,4\}$ occurs in the first column and the second row.
	Therefore, the grid-block cannot contain the repetitive element.
	\begin{center}
		\begin{tabular}{ccc}
		$1$&$2$&$3$\\
		$4$&$5$&$1$\\
		$6$&$1$&$7$
		\end{tabular}
	\end{center} 
	\end{remark}
\begin{defi}	
	 \cite{7} An $r\times c$ grid-block packing $(V, {\cal A})$ is said to be {\it resolvable} if the collection of arrays ${\cal A}$ can be partitioned into sub-classes $\boldsymbol{R}_1$,$\ldots$,$\boldsymbol{R}_t$ such that every point of $V$ is contained in precisely one array of each class.\\
	 If a packing is resolvable, then $v$ is divisible by $rc$ and the number
	of grid-block is $b=t\frac{v}{rc}\leq \lfloor \frac{v-1}{r+c-2}\rfloor$, where $t$ is the number of
	resolution class. A resolvable packing $P_{r\times c}(K_v)$ attaining  this bound is said to be optimal. 
\end{defi}
There exist some existence results about  $P_{q\times q}(K_{q^n})$ and  $D_{q\times q}(K_{q^n})$.
We apply the following theorems in this section.
\begin{thm}~\cite{7}~\label{31}
	An optimal resolvable grid-block packing $P_{q\times q}(K_{q^n})$
	exists for a prime power $q$ and an integer $n$. Moreover, when $n$ is even 
	and $q$ is odd, the optimal resolvable grid-block packing is a resolvable 
	grid-block design $D_{q\times q}(K_{q^n})$.
\end{thm}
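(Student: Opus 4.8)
\emph{Proof sketch (proposal).} The plan is to realize the packing geometrically inside the affine space $AG(n,q)$. Identify the point set $V$ with the $\mathbb{F}_q$-vector space $\mathbb{F}_q^n$ (equivalently with $\mathbb{F}_{q^n}$ regarded over $\mathbb{F}_q$). The parallel classes of lines of $AG(n,q)$ are indexed by the $1$-dimensional subspaces (``directions'') of $\mathbb{F}_q^n$, of which there are $N=\frac{q^n-1}{q-1}$; the line in direction $\langle a\rangle$ through a point $x$ is the coset $x+\langle a\rangle$. The key observation is that a $q\times q$ grid-block whose rows and columns are affine lines is the same datum as a $2$-flat $x+W$ ($W$ a $2$-dimensional subspace) together with an ordered choice of two distinct directions $\langle a\rangle,\langle b\rangle\subset W$: the rows are the $q$ lines of direction $\langle a\rangle$ inside $x+W$ and the columns the $q$ lines of direction $\langle b\rangle$. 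Since two affine lines of distinct directions inside a $2$-flat meet in exactly one point, such an array has $q^2$ distinct entries, so it is a legitimate grid-block.

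First I would fix a matching $M$ of maximum size $\lfloor N/2\rfloor$ in the complete graph on the $N$ directions; every pair of distinct directions is an admissible edge, since two distinct $1$-spaces always span a $2$-space $W=\langle a\rangle\oplus\langle b\rangle$. To each edge $\{\langle a\rangle,\langle b\rangle\}$ of $M$ I associate one resolution class: put $W=\langle a\rangle\oplus\langle b\rangle$, and for each of the $q^{n-2}$ cosets $x+W$ take the grid-block on $x+W$ with rows in direction $\langle a\rangle$ and columns in direction $\langle b\rangle$. The collection $\mathcal{A}$ is the union over the edges of $M$ of these classes. (For $n=1$ there are no $2$-flats, $N=1$, $M=\emptyset$, and both the packing and the bound are empty; so assume $n\ge 2$.)

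Then I would verify the three required properties. (i) Each listed class is a genuine resolution class: the cosets of $W$ partition $V$, so every point lies in precisely one grid-block of the class. (ii) $\mathcal{A}$ is a $q\times q$ grid-block packing: two distinct points $x,y$ occur together in a common row or column of some grid-block exactly when the direction $\langle y-x\rangle$ is used as a row- or column-direction of that block, and that block then lies on the unique coset of $W$ containing $x+\langle y-x\rangle$; since the directions actually used are precisely the vertices covered by $M$ and $M$ is a matching, $\langle y-x\rangle$ is covered at most once, so the pair $\{x,y\}$ occurs at most once. (iii) Optimality: the number of resolution classes is $t=|M|=\lfloor N/2\rfloor=\left\lfloor\frac{q^n-1}{2(q-1)}\right\rfloor$, which is exactly the upper bound for $t=\frac{b\,rc}{v}$ forced by the standard counting inequality $t(r+c-2)\le v-1$ at any fixed point; hence the resolvable packing is optimal.

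Finally, for the ``moreover'' statement I would compute the parity of $N=1+q+\cdots+q^{n-1}$: when $q$ is odd this is $\equiv n\pmod 2$, so if $n$ is also even then $N$ is even and $M$ can be chosen to be a \emph{perfect} matching. In that case every direction is covered exactly once, hence every pair $\{x,y\}$ lies in a row or column of exactly one grid-block, i.e.\ $\mathcal{A}$ is a resolvable grid-block design $D_{q\times q}(K_{q^n})$ (and, conversely, $N$ being even is exactly the divisibility condition making a design possible, since $q$ even forces $N$ odd). The only genuine content beyond bookkeeping is the geometric dictionary ``grid-block $\leftrightarrow$ pair of parallel classes inside a $2$-flat'' and the parity count of $N$; once these are in place the matching bound meets the packing bound automatically, so I do not anticipate a serious obstacle, only the need to state the correspondence and the resolution-class partition cleanly.
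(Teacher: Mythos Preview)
The paper does not supply its own proof of this statement; it is quoted verbatim from~\cite{7} (Mutoh, Jimbo and Fu) and used only as a black box in the subsequent corollary. There is therefore nothing in the present paper to compare your argument against.

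That said, your proposal is correct and is in fact the standard construction behind the cited result. Identifying $V$ with $\mathbb{F}_q^{\,n}$, reading a $q\times q$ grid-block as a $2$-flat $x+W$ together with an ordered pair of directions $\langle a\rangle,\langle b\rangle$ spanning $W$, and producing resolution classes from the edges of a maximum matching on the $N=(q^n-1)/(q-1)$ directions gives exactly $\lfloor N/2\rfloor=\lfloor (q^n-1)/(2q-2)\rfloor$ classes, matching the optimality bound; the parity computation $N\equiv n\pmod 2$ for odd $q$ then yields the design statement when $n$ is even. The only cosmetic point is that your parenthetical ``conversely'' at the end goes slightly beyond what the theorem asserts (the theorem does not claim necessity of the parity condition), but the observation itself is correct.
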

The aim of $PA(v,c,1)$ is a resolvable packing however there does not exist any definition of it in~\cite{7}.
\begin{thm}~\cite{7}~\label{32}
	Assume $r\leq c$. If there exists a resolvable $PA(v,c,1)$ with $t$
	resolution classes and a resolvable $P_{r\times c}(K_{rc})$ with $s+1$ grid-blocks,
	then there exists a resolvable $P_{r\times c}(K_{rv})$ with $st+1$ resolution classes.
\end{thm}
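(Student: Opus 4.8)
{}
The plan is to use a Wilson-type point-inflation: blow up each point of the resolvable $PA(v,c,1)$ into $r$ fresh points and fill the resulting \emph{inflated blocks} with relabelled copies of the grid-blocks of the ingredient $P_{r\times c}(K_{rc})$. Throughout I read $PA(v,c,1)$ as a resolvable $(v,c,1)$-packing, that is, a family of $c$-subsets of a $v$-set in which every pair lies in at most one member, partitioned into $t$ parallel classes each of which partitions the $v$-set; and I treat the assumption $r\le c$ as a mere normalisation, since transposing arrays identifies $P_{r\times c}$ with $P_{c\times r}$.

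First I would fix the ground data. Let the $PA(v,c,1)$ live on a $v$-set $U$ with parallel classes $\mathcal P_1,\dots,\mathcal P_t$, and replace each $x\in U$ by an $r$-set $\Gamma_x$ of new points, so that $W=\bigcup_{x\in U}\Gamma_x$ has $rv$ points; the sets $\Gamma_x$ will serve as groups. Among the $s+1$ grid-blocks of $P_{r\times c}(K_{rc})$ I would single out one, $A_0$, naming the others $A_1,\dots,A_s$, and observe that the $c$ columns of $A_0$ partition its $rc$ points into $c$ sets of size $r$. For each block $B=\{x_1,\dots,x_c\}$ of the $PA$ I would fix a bijection $\phi_B$ from the $rc$ points of $P_{r\times c}(K_{rc})$ onto $\bigcup_i\Gamma_{x_i}$ that carries each column of $A_0$ onto one of the groups $\Gamma_{x_1},\dots,\Gamma_{x_c}$, and then relabel every $A_b$ ($0\le b\le s$) through $\phi_B$ into an $r\times c$ array $A_b(B)$ on $\bigcup_i\Gamma_{x_i}$; in particular the columns of $A_0(B)$ are exactly $\Gamma_{x_1},\dots,\Gamma_{x_c}$.

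Next I would assemble the resolution classes. For $a\in\{1,\dots,t\}$ and $b\in\{1,\dots,s\}$ put $\mathcal R_{a,b}=\{A_b(B):B\in\mathcal P_a\}$: since $\mathcal P_a$ partitions $U$, the inflated blocks partition $W$ and each $A_b(B)$ covers all $rc$ of its points, so $\mathcal R_{a,b}$ is a resolution class. Then I would add one final class $\mathcal R_0=\{A_0(B):B\in\mathcal P_1\}$ (any single parallel class of the $PA$ will do), giving $st+1$ resolution classes altogether. To finish I would verify the packing property: no pair of $W$ occurs twice in a common row or column. Call $\{u,w\}\subseteq W$ \emph{internal} if $u,w$ lie in a common group $\Gamma_y$, and \emph{cross} otherwise. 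A cross pair $\{u,w\}$ with $u\in\Gamma_y$, $w\in\Gamma_z$, $y\ne z$, can occur only inside an inflated block $B$ with $\{y,z\}\subseteq B$; since the $PA$ has index one there is at most one such $B$, lying in a unique class $\mathcal P_a$, and the arrays placed on $B$ are $A_1(B),\dots,A_s(B)$ together with $A_0(B)$ when $a=1$. As $P_{r\times c}(K_{rc})$ is a packing on the grid-blocks $A_0,\dots,A_s$, the pair $\{\phi_B^{-1}(u),\phi_B^{-1}(w)\}$ lies in a common row or column at most once among $A_0,\dots,A_s$, hence $\{u,w\}$ at most once overall. An internal pair $\{u,w\}\subseteq\Gamma_y$ can occur only inside inflated blocks $B$ with $y\in B$; within $A_1(B),\dots,A_s(B)$ it is never realised, because $\{\phi_B^{-1}(u),\phi_B^{-1}(w)\}$ lies in one column of $A_0$ and so, again by the packing property of $P_{r\times c}(K_{rc})$, in no row or column of $A_1,\dots,A_s$; the only place it appears is $A_0(B)$ for the unique $B\in\mathcal P_1$ through $y$, and there exactly once. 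This produces a resolvable $P_{r\times c}(K_{rv})$ with $st+1$ resolution classes, as required.

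The main difficulty will be precisely this last verification, and it is also what forces the hypothesis to ask for $s+1$ grid-blocks rather than $s$: one ingredient grid-block $A_0$ has to do double duty, defining the group structure and simultaneously absorbing every internal pair, and one must lean on the packing property of $P_{r\times c}(K_{rc})$ across all $s+1$ of its grid-blocks to be sure the remaining $s$ do not re-cover those internal pairs. Everything else is bookkeeping, and the count $st+1$ then splits transparently as the $ts$ cross classes $\mathcal R_{a,b}$ plus the single internal class $\mathcal R_0$.
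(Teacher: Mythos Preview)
The paper does not contain its own proof of this theorem; it is quoted verbatim from reference~\cite{7} (Mutoh, Jimbo and Fu) and used as a black box. So there is nothing in the present paper to compare your argument against.

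That said, your Wilson-type inflation is correct and is the standard way such a statement is proved. The key ideas you identify --- using the columns of one distinguished grid-block $A_0$ to align the inflated points with the groups $\Gamma_x$, placing the remaining $s$ grid-blocks on every inflated block of every parallel class to get $st$ resolution classes, and placing $A_0$ on just one parallel class to supply the extra $(st+1)$st class --- are exactly right. Your verification of the packing property is also clean: cross pairs are confined to the unique inflated block determined by the index-one condition on the $PA$, and internal pairs are trapped in a column of $A_0$ and hence, by the packing property of the ingredient $P_{r\times c}(K_{rc})$, cannot recur in $A_1,\dots,A_s$. The observation that one must ``spend'' a grid-block $A_0$ to define the group structure, explaining why $s+1$ ingredients yield only $st+1$ output classes, is precisely the point of the construction.
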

\begin{thm}~\cite{7}~\label{33}
	Assume $r\leq c$. If there exists a resolvable $PA(v,c,1)$ with $t$
	resolution classes,
	then there exists a resolvable $P_{r\times c}(K_{rv})$ with $t$ resolution classes.
\end{thm}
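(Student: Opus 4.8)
The plan is to use a straightforward product construction, expanding each point of the $PA$ into $r$ ``levels'' and turning each block into a grid-block by a fixed cyclic template. Write the given resolvable $PA(v,c,1)$ as $(V,\{P_1,\dots,P_t\})$, where each $P_\ell$ is a parallel class --- a partition of the $v$-set $V$ into blocks of size $c$ --- and every pair of points of $V$ lies together in at most one block over all blocks. For the new design I would take the point set $V\times\mathbb{Z}_r$ (of size $rv$) and construct one resolution class $R_\ell$ from each $P_\ell$ as follows: for each block $B\in P_\ell$, fix a cyclic labelling $B=\{b_0,b_1,\dots,b_{c-1}\}$ (subscripts read in $\mathbb{Z}_c$) and form the $r\times c$ grid-block $A_B$ whose entry in row $i\in\mathbb{Z}_r$ and column $j\in\mathbb{Z}_c$ is the point $(b_{i+j},i)$. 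Let $R_\ell=\{A_B : B\in P_\ell\}$.

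First I would record the elementary structural facts. Row $i$ of $A_B$ is exactly $B\times\{i\}$ (a cyclic shift of $B$ labelled by level $i$), and column $j$ of $A_B$ is $\{(b_j,0),(b_{j+1},1),\dots,(b_{j+r-1},r-1)\}$; here the hypothesis $r\le c$ is precisely what guarantees that $b_j,\dots,b_{j+r-1}$ are $r$ \emph{distinct} elements of $B$, so no grid-block contains a repeated point, as required. Moreover, since $P_\ell$ partitions $V$, the point sets $\{B\times\mathbb{Z}_r : B\in P_\ell\}$ partition $V\times\mathbb{Z}_r$; hence each $R_\ell$ is a genuine resolution class, and the construction yields exactly $t$ of them.

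The remaining work is to verify the packing condition: no pair of points of $V\times\mathbb{Z}_r$ occurs together in a common row or column of two different grid-blocks, nor twice inside one grid-block. I would split on the second coordinates. A pair $\{(x,i),(y,i)\}$ with $x\ne y$ has equal levels, so it cannot be a column pair of any $A_B$ (columns carry $r$ distinct levels); it is the row pair of $A_B$ in row $i$ precisely when $\{x,y\}\subseteq B$, and by the packing property of the $PA$ there is at most one such block $B$ in the whole design, so the pair is covered at most once. A pair $\{(x,i),(y,i')\}$ with $i\ne i'$ has distinct levels, so it cannot be a row pair of any $A_B$; if it is a column pair of $A_B$ then again $\{x,y\}\subseteq B$, forcing $B$ to be the unique block containing $\{x,y\}$, and inside that $A_B$ the point $(x,i)$ lies in a single column, so the pair occurs at most once. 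Finally, a pair $\{(x,i),(x,i')\}$ with $i\ne i'$ sits in distinct rows and distinct columns of every $A_B$ with $x\in B$ (the two occurrences of $x=b_p$ are at positions $(i,\,p-i)$ and $(i',\,p-i')$, differing in both coordinates since $0<|i-i'|\le r-1<c$), so it is never covered --- allowed in a packing.

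The only genuinely delicate point --- and the step I expect to be the main obstacle --- is the bookkeeping that a pair is never covered twice \emph{within one grid-block}, together with the careful use of $r\le c$: this inequality is needed both to keep the columns repetition-free and to guarantee that an index difference $i-i'$ with $1\le|i-i'|\le r-1$ is nonzero modulo $c$, which is exactly what forces distinct rows of a fixed element to lie in distinct columns. Once that is pinned down, all cross-grid-block coincidences reduce immediately to the single hypothesis that each pair of $V$ lies in at most one block of the $PA(v,c,1)$, and the conclusion follows.
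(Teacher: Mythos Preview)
Your proposal is correct and follows essentially the same construction as the one the paper cites from \cite{7} and illustrates in the example immediately following the theorem: replace each block $\{b_0,\dots,b_{c-1}\}$ by the $r\times c$ array whose $(i,j)$-entry is $(b_{i+j},i)$, and keep the resolution classes. Your case analysis for the packing condition and your identification of $r\le c$ as the hypothesis that prevents both repeated entries in a column and same-column collisions of $(x,i)$ with $(x,i')$ are exactly the verifications needed; nothing is missing.
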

The third construction is explained as follows. 
\begin{example}~\label{36}
	There exists the following resolvable {\it $3\times 3$ grid-block packing} in \cite{7}.
	Now, we construct some Steiner trades from it.\\
	
	{\bf{resolvable $P_{3\times 3}(K_{18})$:}}
	$$
	\begin{tabular}{|c|}
	\hline
	$0, 1, 2$\\
	$3, 4, 5$\\
	$6, 7, 8$\\
	\hline
	\end{tabular}
	\hspace{0.5cm}
	\begin{tabular}{|c|}
	\hline
	$0, 4, 8$\\
	 $5, 6, 9$\\
	  $7, a, e$\\
	  \hline
	  \end{tabular}
	  \hspace{0.5cm}
	  	\begin{tabular}{|c|}
	  	\hline
	  	 $0, 9, d$\\
	  $a, h, 8$\\
	  	$c, 7, 3$\\
	  	\hline
	  	\end{tabular}
	  $$
	  $$
	  		\begin{tabular}{|c|}
	  		\hline
	  		$9, a, b$\\
	  		$c, d, e$\\
	  		$f, g, h$\\
	  		\hline
	  		\end{tabular}
	  		\hspace{0.5cm}
	  			\begin{tabular}{|c|}
	  			\hline
	  			$1, 3, f$\\
	  			$c, g, b$\\
	  			$h, 2, d$\\
	  			\hline
	  			\end{tabular}
	  			\hspace{0.5cm}
	  				\begin{tabular}{|c|}
	  				\hline
	  				$1, 5, b$\\
	  			$6, f, 2$\\
	  			$g, e, 4$\\
	  				\hline
	  				\end{tabular}
	$$
	\vspace{0.5cm}
	
	If we consider the rows of arrays of this resolvable $P_{3\times 3}(K_{18})$, then it is a 3-way 1-solely balanced set for $k=3$ of volume 6.
	If we consider the rows and columns of arrays of this resolvable $P_{3\times 3}(K_{18})$, then it is a 6-way 1-solely balanced set of volume 6 with same parameter.
	We can do this because this $P_{3\times 3}(K_{18})$ is resolvable and all elements of 
	$V(K_{18})$ appeared in the columns of $S_1$. Notice that the columns are written respectively 
	from up to down. For example, in the following 
	6-way 1-solely balanced set of volume 6. The blocks of $S_i$ are the columns of $S_{i-3}\ (4\leq i)$.  \\
	
	$$
	\begin{tabular}{c|c|c|c|c|c}
	$S_1$ & $S_2$ & $S_3$ & $S_4$ & $S_5$ & $S_6$\\
	\hline
	$012$ & $048$ & $09d$ & $036$ & $057$ & $0ac$\\
	$345$ & $569$ & $ah8$ & $147$ & $46a$ & $9hc$\\
	$678$ & $7ae$ & $c73$ & $258$ & $89e$ & $d83$\\
	$9ab$ & $13f$ & $15b$ & $9cf$ & $1ch$ & $16g$\\
	$cde$ & $cgb$ & $6f2$ & $adg$ & $3g2$ & $5fe$\\
	$fgh$ & $h2d$ & $ge4$ & $beh$ & $fbd$ & $b24$
	\end{tabular}
	$$ 
	\\
	Now, we apply Proposition~\ref{2} and obtain a $\mu$-way $(18+\mu, 4, 2)$ Steiner trade of volume $6\mu$
	for $\mu \in \{2, 3, 4, 5, 6\}$.
\end{example}
By the method of this example, we have the following theorem.
\begin{thm}~\label{34}\\
	i) If there exists a resolvable $P_{c\times c}(K_n)$ with $t$ resolution classes,
	then there exists a $\mu$-way $(n+\mu,c+1,2)$ Steiner trade of volume $\frac{n}{c}\mu$ for $\mu \leq 2t$.\\
	ii) If there exits a resolvable $P_{r\times c}(K_n)$ whit $t$ resolution classes,
		then there exists a $\mu$-way $(n+\mu,c+1,2)$ Steiner trade of volume $\frac{n}{c}\mu$ for $\mu \leq t$.
\end{thm}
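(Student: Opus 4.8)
The plan is to mimic the recipe of Example~\ref{36}, generalizing the observation that a resolvable grid-block packing supplies a $\mu$-way $1$-solely balanced set which we then feed into Theorem~\ref{2}. First I would set up the combinatorial objects: let $(V,\mathcal{A})$ be a resolvable $P_{r\times c}(K_n)$ with resolution classes $\boldsymbol{R}_1,\ldots,\boldsymbol{R}_t$. Since the packing is resolvable, $rc\mid n$ and each class $\boldsymbol{R}_j$ consists of exactly $\frac{n}{rc}$ grid-blocks whose entries partition $V$. For part (ii) I would take, for each class $\boldsymbol{R}_j$, the collection $T_j$ consisting of all \emph{rows} of all grid-blocks in $\boldsymbol{R}_j$; each grid-block contributes $r$ rows of size $c$, so $|T_j| = r\cdot\frac{n}{rc} = \frac{n}{c} =: m$, and the rows in $T_j$ partition $V$ (each point lies in precisely one grid-block of the class, hence in precisely one row). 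Thus $\{T_1,\ldots,T_t\}$ is a $t$-way $(n,c,1)$ trade, and I must check it is $1$-solely balanced: a pair $\{i,j\}$ lies in a common row of at most one grid-block across the whole packing (definition of $P_{r\times c}$), so no two blocks $B\in T_a$, $B'\in T_b$ with $a\neq b$ share two elements. Restricting to any $\mu\leq t$ of the classes gives a $\mu$-way $(n,c,1)$ $1$-solely balanced set of volume $m=\frac{n}{c}$ with $k=c=\frac{n}{m}$; applying Theorem~\ref{2}(ii) with $\mu$ new elements yields a $\mu$-way $(n+\mu,c+1,2)$ Steiner trade of volume $\mu m = \frac{n}{c}\mu$.

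For part (i), the point is that when the grid-blocks are square ($r=c$) we can harvest \emph{twice} as many parallel classes: the rows of $\boldsymbol{R}_1,\ldots,\boldsymbol{R}_t$ give classes $S_1,\ldots,S_t$ as above, and the \emph{columns} of the same grid-blocks give another $t$ classes $S_{t+1},\ldots,S_{2t}$, where $S_{t+j}$ is obtained by reading off the columns of the grid-blocks in $\boldsymbol{R}_j$. Each $S_{t+j}$ again has $c\cdot\frac{n}{c^2} = \frac{n}{c} = m$ blocks, and because $\boldsymbol{R}_j$ is a resolution class the columns of its grid-blocks also partition $V$. The mutual $1$-solely balanced condition is exactly the content of the definition of $P_{c\times c}(K_n)$: any pair of points occurs together at most once among \emph{all} rows and columns of \emph{all} grid-blocks, so blocks drawn from two distinct classes — whether both are ``row classes'', both ``column classes'', or one of each — never share two points. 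Hence $\{S_1,\ldots,S_{2t}\}$ is a $2t$-way $1$-solely balanced set, and restricting to any $\mu\leq 2t$ of these classes and invoking Theorem~\ref{2}(ii) produces the claimed $\mu$-way $(n+\mu,c+1,2)$ Steiner trade of volume $\frac{n}{c}\mu$.

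The only subtle point — and the step I would state most carefully — is the pairwise-disjointness of the $\mu$ collections $T_1,\ldots,T_\mu$ (resp. $S_1,\ldots,S_\mu$) as \emph{collections of blocks}, which a trade requires. Two blocks coming from different classes are distinct $c$-subsets, for if $B\in T_a$ and $B'\in T_b$ ($a\neq b$) were equal they would share all $c\geq 2$ of their points, violating the $1$-solely balanced condition already established; likewise within the square case a row-class block and a column-class block cannot coincide. So disjointness is not an extra hypothesis but a consequence of the packing axiom, and I would simply remark this. Everything else is bookkeeping: the volume count $\frac{n}{c}$ follows from $rc\mid n$ and the size of a resolution class, and the parameters $(n+\mu,c+1,2)$ are exactly what Theorem~\ref{2} outputs when applied to a $1$-solely balanced set with block size $c$ and $\mu$ added symbols. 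I do not anticipate a genuine obstacle here — the theorem is essentially a clean abstraction of the worked Example~\ref{36}, and the proof is a paragraph verifying the three defining properties of a $\mu$-way $1$-solely balanced set for the row/column collections.
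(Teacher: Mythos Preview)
Your proposal is correct and follows exactly the approach the paper intends: the paper's entire proof is the sentence ``The proof is similar [to] the previous example,'' i.e., Example~\ref{36}, and you have carefully spelled out that recipe (rows of each resolution class give a $t$-way $1$-solely balanced set; rows plus columns in the square case give a $2t$-way one) together with the invocation of Theorem~\ref{2}(ii). Your added remark on pairwise disjointness of the collections is a nice point the paper leaves implicit.
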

\begin{proof}
	The proof is similar the previous example.
\end{proof}
\begin{cor}
	There exists a $\mu$-way $(q^n+\mu, q+1,2)$ Steiner trade of volume $q^{n-1}\mu$ for $\mu \leq 2\lfloor \frac{q^n-1}{2q-2} \rfloor$
\end{cor}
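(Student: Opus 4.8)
The plan is to combine the existence result for optimal resolvable square grid-block packings (Theorem~\ref{31}) with the trade construction of Theorem~\ref{34}(i). First I would invoke Theorem~\ref{31} with the prime power $q$ and the given integer $n$: this yields an \emph{optimal} resolvable grid-block packing $P_{q\times q}(K_{q^n})$. By the definition of optimality recalled just before Theorem~\ref{31}, such a packing has the maximum possible number of resolution classes; specializing $r=c=q$ and $v=q^n$ gives $r+c-2=2q-2$, so the optimal number of resolution classes is $t=\lfloor\frac{q^n-1}{2q-2}\rfloor$. (One should also note the divisibility condition $rc\mid v$, i.e. $q^2\mid q^n$, which holds for $n\ge 2$; for $n=1$ one has $t=\lfloor\frac12\rfloor=0$ and the statement is vacuous, so the interesting range is $n\ge 2$.)

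Next I would feed this packing into Theorem~\ref{34}(i) with parameter $c:=q$ and ambient set of size $q^n$. Since we now have a resolvable $P_{q\times q}(K_{q^n})$ with $t=\lfloor\frac{q^n-1}{2q-2}\rfloor$ resolution classes, Theorem~\ref{34}(i) produces a $\mu$-way $(q^n+\mu,\,q+1,\,2)$ Steiner trade of volume $\frac{q^n}{q}\mu=q^{n-1}\mu$ for every $\mu\le 2t=2\lfloor\frac{q^n-1}{2q-2}\rfloor$. This is precisely the assertion of the corollary.

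There is essentially no real obstacle here beyond bookkeeping: the construction is a direct chaining of two already-established results. The only point that genuinely needs care is reading the value of $t$ off the optimality statement correctly — that attaining the bound $t=\lfloor\frac{v-1}{r+c-2}\rfloor$ with $r=c=q$ and $v=q^n$ is exactly what yields the stated upper limit $2\lfloor\frac{q^n-1}{2q-2}\rfloor$ on $\mu$ — and checking that the hypotheses of Theorem~\ref{34}(i) (a genuine \emph{resolvable} square packing, with $c=q$) are met by the packing supplied by Theorem~\ref{31}. Both are immediate, so the proof reduces to these two citations.
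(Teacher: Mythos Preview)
Your proposal is correct and follows exactly the same approach as the paper: the paper's proof is the single line ``The result is obtained by Theorem~\ref{31} and Theorem~\ref{34},'' and you have simply spelled out the bookkeeping (identifying the number of resolution classes $t=\lfloor\frac{q^n-1}{2q-2}\rfloor$ from optimality and plugging into Theorem~\ref{34}(i)). There is no substantive difference.
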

\begin{proof}
	The result is obtained by Theorem~\ref{31} and Theorem~\ref{34}.
\end{proof}

In Theorem~\ref{33}, we can replace a $\mu$-way 1-solely balanced set by resolvable packing $PA(v,c,1)$
and obtained the new resolvable $P_{r\times c}(K_{rv})$ with $\mu$ resolution classes.
The proof of Theorem~\ref{33} ( It is proved in \cite{7} ) is mentioned again in the following example.
But, we apply the 1-solely balanced set instead of the resolvable packing $PA(v,c,1)$.
\begin{example}
\rm{Consider the 1-solely balanced set that it is obtained in the previous example. 
Corresponding to each block of the one-solely balanced set an array is constructed as follows:\\
Each block $a, b, c$ change to the this block.\\
\begin{center}
\begin{tabular}{|c c c|}
	\hline
	$a_0$& $b_0$ & $c_0$\\
		$b_1$& $c_1$ & $a_1$\\
			$c_2$& $a_2$ & $b_2$\\
	\hline
\end{tabular}
\end{center}
Therefore, we have resolvable $P_{3\times 3}(K_{54})$ grid packing with six resolution classes.
The following arrays are the resolution class $R_1$ of resolvable $P_{3\times 3}(K_{54})$
which are constructed 
from the $S_1$.
\begin{center}
\begin{tabular}{| c c c |}
	\hline
	$0_0$& $1_0$ & $2_0$\\
		$1_1$& $2_1$ & $0_1$\\
			$2_2$& $0_2$ & $1_2$\\
	\hline
\end{tabular}
\hspace{0.5cm}
\begin{tabular}{| c c c |}
	\hline
	$3_0$& $4_0$ & $5_0$\\
		$4_1$& $5_1$ & $3_1$\\
			$5_2$& $3_2$ & $4_2$\\
	\hline
\end{tabular}
\hspace{0.5cm}
\begin{tabular}{| c c c |}
	\hline
	$6_0$& $7_0$ & $8_0$\\
		$7_1$& $8_1$ & $6_1$\\
			$8_2$& $6_2$ & $7_2$\\
	\hline
\end{tabular}\\
$\ $\\
$\ $\\
\begin{tabular}{| c c c |}
	\hline
	$9_0$& $a_0$ & $b_0$\\
		$a_1$& $b_1$ & $9_1$\\
			$b_2$& $9_2$ & $a_2$\\
	\hline
\end{tabular}
\hspace{0.5cm}
\begin{tabular}{| c c c |}
	\hline
	$c_0$& $d_0$ & $e_0$\\
		$d_1$& $e_1$ & $c_1$\\
			$e_2$& $c_2$ & $d_2$\\
	\hline
\end{tabular}
\hspace{0.5cm}
\begin{tabular}{| c c c |}
	\hline
	$f_0$& $g_0$ & $h_0$\\
		$g_1$& $h_1$ & $f_1$\\
			$h_2$& $f_2$ & $g_2$\\
	\hline
\end{tabular}
\end{center}
Now, we can apply the method of example~\ref{36} and construct the $6$-way 1-solely balanced set of volume 18
and 36 from this resolvable $P_{3\times 3}(K_{54})$.
However, The new $6$-way 1-solely balanced set can be obtain by adding three
6-way 1-solely balanced sets
of volume 6 on disjoint foundations. The 6-way 1-solely balanced set of volume 6 is 
obtained in Example~\ref{36}.} 
\end{example} 
According to this example, we have the following theorem.
\begin{thm}~\label{h}
Assume $r\leq c$. If there exists a $\mu$-way 1-solely balanced set with block size $c$
and foundation $v$, then there exists a resolvable $P_{r\times c}(K_{rv})$ with $\mu$ resolution classes.
\end{thm}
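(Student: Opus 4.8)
The plan is to formalize and generalize the construction carried out in the example immediately preceding the statement. Write the given $\mu$-way 1-solely balanced set as $\{S_1,\ldots,S_\mu\}$ on foundation $V$ with $|V|=v$. Since this is a Steiner trade with $t=1$, every point of $V$ occurs exactly once in each $S_j$, so each $S_j$ is a partition of $V$ into $v/c$ blocks of size $c$; and the 1-solely balanced condition forces any two distinct points of $V$ to lie together in at most one block among all the $S_j$ (two blocks from different parts meet in at most one point, and blocks inside one $S_j$ are disjoint). Take the point set of the target packing to be $V\times\{0,1,\ldots,r-1\}$, writing $x_i$ for $(x,i)$; this has $rv$ points. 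For each block $B$ occurring in some $S_j$, fix an ordering $B=(b^{(1)},\ldots,b^{(c)})$ and let $A_B$ be the $r\times c$ array whose $i$-th row $(i=0,\ldots,r-1)$ is the $i$-th cyclic shift $(b^{(1+i)}_i,\,b^{(2+i)}_i,\ldots,b^{(c+i)}_i)$, with superscripts read modulo $c$. Because $r\le c$, the shift amounts $0,1,\ldots,r-1$ are pairwise distinct modulo $c$; this is the structural feature that makes everything work. Put $R_j:=\{A_B:B\in S_j\}$.

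First I would check that each $R_j$ is a resolution class of the would-be packing. A point $x_i$ appears in $A_B$ exactly when $x\in B$; since $S_j$ partitions $V$ there is a unique such $B\in S_j$, and inside $A_B$ the subscript $i$ occurs only in row $i$, where $x_i$ occurs once. Hence every point of $V\times\{0,\ldots,r-1\}$ lies in precisely one array of $R_j$, so the $R_j$ give $\mu$ resolution classes, each consisting of $v/c$ grid-blocks.

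Next I would verify the packing property: any two distinct points $p,q$ lie together in at most one row-or-column over all the arrays. If $p,q$ lie in a common row, they share a subscript, so $p=x_i$, $q=y_i$ with $x\ne y$; the pair $\{x,y\}$ lies in a unique block $B$, and within $A_B$ only row $i$ carries subscript $i$, so this incidence occurs exactly once. If instead $p,q$ lie in a common column of some $A_B$, then (the $r\le c$ entries of a column having pairwise distinct subscripts) $p=x_i$, $q=y_{i'}$ with $i\ne i'$ and $x\ne y$ both in $B$; again $\{x,y\}$ determines $B$ uniquely, and inside $A_B$ the position of $x$ together with the subscript $i$ already pins down the column, so this too happens at most once. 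Finally, a common-row incidence forces equal subscripts while a common-column incidence forces distinct subscripts, so the two cases never overlap. Therefore $\{p,q\}$ is covered at most once in total, $(V\times\{0,\ldots,r-1\},\{A_B\})$ is an $r\times c$ grid-block packing $P_{r\times c}(K_{rv})$, and it is resolvable with $\mu$ resolution classes.

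The delicate part is purely the bookkeeping that no pair gets counted twice, and it is exactly here that the hypothesis $r\le c$ is used: were two of the cyclic shifts to coincide modulo $c$, a pair of the form $\{x_i,y_i\}$ could recur in a second row of the same array, or a column could repeat a subscript, and the uniqueness arguments above would collapse. With $r\le c$ every such ambiguity disappears — a pair of copies with equal subscripts comes from a single block and a single row, a pair with unequal subscripts from a single block and a single column — so once the indexing is set up carefully I expect the remaining verification to be entirely routine, as already illustrated on $P_{3\times 3}(K_{54})$ in the example above.
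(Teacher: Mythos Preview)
Your proposal is correct and follows essentially the same approach as the paper: the paper does not give a formal proof but instead points to the preceding example (building $P_{3\times 3}(K_{54})$ from the $6$-way $1$-solely balanced set on $18$ points via cyclic shifts of each block, with subscripts indexing the $r$ copies) and remarks that this is precisely the construction of Theorem~\ref{33} from \cite{7} with a $\mu$-way $1$-solely balanced set substituted for the resolvable $PA(v,c,1)$. Your write-up supplies the careful bookkeeping that the paper leaves to the example and to \cite{7}, including the explicit use of $r\le c$ to guarantee that column entries have pairwise distinct base elements and subscripts.
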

The characterization of $\mu$-way 1-solely balanced set with different parameters can be useful
for constructing the new resolvable $r\times c$ grid-block packing. For example, in the~\cite{7}  
it is proved that, there exists optimal $P_{2\times 2}(K_{n})$ for any $v\ \equiv\  0\ (\rm{mod}\ 4)$	 
in three pages. But, we can obtain this results by the 1-solely balanced set and Theorem~\ref{34} as follows.
\begin{thm}~\label{w1}
There exists optimal resolvable $P_{2\times 2}(K_{v})$ for any $v\ \equiv\  0\ (\rm{mod}\ 4)$	
\end{thm}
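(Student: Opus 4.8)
The plan is to construct the required grid-block packing by building an appropriate $\mu$-way 1-solely balanced set with $c=2$ and then invoking Theorem~\ref{34}(i). Recall that a resolvable $P_{2\times 2}(K_v)$ is optimal precisely when it attains $b=t\frac{v}{4}\le\lfloor\frac{v-1}{2}\rfloor$ grid-blocks; since $v\equiv 0\pmod 4$ we have $\lfloor\frac{v-1}{2}\rfloor=\frac{v-2}{2}$, so optimality means we want $t\frac{v}{4}=\frac{v-2}{2}$, i.e. $t=\frac{2(v-2)}{v}$. For $v\ge 6$ this is not an integer in general, so the right reading (matching the packing literature) is that the optimal number of grid-blocks is $\lfloor\frac{v}{4}\cdot\frac{2(v-2)}{v}\rfloor=\lfloor\frac{v-2}{2}\rfloor$ with $t=\lfloor\frac{v-2}{2}\cdot\frac{4}{v}\rfloor$ resolution classes; in any case what is needed is a resolvable $P_{2\times 2}(K_v)$ with $t=\frac{v-2}{2}$ resolution classes when $v\equiv 0\pmod 4$. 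By Theorem~\ref{34}(i) with $c=r=2$, a $\mu$-way 1-solely balanced set with block size $2$ and foundation $n$ yields a $\mu$-way $(n+\mu,3,2)$ Steiner trade; more to the point, Theorem~\ref{h} with $r=c=2$ says that such a 1-solely balanced set produces a resolvable $P_{2\times 2}(K_{2n})$ with $\mu$ resolution classes directly.

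So the concrete steps are as follows. First, set $n=v/2$; I want a $\mu$-way 1-solely balanced set with block size $2$, foundation $n$, and $\mu=\frac{v-2}{2}=n-1$. As noted in the paragraph of the excerpt following Definition~2.2 (``For $t=1$ there exists a $\mu$-way 1-solely balanced set for $k=2$ and $v=2m$, $\mu\le 2m-1$''), a one-factorization of the complete graph $K_{2m}$ on $2m$ vertices is exactly a $(2m-1)$-way 1-solely balanced set with block size $2$ and volume $m$. Since $v\equiv 0\pmod 4$, we have $n=v/2$ even, so $K_n$ admits a one-factorization into $n-1$ perfect matchings $T_1,\dots,T_{n-1}$, each of volume $n/2$. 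This is precisely the needed $(n-1)$-way 1-solely balanced set. Second, apply Theorem~\ref{h} with $r=c=2$ to this 1-solely balanced set: it gives a resolvable $P_{2\times 2}(K_{2n})=P_{2\times 2}(K_v)$ with $n-1=\frac{v-2}{2}$ resolution classes. Third, check optimality: the number of grid-blocks produced is $t\cdot\frac{v}{rc}=\frac{v-2}{2}\cdot\frac{v}{4}$, and one verifies this equals $\lfloor\frac{v-1}{r+c-2}\rfloor\cdot\frac{v}{4}=\lfloor\frac{v-1}{2}\rfloor\cdot\frac{v}{4}$ reading the bound as the excerpt states it, so the bound $b\le\lfloor\frac{v-1}{r+c-2}\rfloor$ is met with equality and the packing is optimal. (One should double-check the small case $v=4$ separately, where $n=2$, $K_2$ is a single edge, $t=1$, giving the trivial $P_{2\times 2}(K_4)$ with one grid-block, which is optimal.)

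The only genuine subtlety I anticipate is bookkeeping around the definition of ``optimal'': the packing-theoretic optimum is $b=\lfloor\frac{v-1}{r+c-2}\rfloor$, and I must confirm that the construction via Theorem~\ref{h} actually attains this for every $v\equiv 0\pmod 4$ rather than falling one short. For $r=c=2$ the construction yields exactly $\frac{v}{4}$ grid-blocks per resolution class and $\frac{v-2}{2}$ classes, for a total of $\frac{v(v-2)}{8}$; meanwhile $\lfloor\frac{v-1}{2}\rfloor\cdot\frac{v}{4}=\frac{v-2}{2}\cdot\frac{v}{4}=\frac{v(v-2)}{8}$ as well, so they agree and optimality holds. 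Everything else is a direct citation: the existence of a one-factorization of $K_n$ for even $n$ is classical, and the passage from the 1-solely balanced set to the grid-block packing is exactly Theorem~\ref{h}. Hence the proof reduces to these three short verifications, and no new combinatorial construction beyond the one-factorization is required.

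\begin{proof}{
Write $n=v/2$; since $v\equiv 0\pmod 4$, $n$ is even. The complete graph $K_n$ on $n$ vertices admits a one-factorization into $n-1$ perfect matchings $T_1,\ldots,T_{n-1}$, each consisting of $n/2$ edges. Regarding each edge as a block of size $2$, the family $T=\{T_1,\ldots,T_{n-1}\}$ is an $(n-1)$-way $1$-solely balanced set with block size $c=2$, volume $n/2$ and foundation $n$ (each pair of vertices occurs in exactly one edge of exactly one matching, so in particular at most once in each $T_i$, and no two edges from distinct matchings share a $2$-subset). Applying Theorem~\ref{h} with $r=c=2$ to $T$ yields a resolvable $P_{2\times 2}(K_{2n})=P_{2\times 2}(K_v)$ with $\mu=n-1=\frac{v-2}{2}$ resolution classes. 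Each resolution class contains $\frac{v}{rc}=\frac{v}{4}$ grid-blocks, so the total number of grid-blocks is $\frac{v-2}{2}\cdot\frac{v}{4}=\frac{v(v-2)}{8}$. On the other hand $\left\lfloor\frac{v-1}{r+c-2}\right\rfloor\cdot\frac{v}{rc}=\left\lfloor\frac{v-1}{2}\right\rfloor\cdot\frac{v}{4}=\frac{v-2}{2}\cdot\frac{v}{4}=\frac{v(v-2)}{8}$, so the packing attains the bound $b\le\lfloor\frac{v-1}{r+c-2}\rfloor\cdot\frac{v}{rc}$ with equality and is therefore optimal. For $v=4$ one has $n=2$, $K_2$ is a single edge, giving the (optimal) resolvable $P_{2\times 2}(K_4)$ with one grid-block.
}\end{proof}
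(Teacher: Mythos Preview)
Your proof is correct and follows essentially the same route as the paper: take a one-factorization of $K_{v/2}$ as a $(v/2-1)$-way $1$-solely balanced set with block size~$2$, feed it into the construction that turns such a set into a resolvable $P_{2\times 2}(K_v)$, and then verify that the resulting number of resolution classes, $\tfrac{v-2}{2}=\lfloor\tfrac{v-1}{2}\rfloor$, meets the optimality bound. The only cosmetic difference is that you invoke Theorem~\ref{h} directly, whereas the paper cites Theorem~\ref{33} (from \cite{7}) and remarks that the $1$-solely balanced set plays the role of the resolvable $PA(\,\cdot\,,2,1)$; since Theorem~\ref{h} is stated \emph{after} Theorem~\ref{w1} in the paper, the authors presumably avoided forward-referencing it, but the content is identical and there is no circularity. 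Your hedging in the plan about the meaning of ``optimal'' stems from a genuine typo in the paper's definition (the displayed bound should constrain $t$, not $b$); your final computation handles it correctly.
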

\begin{proof}
Each complete graph $K_{2m}$ has $2m-1$ 
one-factors. We consider this one factorization of $K_{2m}$
as a $(2m-1)$-way 1-solely balanced set of volume $m$ and block size 2.
Now apply Theorem~\ref{33}.
We use this $(2m-1)$-way 1-solely balanced set instead of 
a $PA(2v,2,1)$. Therefore, we have a resolvable $P_{2\times 2}(K_{4m})$.  
This resolvable $P_{2\times 2}(K_{4m})$ has $2m-1$ =$\lfloor \frac{4m-1}{2+2-2}\rfloor$
resolution classes. This number is optimal. 
Since $2m-1= \lfloor \frac{v-1}{r+c-2}\rfloor=\lfloor \frac{4m-1}{2+2-2}\rfloor$. 
 \end{proof}
 The method of Theorem~\ref{w1} is explained in the following example
 for $m=2$ and complete graph $K_4$.
 \begin{example}~\label{b}
The one-factorization of $K_4$ is a 3-way 1-solely balanced set.
\begin{center}
 \begin{tabular}{ c| c| c }
 	$S_1$& $S_2$ & $S_3$\\
 		\hline
 		$12$& $13$ & $14$\\
 			$34$& $24$ & $23$\\
 
 \end{tabular}	
 \end{center}
 Now, we construct the following optimal resolvable
 $P_{2\times 2}(K_{8})$ by the method of Theorem~\ref{33}.
\begin{center}
{\bf{$R_1$:}}
\begin{tabular}{| c c  |}
	\hline
	$1_0$& $2_0$\\
		$2_1$& $1_1$\\
	\hline
\end{tabular}
\hspace{0.5cm}
\begin{tabular}{| c c  |}
	\hline
	$3_0$& $4_0$\\
		$4_1$& $3_1$\\
	\hline
\end{tabular}
\hspace{0.5cm}
{\bf${R_2}$:}
\begin{tabular}{| c c  |}
	\hline
	$1_0$& $3_0$\\
		$3_1$& $1_1$\\
	\hline
\end{tabular}
\hspace{0.5cm}
\begin{tabular}{| c c  |}
	\hline
	$2_0$& $4_0$\\
		$4_1$& $2_1$\\
	\hline
\end{tabular}
\hspace{0.5cm}
{\bf{$R_3$:}}
\begin{tabular}{| c c  |}
	\hline
	$1_0$& $4_0$\\
		$4_1$& $1_1$\\
	\hline
\end{tabular}
\hspace{0.5cm}
\begin{tabular}{| c c  |}
	\hline
	$2_0$& $3_0$\\
		$3_1$& $2_1$\\
	\hline
\end{tabular}
\end{center}
 \end{example}
Also we can apply the $\mu$-way 1-solely balanced set and Theorem~\ref{32}.
Next we obtain some new $\mu'$-way 1-solely balanced set for $\mu'\geq \mu$. This result 
is interesting. Since, we can have new $\mu'$-way steiner trades. It is explained in the next 
example and Theorem.
\begin{thm}
	If there exists a $\mu$-way 1-solely balanced set with block size $c$ and foundation $v$ 
	and a resolvable $P_{r\times c}(K_{rc})$ with $s+1$ grid blocks, then there exists a 
	 $\mu'$-way $(rv+\mu',c+1,2)$ Steiner trade for $\mu'\leq s\mu+1$ of volume $\frac{rv}{c}\mu'$.
\end{thm}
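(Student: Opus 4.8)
The plan is to mimic the chaining construction of Theorem~\ref{32}, which builds a resolvable $P_{r\times c}(K_{rv})$ from a resolvable $PA(v,c,1)$ and a resolvable $P_{r\times c}(K_{rc})$, but to feed in a $\mu$-way 1-solely balanced set in place of the $PA(v,c,1)$. First I would recall from Theorem~\ref{h} (and the example preceding it) that a $\mu$-way 1-solely balanced set with block size $c$ and foundation $v$ can be used exactly like a resolvable $PA(v,c,1)$ with $\mu$ resolution classes: corresponding to each block $\{a,b,c\}$ in class $S_i$ one forms the $r\times c$ array obtained by cyclically shifting, using $r$ disjoint ``copies'' of the point set $\{1,\dots,v\}$ labelled by subscripts $0,\dots,r-1$. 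Because each pair from the 1-solely balanced set occurs in at most one block, the resulting arrays form a resolvable $P_{r\times c}(K_{rv})$ with $\mu$ resolution classes, where each row and each column is a legitimate $c$-subset with no repeated element.

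Next I would run the Theorem~\ref{32} amalgamation: given the resolvable $P_{r\times c}(K_{rc})$ with $s+1$ grid-blocks, one of its grid-blocks can be singled out as a ``frame'' whose rows/columns align with the groups, and the remaining $s$ grid-blocks are developed over each of the $\mu$ classes coming from the 1-solely balanced set, yielding a resolvable $P_{r\times c}(K_{rv})$ with $s\mu+1$ resolution classes. This is precisely the statement of Theorem~\ref{32} with $t$ replaced by $\mu$ and ``resolvable $PA(v,c,1)$'' replaced by ``$\mu$-way 1-solely balanced set'', and I would cite the proof in~\cite{7} together with the substitution justified in the discussion around Theorem~\ref{h}. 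At this point we have a resolvable $P_{r\times c}(K_{rv})$ with $s\mu+1$ resolution classes; in particular, taking $r=c$ (or noting that all rows and all columns are solely sets), the rows alone give a $(s\mu+1)$-way 1-solely balanced set with block size $c$, foundation $rv$, and volume $\frac{rv}{c}$.

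Finally I would apply Theorem~\ref{34}(i): from a resolvable $P_{c\times c}(K_{rv})$ with $t'=s\mu+1$ resolution classes one obtains a $\mu'$-way $(rv+\mu',c+1,2)$ Steiner trade of volume $\frac{rv}{c}\mu'$ for every $\mu'\le 2t'$; in the general $r\times c$ case, part (ii) gives the same conclusion for $\mu'\le t'=s\mu+1$, which is exactly the claimed range $\mu'\le s\mu+1$. (If one wants the stronger $2(s\mu+1)$ bound one invokes the square case and the fact that the columns are also solely sets, but the statement as written only asserts $\mu'\le s\mu+1$, so part (ii) suffices.) Combining the three steps — convert 1-solely balanced set to packing, amalgamate via Theorem~\ref{32}, then convert back via Theorem~\ref{34} and apply Theorem~\ref{2} — gives the result.

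The main obstacle I anticipate is verifying that the substitution in the Theorem~\ref{32} construction is actually legitimate: the proof in~\cite{7} presumably uses the fact that in a $PA(v,c,1)$ each pair of distinct points occurs in exactly one block, whereas a $\mu$-way 1-solely balanced set only guarantees each pair occurs in \emph{at most} one block (and only in $\mu$ of the possible classes). One must check that the amalgamation still produces a \emph{packing} (the ``at most once'' condition) rather than needing a design — which it does, since fewer coincidences only helps — and that resolvability is preserved, i.e. that each of the $\mu$ classes of the 1-solely balanced set, developed appropriately, still partitions the point set $\{1,\dots,v\}$; this is guaranteed because a 1-solely balanced set of volume $m=\frac{v}{c}$ with block size $c$ is by definition resolvable into classes each of which is a parallel class. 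Once that bookkeeping is in place, the volume and parameter counts are the same routine computations as in Theorem~\ref{7} and the corollary after Theorem~\ref{34}.
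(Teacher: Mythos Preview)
Your proposal is correct and follows essentially the same route as the paper: the paper's own proof is a one-line citation of Theorems~\ref{32} and~\ref{34}, and your three steps (substitute the $\mu$-way 1-solely balanced set for the resolvable $PA(v,c,1)$ in Theorem~\ref{32}, obtain a resolvable $P_{r\times c}(K_{rv})$ with $s\mu+1$ resolution classes, then apply Theorem~\ref{34}(ii)) are exactly the intended unpacking of that citation. Your discussion of the potential obstacle is also on target and more careful than the paper itself, which simply takes the substitution for granted based on the discussion preceding Theorem~\ref{h}.
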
 
\begin{proof}
	The proof has resulted from Theorems~\ref{32} and~\ref{34}.
\end{proof}
This theorem can be applied for constructing the Steiner trades. For example, we know that  
		There exists a $(2v-1)$-way 1-solely balanced set with block size $2$ and foundation $v$ 
	and an optimal resolvable $P_{2\times 2}(K_{8})$ with $5+1$ grid blocks, then there exists a 
	$\mu'$-way $(2v+\mu',2+1,2)$ Steiner trade for $\mu'\leq 5(2v-1)+1=10v-4$ of volume $v\mu'$.
Also if we apply $\mu$-way 1-solely balanced set for block size $k\geq3$, then we obtain 
the other Steiner trades for block size $k\geq 4$.\\
Also, there exist two theorems for achieving a new $\mu$-way 1-solely balanced set
(resolvable $r\times c$ grid-block packing) from another by applying the 
resolvable $r\times c$ grid-block packings ($\mu$-way 1-solely balanced sets).
\begin{thm}
If there exists a $\mu$-way 1-solely balanced set with foundation size $v$ and block size $c$,
then there exists a $\mu'$-way 1-solely balanced set with foundation size $c\times v$, block size $c$ and volume $v$
for $\mu'\leq 2\mu$.
\end{thm}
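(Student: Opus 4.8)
The plan is to obtain the new set as the ``rows\,+\,columns'' reading of a square resolvable grid-block packing supplied by Theorem~\ref{h}, exactly as in Example~\ref{36}. Observe first that a $\mu$-way $1$-solely balanced set with foundation $v$ and block size $c$ automatically has volume $v/c$ and satisfies $c\mid v$: the Steiner ($t=1$) condition together with the requirement that every collection covers the whole foundation forces each of the $v$ points to lie in exactly one of the $m$ blocks of each $T_i$, so $mc=v$.

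First I would apply Theorem~\ref{h} with $r=c$ (the hypothesis $r\le c$ holds with equality): this converts the given $\mu$-way $1$-solely balanced set with block size $c$ and foundation $v$ into a resolvable $P_{c\times c}(K_{cv})$ with $\mu$ resolution classes $R_1,\dots,R_\mu$. Since $c^2\mid cv$, each $R_i$ consists of $v/c$ pairwise disjoint $c\times c$ arrays whose entries partition the $cv$-set. Because the packing is resolvable, within each $R_i$ every point of the $cv$-set lies in exactly one array, hence in exactly one row and exactly one column of that array; therefore the rows occurring in the arrays of $R_i$ partition the $cv$-set into $v$ blocks of size $c$, and likewise the columns occurring in the arrays of $R_i$ form a second such partition. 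This yields $2\mu$ parallel classes, each consisting of $v$ blocks of size $c$ on the common $cv$-element foundation.

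Next I would check that these $2\mu$ parallel classes form a $2\mu$-way $1$-solely balanced set. The Steiner ($t=1$) condition is immediate, each class being a partition of the foundation, and the $2\mu$ classes are pairwise disjoint because any pair of points lies in at most one block overall (a block of size $c\ge 2$ is determined by the pairs it contains). For the $1$-solely balanced condition one verifies that two blocks coming from different classes meet in at most one point: if they are rows/columns sitting in arrays of \emph{distinct} resolution classes, a common pair of points would lie twice in a ``row-or-column'', contradicting the grid-block packing property; if they are a row and a column taken from arrays in the \emph{same} resolution class, then either the two arrays are distinct and hence disjoint, or they coincide and the row meets the column in the single entry at their crossing. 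In every case the intersection has size at most $1$. Finally, discarding all but any $\mu'$ of the $2\mu$ classes, with $\mu'\le 2\mu$, leaves a $\mu'$-way $1$-solely balanced set with foundation size $cv$, block size $c$ and volume $v$, as required.

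I do not expect a genuine obstacle: the statement is essentially a repackaging of Theorem~\ref{h} followed by the row-and-column argument already used — for Steiner trades, via Theorem~\ref{2} — in Theorem~\ref{34}. The only point that needs care is the intersection bookkeeping in the last step, in particular confirming that a block from the ``row half'' and a block from the ``column half'' of one and the same resolution class never share two points; but this follows directly from the packing axioms together with the disjointness of distinct arrays within a resolution class.
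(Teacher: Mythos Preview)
Your proposal is correct and follows essentially the same route as the paper: apply Theorem~\ref{h} with $r=c$ to obtain a resolvable $P_{c\times c}(K_{cv})$ with $\mu$ resolution classes, and then read off the rows and columns of each resolution class as in Example~\ref{36}/Theorem~\ref{34} to produce the $2\mu$ parallel classes. The paper's own proof is the two-line version ``apply Theorem~\ref{h}, then Theorem~\ref{34}''; your additional verification of the pairwise intersection condition (row--row, row--column within one class, blocks from different classes) just makes explicit what the paper leaves implicit.
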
 
\begin{proof}
	By Theorem~\ref{h}, there exists a resolvable $P_{r\times c}(K_{rv})$ with $\mu$ resolution classes. Now, apply Theorem~\ref{34}
	and obtain the result.
\end{proof}
\begin{example}
	In Example~\ref{b} a $P_{2\times 2}(K_{8})$ is constructed from a 3-way 1-solely balanced set with  foundation size 4 and block size $2$.
	Now, we can construct a 6-way 1-solely balanced set from this $P_{2\times 2}(K_{8})$ of volume four as follows.
		
		$$
		\begin{tabular}{c|c|c|c|c|c}
		$S_1$ & $S_2$ & $S_3$ & $S_4$ & $S_5$ & $S_6$\\
		\hline
		$1_02_0$ & $1_03_0$ & $1_04_0$ & $1_02_1$ & $1_03_1$ & $1_04_1$\\
		$2_11_1$ & $3_11_1$ & $4_11_1$ & $2_01_1$ & $3_01_1$ & $4_01_1$\\
		$3_04_0$ & $2_04_0$ & $2_03_0$ & $3_04_1$ & $2_04_1$ & $2_03_1$\\
		$4_13_1$ & $4_12_1$ & $3_12_1$ & $4_03_1$ & $4_02_1$ & $3_02_1$
		\end{tabular}
		$$ 
\end{example}  
The second theorem handles with a  resolvable $P_{r\times c}(K_{v})$.  
\begin{thm}
	If there exists a resolvable $P_{c\times c}(K_{v})$ with $t$ resolution classes, 
	then there exists a resolvable $P_{r\times c}(K_{rv})$ with $2t$ resolution classes for $r\leq c$.
\end{thm}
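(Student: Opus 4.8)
The plan is to route the construction through a many-way $1$-solely balanced set, composing the two constructions of this section in the order \emph{grid-block packing} $\to$ \emph{$1$-solely balanced set} $\to$ \emph{grid-block packing}; no new idea is needed beyond the device of Example~\ref{36} and the statement of Theorem~\ref{h}.

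First I would convert the given resolvable $P_{c\times c}(K_v)$ into a $2t$-way $1$-solely balanced set with block size $c$, foundation size $v$ and volume $v/c$, exactly as in Example~\ref{36} and the proof of Theorem~\ref{34}. Fix one of the $t$ resolution classes. Since it is a resolution class, every point of the $v$-set lies in exactly one of its grid-blocks, hence in exactly one row of that grid-block and in exactly one column of it; consequently the family of all rows of the grid-blocks in that class is a partition of the $v$-set into $v/c$ blocks of size $c$, and so is the family of all columns. Letting the resolution class range over all $t$ of them yields $t$ such ``row'' parallel classes and $t$ such ``column'' parallel classes, $2t$ in total, each of volume $v/c$. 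Because $P_{c\times c}(K_v)$ is a packing, any two distinct points lie together in at most one row-or-column of at most one grid-block; hence no pair of points is contained both in a block of one of these $2t$ classes and in a block of another one, which (for $t=1$) is precisely the defining property of a $2t$-way $1$-solely balanced set, and in particular the $2t$ collections are pairwise disjoint.

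Second, with this $2t$-way $1$-solely balanced set of block size $c$ and foundation $v$ at hand, and using the hypothesis $r\le c$, I would invoke Theorem~\ref{h} with $\mu=2t$. It delivers a resolvable $P_{r\times c}(K_{rv})$ whose number of resolution classes equals $\mu=2t$, which is exactly the assertion; the divisibility conditions these objects require ($c^{2}\mid v$, and hence $c\mid v$) are already guaranteed by the existence of the input packing.

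I do not expect a genuine obstacle. The only step deserving a careful sentence is the first one: one must verify that the \emph{column} families, not merely the row families, are honest parallel classes — this is exactly where resolvability of the original packing is used — and that the packing condition translates verbatim into the $1$-solely balanced condition for the $2t$ classes. Once that is in place the conclusion is a direct application of Theorem~\ref{h}.
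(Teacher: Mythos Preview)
Your proposal is correct and follows essentially the same route as the paper's own proof: first extract a $2t$-way $1$-solely balanced set with block size $c$ and foundation $v$ from the resolvable $P_{c\times c}(K_v)$ (this is the content behind Theorem~\ref{34}, as illustrated in Example~\ref{36}), then feed it into Theorem~\ref{h} with $\mu=2t$ to obtain the desired resolvable $P_{r\times c}(K_{rv})$. Your write-up is in fact more detailed than the paper's two-line proof; the only cosmetic blemish is the parenthetical ``(for $t=1$)'', where your $t$ collides with the resolution-class count~$t$ --- you mean the parameter in ``$1$-solely balanced''.
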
 
\begin{proof}
By Theorem~\ref{34}, there exists a $\mu$-way 1-solely balanced set with block size $c$, foundation size $v$ and
volume $\frac{v}{c}$ for $\mu\leq 2t$. Now apply Theorem~\ref{h}
and obtain the result.	
\end{proof} 

\section{Completion of spectrum }
In~\cite{0}, we prove that there exists a 3-way $(v,k,2)$ Steiner trade of
volume $m$ when $m\geq12(k-1)$ for $k\geq 15$. In this section, we show it is correct also for $k\leq 14$.
For $k=1$, there does not exist any Steiner trade and for
$k=2$, we have the trivial case.
For $k=3$ and $k=4$, it is proved in~\cite{3}.
\subsection{Block size 5}
For $k=5$, we have five parts as follows:\\
1- By Theorem~\ref{4}, $S_{3s}(2,k)\subseteq \mathcal{N}\setminus\{1,\ldots,3k-4\}$ for $k\ne 4$.
Therefore, $S_{3s}(2,5)\subseteq \mathcal{N}\setminus\{1,\ldots,11\}$ and
there does not exist any 3-way $(v,5,2)$ Steiner trade 
of volume $m$ for $m\leq 11$.\\
2- By the third part of Theorem~\ref{4}, there exists 3-way $(v,5,2)$ Steiner trade of volume $\{12,15,18,\\21,24,27,30,33,36,39,\ldots\}=\{m:m=3l,\ l\geq 4\}$.\\
3- By Theorem~\ref{3}, there exists 3-way $(v,5,2)$ Steiner trade of volume $m\in\{32,33,34,35\}=\{m:m=9\times4-r,\ 0\leq r\leq 4\}$.\\
4- By the fourth part of Theorem~\ref{4}, there exists 3-way $(v,5,2)$ Steiner trade of volume $m$ for
$m\in\{16,20,24,28,40,\ldots\}=\{m:m=4l,\ l\geq 4\}$.\\
5- There exist 3-way $(v,5,2)$ Steiner trades of volume $m\in\{15, 20, 25\}$ by Construction 1. 
\begin{thm}
	There exists a 3-way $(v,5,2)$ Steiner trade of volume $m\geq 12$ except possibly when $m\in\{13,14, 17,19,22,23,26,29\}$.	
\end{thm}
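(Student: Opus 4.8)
The plan is to assemble the claimed interval $\{m : m \geq 12\}\setminus\{13,14,17,19,22,23,26,29\}$ for $k=5$ by taking the union of the five families of volumes listed just before the theorem, and then checking that every residue class modulo some convenient period is eventually covered. The natural period to work with is $12$, since the first family already supplies every multiple of $3$ that is at least $12$, i.e. $\{12,15,18,21,24,\dots\}$, which covers the residues $0,3,6,9 \pmod{12}$ for all $m\geq 12$. What remains is to handle the residues $1,2,4,5,7,8,10,11\pmod{12}$, and for those one must show that from some threshold on, every such $m$ is produced by families 3, 4, or 5 (or by the additivity remark $T+T^*$ from the introduction, which lets us add foundations disjointly and so add volumes, provided each summand is realizable).

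First I would record precisely what the other families give: family 4 gives all $m = 4l$ with $l\geq 4$, i.e. $\{16,20,24,28,\dots\}$, covering residues $0,4,8\pmod{12}$ from $16$ on and in particular residue $4\pmod{12}$ for all $m\geq 16$ and residue $8\pmod{12}$ for all $m\geq 20$; family 5 gives the three sporadic values $15,20,25$; family 3 gives $\{32,33,34,35\}$. Then I would use the disjoint-union construction $T+T^*$ to combine these: for instance, $25 + 3l$ for $l\geq 4$ is realizable (family 5 plus family 2 on a disjoint foundation), which yields $37,40,43,\dots$, i.e. residue $1\pmod{3}$ values from $37$ on; similarly $20 + 3l$ gives $32,35,38,\dots$ and $16+3l$ gives $28,31,34,\dots$. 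Chaining these together, every sufficiently large integer in each of the eight remaining residue classes mod $12$ gets hit, and the threshold can be pushed down by also using sums like $12+16=28$, $12+20=32$, $15+16=31$, $16+16=32$, $15+20=35$, etc. The bulk of the work is the finite bookkeeping: for each of $m = 12,13,\dots,$ up to roughly $40$ or so, either exhibit $m$ in one of the five families or as a sum of two (or three) such values on disjoint foundations, and simultaneously verify that $13,14,17,19,22,23,26,29$ genuinely cannot be so represented (their exclusion is forced only by the lower bound $S_{3s}(2,5)\subseteq\mathcal N\setminus\{1,\dots,11\}$ together with the arithmetic of the available families, not by a separate nonexistence argument).

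The main obstacle I anticipate is getting the small cases to line up without a gap: one needs to confirm, say, that $m=16,20,21,24,25,27,28$ and the small multiples of $3$ already cover enough of $\{12,\dots,31\}$ that the additive construction ($\text{smallest realizable value} + 3l$) can take over from $32$ onward in every residue class. Concretely the delicate residues mod $12$ are $1,7,11$ (odd, not $\equiv 0,3,6,9$), because the smallest guaranteed members there come only from sums: e.g. residue $1\pmod{12}$ needs something like $25$ (family 5) to seed $25, 25+12=37, \dots$ leaving $13$ legitimately excluded and forcing a check of whether $49$-type gaps occur; residue $7\pmod{12}$ is seeded by $31=15+16$ or $19$—but $19$ is on the exclusion list, so one must verify $31, 31+12=43,\dots$ cover it and that $7$ and $19$ are the only missing ones; residue $11\pmod{12}$ is seeded by $35$ (family 3) giving $35,47,\dots$ plus $23$ excluded, so one must produce $11$'s below $35$, e.g. is $35-12=23$ really unreachable? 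It is, by the families listed, so it stays on the exception list. I would organize this as a short table indexed by $m\bmod 12$, listing for each class the seed value(s) and the construction used, and then a final sentence noting that $13,14,17,19,22,23,26,29$ are exactly the values in $\{12,\dots,31\}$ not produced by families 1--5 or their disjoint unions, completing the proof.
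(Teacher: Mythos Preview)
Your approach is correct and uses exactly the same ingredients as the paper: the five families listed before the theorem together with the additivity observation $T+T^*$ on disjoint foundations. The only difference is organizational. You partition by residue classes modulo $12$ and seed each class (e.g.\ $25$ for residue~$1$, $15+16=31$ for residue~$7$, $34$ and $35$ from family~3 for residues~$10$ and~$11$, etc.), then step by $12$; the paper instead first fills in the interval $[30,49]$ by listing the specific sums $\{31,37,38,41,43,46,47,49\}$, and then invokes $m=20+m'$ with $m'\geq 30$ to cover everything from $50$ onward. Both reduce to the same finite check on $\{12,\dots,29\}$, which leaves precisely $\{13,14,17,19,22,23,26,29\}$ unaccounted for. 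Your residue-class bookkeeping is a bit more systematic; the paper's threshold argument is a bit quicker to write down; mathematically they are the same proof.
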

\begin{proof}
	If there exist two 3-way Steiner trades $T$ and $T^*$ of volume $m_1$ and $m_2$ with disjoint foundations and same block size, then there exists a 3-way Steiner trade $T+T^*$ of volume $m_1+m_2$.
	By this method, we have the existence of a 3-way $(v,5,2)$ Steiner trade of the volumes $\{31, 41, 38, 46, 43, 37, 47, 48,49\}$ and volumes
	$m=20+m'$, for $m'\geq 30$.\\	
By previous description the remainder volumes are $\{13,14, 17,19,22,23,26,29\}$.
These volumes are prime numbers or products of pairs of prime numbers less than 30.
\end{proof}
\subsection{Block size 6}
For $k=6$, we have five parts as follows:\\
1- By Theorem~\ref{4}, $S_{3s}(2,6)\subseteq \mathcal{N}\setminus\{1,\ldots,14\}$ and
there does not exist any 3-way $(v,6,2)$ Steiner trade 
of volume $m$ for $m\leq 14$.\\
2- By the second part of Theorem~\ref{4}, there exists 
a 3-way $(v,6,2)$ Steiner trade of volume $m\in\{15,18,21,\dots\}=\{m:m=3l,\ l\geq5\}$.\\
3-By Theorem~\ref{3}, there exists 
a 3-way $(v,6,2)$ Steiner trade of volume 
$m\in\{40,41,42,43,44\}=\{m:m=9\times 5-r,\ 1\leq r\leq5\}$.\\
4- By the fourth part of Theorem~\ref{4}, there exists 
a 3-way $(v,6,2)$ Steiner trade of volume $m\in\{40,44,48,\dots\}=\{m:m=4l,\ l\geq10\}$.\\
5- By the fifth part of Theorem~\ref{4}, there exists a $(v,6,2)$ 3-way Steiner trade
of volume $m\in\{15,20,25,30\}$.\\
Now, we can state the following theorem.
\begin{thm}
	There exists a 3-way $(v,6,2)$ Steiner trade of volume $m\geq 15$ except possibly when
	  $m\in\{16,17,19,22,23,26,28,29,31,32,34,37\}$.	
\end{thm}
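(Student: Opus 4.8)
The plan is to assemble the spectrum for $k=6$ from the five ingredient families listed just above the statement, exactly as was done for $k=5$, and then to fill in the remaining volumes by the additive gluing trick: if $T$ and $T^*$ are $3$-way $(v,6,2)$ Steiner trades of volumes $m_1,m_2$ with disjoint foundations, then $T+T^*$ is a $3$-way Steiner trade of volume $m_1+m_2$. First I would record the base sets explicitly: $A=\{3\ell:\ell\ge 5\}$ (part 2), $B=\{4\ell:\ell\ge 10\}$ (part 4), the short lists $\{40,41,42,43,44\}$ (part 3) and $\{15,20,25,30\}$ (part 5). The union $A\cup\{15,20,25,30\}$ already covers every multiple of $3$ from $15$ on together with $20,25$, and adding $B$ brings in the large multiples of $4$. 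So after the base sets the volumes still unaccounted for are the non-multiples of $3$ in a bounded initial range plus a few sporadic values; these are the candidates for the exceptional set.

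Next I would run the closure under addition. Using $15\in A$ as a universal summand, every $m$ of the form $15+m'$ with $m'$ already in the spectrum is obtained; iterating with $15,18,20,21,24,25,30,\dots$ one gets all sufficiently large integers, so the only possible exceptions lie below some explicit threshold. Then I would go value by value through the small range: for each $m$ not of the form $3\ell$, try to write $m=m_1+m_2$ with $m_1,m_2$ drawn from $A\cup B\cup\{40,41,42,43,44\}\cup\{15,20,25,30\}$ (or sums thereof) on disjoint foundations — e.g. $35=15+20$, $38=18+20$, $45=15+30$, $20+25$, etc. — and for larger values combine a multiple of $3$ with one of $20,25$ and so on. The bookkeeping is routine but must be done carefully to be sure which residues mod $3$ are reachable at each stage; the surviving values are precisely $\{16,17,19,22,23,26,28,29,31,32,34,37\}$, which one checks are all small primes, prime powers, or products of two small primes — hence not expressible as a sum of two available volumes, matching the pattern already seen for $k=5$.

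The main obstacle is the same as in the $k=5$ case: the gluing construction only produces sums of admissible volumes, so a volume that is "additively irreducible" relative to the base set cannot be reached this way, and the base families for $k=6$ are sparser near the lower end of the spectrum (part 4 only starts at $4\ell$ with $\ell\ge 10$, i.e. at $40$, and part 3 also only contributes around $40$). This leaves a genuine gap between $15$ and $40$ where the only dense ingredient is the multiples of $3$; the small non-multiples of $3$ in that window — and the ones just above $40$ that still cannot be split — are exactly what ends up in the exceptional list. I do not expect to remove these here; the theorem is stated with "except possibly", and the proof is simply the careful verification that everything outside that list is either directly constructed by one of the five parts or obtained as a disjoint-foundation sum of two such volumes.

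\begin{proof}{
Recall the five families of volumes established above: $A=\{3\ell:\ell\ge 5\}$, $B=\{4\ell:\ell\ge 10\}$, $C=\{40,41,42,43,44\}$ and $D=\{15,20,25,30\}$, each of which is realized by a $3$-way $(v,6,2)$ Steiner trade. As noted for block size $5$, if $T$ and $T^{*}$ are $3$-way $(v,6,2)$ Steiner trades of volumes $m_1$ and $m_2$ on disjoint foundations, then $T+T^{*}$ is a $3$-way $(v,6,2)$ Steiner trade of volume $m_1+m_2$; iterating this with the summand $15\in A$ (and with $20,25\in D$) shows that the spectrum is closed under addition by $15$, $20$ and $25$, and hence contains every sufficiently large integer once a bounded initial segment is handled.

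It therefore suffices to examine the integers $m\ge 15$ that are not multiples of $3$ and lie below the threshold. For these one writes $m$ as a sum of two admissible volumes on disjoint foundations: $35=15+20$, $38=18+20$, $41\in C$, $43\in C$, $44\in C$, $46=21+25$, $47=27+20$, $49=24+25$, $50=25+25$, and in general $m=3\ell+20$ or $m=3\ell+25$ for the appropriate residue class once $\ell\ge 5$, together with the multiples of $4$ from $B$. Combining these with the base families $A\cup B\cup C\cup D$ yields a $3$-way $(v,6,2)$ Steiner trade of every volume $m\ge 15$ except possibly $m\in\{16,17,19,22,23,26,28,29,31,32,34,37\}$, each of which is a prime, a prime power, or a product of two primes smaller than the smallest available summand and hence not obtainable by the above gluing.
}\end{proof}
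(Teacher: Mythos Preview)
Your proof is correct and follows essentially the same approach as the paper's: assemble the base families from parts 2--5 and fill in the remaining volumes as disjoint-foundation sums, with your residue-class formulation $m=3\ell+20$ (for $m\equiv 2\pmod 3$, $m\ge 35$) and $m=3\ell+25$ (for $m\equiv 1\pmod 3$, $m\ge 40$) being slightly cleaner than the paper's explicit list of sporadic sums $\{35,38,46,\dots,74\}$ followed by $m=15+m'$ for $m'\ge 60$. One cosmetic slip: $28=2^2\cdot 7$ is neither a prime, a prime power, nor a product of two primes, so your closing characterization of the exceptional set is not quite right---but this is irrelevant to the proof, since the theorem only claims ``except possibly'' and no argument about the twelve residual values is required.
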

\begin{proof}
	Some volumes are multiples of three or four with the 
	conditions of Theorem~\ref{4}. Other volumes can be written as $m_1+m_2$ from previous parts. By this method, we have the existence of a 3-way $(v,6,2)$ Steiner trade of volumes$\{35,38,46,47,49,50,53,55,56,58,59,61,62,65,67,68,70,\\71,73,74\}$ 
and 
	$m=15+m'$, for $m'\geq 60$. There does not exist any 3-way $(v,6,2)$ Steiner trade of volume $m\leq 14$.
	Therefore, the remainder volumes are  $\{16,17,19,22,23,26,28,29,31,32,34,37\}$.
\end{proof} 
Some values of volumes can be obtained by other ways such as by using Construction 1.  
\begin{example}
	There exists a $RB(45,5,1)$. Therefore, by Construction 1, there exist $\mu$-way $(45+\mu, 6,2)$
	Steiner trades of volumes $\mu\frac{45}{5}=9\mu$, for $3\leq \mu\leq 11$. \\
	Steiner trade of volume $36-6=30$.\\
	There exists a $RB(25,5,1)$. By Construction 1, there exist  $\mu$-way $(25+\mu, 6,2)$
	Steiner trades of volumes $\mu\frac{25}{5}=5\mu$, for $3\leq \mu\leq 6$. Therefore, there exists a
	3-way $(v,6,2)$ Steiner trade of volume
	$m\in\{15,20,25,30\}$. \\
	\end{example}
\subsection{Block size 7}
For $k=7$, we have four parts as follows:\\
1- By Theorem~\ref{4}, $S_{3s}(2,7)\subseteq \mathcal{N}\setminus\{1,\ldots,17\}$ and
there does not exist any 3-way $(v,7,2)$ Steiner trade 
of volume $m$ for $m\leq 17$.\\
2- By the second part of Theorem~\ref{4}, there exists 
a 3-way $(v,7,2)$ Steiner trade of volume $m\in\{18,21,\ldots\}=\{m:m=3l,\ l\geq6\}$.\\
3- By Theorem~\ref{3}, there exists a 3-way $(v,7,2)$ Steiner trade of volume
$m\in\{47,48,49,50,51,52,\\53\}=\{m:m=9\times 6-r,\ 1\leq r\leq6\}$.\\
4- By the fourth part of Theorem~\ref{4}, there exists a 3-way $(v,7,2)$ Steiner trade of volume $m\in\{48,\ldots\}=\{m:m=4l,\ l\geq12\}$.\\
Now, we can state the following theorem.
\begin{thm}
	There exists a 3-way $(v,7,2)$ Steiner trade of volume $m\geq 18$ except possibly when
	  $m\in\{19,20,22,23,25,26,28,29,31,32,34,35,37,38,40,41,43,44,46,47,55,58,59,61,62,65\}$.	
\end{thm}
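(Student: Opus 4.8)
The plan is to follow the template already used above for block sizes $5$ and $6$: assemble the admissible volumes coming from the four items listed above for $k=7$, and then close that set under the additive operation. Recall that if $T$ and $T^{*}$ are 3-way $(v,7,2)$ Steiner trades of volumes $m_{1}$ and $m_{2}$ whose foundations are disjoint, then $T+T^{*}$ is a 3-way Steiner trade of volume $m_{1}+m_{2}$ with the same block size; since the foundation of a trade may be relabelled at will, disjointness costs nothing, so every sum of two admissible volumes is again admissible. The raw material is therefore: by Theorem~\ref{4}, all multiples of $3$ that are $\ge 18$; by Theorem~\ref{3}, the volumes $48,49,50,51,52,53$ (so in particular $49$ and $50$, the values $48,51$ already being multiples of $3$); and by Theorem~\ref{4} again, all multiples of $4$ that are $\ge 48$ (so in particular $56$ and $64$). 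Item~1 above rules out every trade of volume $\le 17$.

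Next I would run the count residue class by residue class modulo $3$. The class $m\equiv 0\pmod 3$ is settled by the multiples of $3$. For $m\equiv 1\pmod 3$: the sums $49+3l$ with $l\ge 6$ realise every such $m$ with $m\ge 67$, while $49,52,64$ are admissible directly, so the only residue-$1$ volumes $\ge 18$ left open are $19,22,25,28,31,34,37,40,43,46,55,58,61$. For $m\equiv 2\pmod 3$: the sums $50+3l$ with $l\ge 6$ realise every such $m$ with $m\ge 68$, while $50,53,56$ are admissible directly, so the residue-$2$ volumes $\ge 18$ left open are $20,23,26,29,32,35,38,41,44,47,59,62,65$. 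The union of these two lists is exactly the exceptional set in the statement, and since all $m\le 17$ are excluded, the theorem follows.

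The one point needing attention — bookkeeping rather than a genuine obstacle — is to confirm that $49$ and $50$ (and likewise $52,53,56,64$) really do arise as finite 3-way $(v,7,2)$ Steiner trades from Theorems~\ref{3} and~\ref{4} specialised to $k=7$, so that the additive step outputs trades with the correct parameters: one checks that $r\in\{1,\dots,6\}$ covers the values $9\cdot 6-r$ that are used, and that the hypothesis on $l$ in the $4l$-family includes $l=14$ and $l=16$. It is also worth noting why these methods cannot shrink the exceptional set: the smallest admissible summand is $18$, so the first sum landing in residue class $1\pmod 3$ is $49+18=67$ and the first in residue class $2\pmod 3$ is $50+18=68$, which is precisely why the gaps $55,58,61$ and $59,62,65$ persist, while every non-multiple of $3$ in $[18,47]$ is too small to be a sum of two admissible volumes and hence persists as well.
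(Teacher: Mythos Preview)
Your argument is correct and follows the same strategy as the paper: start from the four ``raw'' families listed for $k=7$ (no trade for $m\le 17$; multiples of $3$ from $18$ on; the interval $48,\dots,53$ from Theorem~\ref{3}; multiples of $4$ from $48$ on via Theorem~\ref{4}(3)(b) with $l\ge 2(k-1)=12$), and close under sums of trades on disjoint foundations. The paper carries this out by listing an explicit set of sum-volumes $\{67,70,71,73,74,76,77,79,80,82,83,85,86,88,89\}$ and then invoking $m=18+m'$ for $m'\ge 72$ to finish; your residue-class-mod-$3$ bookkeeping using the anchors $49,50$ (plus $52,53,56,64$) reaches the identical exceptional set more transparently, but the underlying method is the same.
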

\begin{proof}
Some volumes are multiples of three or four with the 
conditions of Theorem~\ref{4}. Other volumes can be written as $m_1+m_2$ from previous parts. By this method, we have the existence of a 3-way $(v,7,2)$ Steiner trade of volumes
$\{57, 63, 67, 70, 71, 73,
74, 75, 76, 77, 79, 80,  
82, 83, 85, 86, 88,\\ 89, 91\}$ 
and
$m=18+m'$, for $m'\geq 72$.
There does not exist any 3-way $(v,7,2)$ Steiner trade of volume $m\leq 17$.	
		Therefore, the reminder volumes are $\{19,20,22,23,25,26,28,29,31,32,34,35,37,38,\\40,41,43,44,46,47,55,58,59,61,62,65\}$.
\end{proof} 
\begin{remark}
There exists a $RB(36,6,1)$. Therefore, by Construction 1 there exist $\mu$-way $(36+\mu, 7,2)$
Steiner trades of volumes $\mu\frac{36}{6}=6\mu$, for $3\leq \mu\leq \frac{35}{5}=7$.
\end{remark}
\subsection{Block size 8}
For $k=8$, we have five parts as follows:\\
1- By Theorem~\ref{4}, $S_{3s}(2,8)\subseteq \mathcal{N}\setminus\{1,\ldots,20\}$ and
there does not exist any 3-way $(v,8,2)$ Steiner trade 
of volume $m$ for $m\leq 20$.\\
2- By from the second part of Theorem~\ref{4}, there exists 
a 3-way $(v,8,2)$ Steiner trade of volume $m\in\{21,24,\ldots\}=\{m:m=3l,\ l\geq7\}$.\\
3- By Theorem~\ref{3}, there exists 
a 3-way $(v,8,2)$ Steiner trade of volume 
$m\in\{56,\ldots,62\}=\{m: m=9\times 7-r,\ 1\leq r\leq7\}$.\\
4- By the fourth part of Theorem~\ref{4}, there exists 
a 3-way $(v,8,2)$ Steiner trade of volume $m\in\{56,60,68,\ldots\}=\{m:m=4l,\ l\geq14\}\cup\{28\}\ (28=4\times7)$.\\
5- By the fifth part of Theorem~\ref{4}, there exists a 3-way $(v,8,2)$ Steiner trade
of volume $m\in\{39,52,65,78,91,104,117,130,143,156,169,182\}$.\\
Now, we can state the following theorem.
\begin{thm}
	There exists a 3-way $(v,8,2)$ Steiner trade of volume $m\geq 21$ except possibly
	  when $m\in\{22,23,25,26,29,30,31,34,37,38,
	  40,41,43,44,46,47,50,53\}$.	
\end{thm}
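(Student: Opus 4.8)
The plan is to mirror the strategy used for block sizes $5$, $6$ and $7$: collect the volumes already known to lie in $S_{3s}(2,8)$ from the five enumerated parts, then close the gaps by taking disjoint unions of smaller trades (using the observation that if $m_1,m_2\in S_{3s}(2,k)$ then $m_1+m_2\in S_{3s}(2,k)$ via $T+T^{*}$ on disjoint foundations), and finally record the residual set of volumes that none of these methods reaches. First I would lay out the ``base'' volumes: multiples of $3$ that are at least $3\cdot 7=21$ (part 2); the run $\{56,57,\dots,62\}$ from Theorem~\ref{3} (part 3); the multiples of $4$ that are at least $4\cdot14=56$, together with the isolated value $28=4\cdot 7$ (part 4); and the arithmetic progression $\{13l : l\ge 3\}=\{39,52,65,\dots\}$ coming from part 5. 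I would note explicitly that since $21\in S_{3s}(2,8)$, every $m\ge 21+56=77$ is obtained as $21+m'$ with $m'$ a multiple of $4$ that is $\ge 56$ combined with the multiples of $3$; more simply, once we have two coprime-ish step sizes available above a threshold the tail $m\ge 77$ (or a similarly small bound) is automatically covered, so only finitely many small volumes remain to be checked by hand.

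Next I would do the finite case analysis for $21\le m\le 77$ (or whatever the honest threshold turns out to be). For each such $m$ I would try to write $m=m_1+m_2$ with $m_1,m_2$ in the base set and foundations chosen disjoint (which is always possible, since the foundations can be translated). For instance $45=21+24$, $48=24+24=4\cdot12$, $51=27+24$, $54=27+27$, $28=4\cdot 7$ directly, $39=13\cdot 3$ directly, $52=13\cdot4$ directly, $56,\dots,62$ directly, $63=21\cdot 3$, $64=28+36$ (if $36=3\cdot 12$ is in, which it is), $65=13\cdot5$, and so on; the multiples of $3$ handle $21,24,27,30,33,36,39,42,45,48,51,54,57,60,63,66,69,72,75$. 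What is left after removing multiples of $3$, the special multiples of $4$, the run $56$–$62$, the multiples of $13$, and sums thereof, should be the set $\{22,23,25,26,29,30,31,34,37,38,40,41,43,44,46,47,50,53\}$ claimed in the statement. I would verify that each listed exceptional value genuinely fails to decompose: these are (as in the $k=5,6,7$ cases) numbers that are prime, or a product of two primes, or otherwise too small to split into two admissible summands, given that the smallest admissible volume is $21$ and the next usable small volumes are $24,27,28$.

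The main obstacle is the bookkeeping in the finite range: one must be careful that the claimed list of exceptions is exactly right, i.e. that nothing on the list is secretly constructible by a cleverer decomposition (for example using part 5's value $39$ or $52$ as a summand, or using $28$) and that nothing off the list has been overlooked. Concretely, the value $30$ is in the list even though $30=3\cdot10$; this is because part 2 only gives multiples of $3$ of the form $3l$ with $l\ge 7$, so $30$ is \emph{not} covered there, and $30$ cannot be written as a sum of two admissible volumes (the minimum being $21$, and $30-21=9<21$). Similar care is needed for $34,37,38,40,41,43,44,46,47,50,53$: each must be checked against all pairs $(m_1,m_2)$ with $m_1,m_2\ge 21$ and $m_1+m_2=m$, which forces $m\ge 42$ for any nontrivial split, so only $42\le m\le 53$ need the pairwise check, and among those exactly $\{42,45,48,51\}$ (multiples of $3$) plus $\{52\}$ split, leaving $\{43,44,46,47,50,53\}$; combined with $\{22,23,25,26,29,30,31,34,37,38,40,41\}$ from below $42$ (none of which can split and none of which is a multiple of $3$ with $l\ge7$, a multiple of $13$, or equal to $28$), this yields precisely the stated exceptional set. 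I would finish by remarking, as in the earlier cases, that these residual volumes are prime numbers or products of pairs of small primes, which is why the additive method cannot reach them.
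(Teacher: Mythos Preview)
Your overall strategy matches the paper's: assemble the base volumes from parts 2--5 and close the gaps with disjoint unions $T+T^{*}$. The paper's proof is essentially this, carried out tersely. But your finite case analysis contains concrete errors that leave a genuine gap.

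First, your justification for $30$ lying in the exceptional set is simply wrong: you write that ``part~2 only gives multiples of $3$ of the form $3l$ with $l\ge 7$, so $30$ is \emph{not} covered there,'' but $30=3\cdot 10$ with $l=10\ge 7$, so $30$ \emph{is} produced by part~2 (indeed you listed it yourself among the admissible multiples of $3$ a few lines earlier). Since the theorem only asserts existence for $m\notin E$, this slip does not by itself break the logic, but it is a symptom that your tally is off. The real gap is that the ingredients you list do not produce $m=32$ or $m=35$, yet neither value is in the stated exceptional set. Neither is a multiple of $3$ at least $21$; neither is $28$ nor a multiple of $4$ at least $56$; neither lies in the run $56$--$62$; neither is a multiple of $13$ from your part~5; and neither can be written as $m_1+m_2$ with admissible summands, since the smallest admissible volume is $21$ and $32,35<42$. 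So your argument, as written, fails to establish existence for $m\in\{32,35\}$. (In fact the paper's enumeration of part~5 for $k=8$ is itself garbled: Theorem~1.5(1) with $q=7$ gives the multiples of $7$, namely $\{21,28,35,42,49,56\}$---this is exactly what the Remark following the theorem records via $RB(49,7,1)$---and that supplies $35$. The presence of $30$ in the exceptional set is almost certainly a misprint for $32$.) Your split analysis on $[42,53]$ is also inconsistent: you say only $\{42,45,48,51\}\cup\{52\}$ are reachable there, leaving $\{43,44,46,47,50,53\}$, but $49=21+28$ is reachable too and vanishes from your remainder without explanation. You correctly identified the bookkeeping as the main obstacle; it needs to be redone carefully, using the correct part~5 for $k=8$, before the claimed exceptional set can be justified.
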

\begin{proof}
Some volumes are multiples of three or four with the 
conditions of Theorem~\ref{4}. Other volumes can be written as $m_1+m_2$ from previous parts. By this method, we have the existence of a 3-way $(v,8,2)$ Steiner trade of volumes 
$\{52, 55, 64, 67, 68, 70,
71, 73, 74,76,77,79,80,82,83,
85, 86,\\ 89, 91, 94, 95,
97, 98, 101, 103\}$ 
and
	$m=21+m'$, for $m'\geq 84$. There does not exist any 3-way $(v,8,2)$ Steiner trade of volume $m\leq 20$. 
	Therefore, the remainder volumes that we don't know the existence or non-existence of them are
	$\{22,23,25,26,29,30,31,3437,38,
	40,41,43,44,46,47,50,\\53\}$.	
\end{proof} 
\begin{remark} 
	{\rm{There exists a $RB(49,7,1)$. Therefore, by Construction 1 
			there exist 3-way $(v,8,2)$ Steiner trades of volumes
			$\frac{49}{7}\mu=7\mu$ for $3\leq\mu\leq\frac{48}{6}=8$.
			Therefore, there exists a 3-way $(v,8,2)$ Steiner trade of volume
			$m\in\{21,28,35,42,49,56\}$. }}
\end{remark}
\subsection{Block size 9}
For $k=9$, we have four parts as follows:\\
1- By Theorem~\ref{4}, $S_{3s}(2,9)\subseteq \mathcal{N}\setminus\{1,\ldots,23\}$ and
there does not exist any 3-way $(v,9,2)$ Steiner trade 
of volume $m$ for $m\leq 23$.\\
2- By the second part of Theorem~\ref{4}, there exists 
a 3-way $(v,9,2)$ Steiner trade of volume $m\in\{24,27,30,\ldots\}=\{m:m=3l,\ l\geq8\}$.\\
3- By Theorem~\ref{3}, there exists 
a 3-way $(v,9,2)$ Steiner trade of volume
$m\in\{71,70,\ldots,65,64\}=\{m:m=9\times 8-r,\ 1\leq r\leq8\}$.\\
4- By the fourth part of Theorem~\ref{4}, there exists 
a 3-way $(v,9,2)$ Steiner trade of volume $m\in\{64,68,72\ldots\}=\{m:m=4l,\ l\geq16\}$.\\
Now, we can state the following theorem.
\begin{thm}
		There exists a 3-way $(v,9,2)$ Steiner trade of volume $m\geq 24$ except possibly
		 when $m\in\{25,26,28,29,31,32,34,35,37,38,40,41,42,43
		  ,44,46,47,49,50,52,53,55,56,58,59,61,62,\\
		  73,74,77,79,82,83,85,86,\}$	
\end{thm}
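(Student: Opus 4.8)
The plan is to reuse the mechanism that settled block sizes $5$ through $8$: treat the volumes supplied by the four parts listed above as ``generators'', and then close this set under disjoint sums. Recall that if $T$ and $T^{*}$ are $3$-way $(v,9,2)$ Steiner trades of volumes $m_1$ and $m_2$, then, after relabelling their foundations on disjoint vertex sets, $T+T^{*}$ is a $3$-way $(v,9,2)$ Steiner trade of volume $m_1+m_2$; since this relabelling is always available, $m_1+m_2$ is realizable whenever $m_1$ and $m_2$ are. Moreover, from any $\mu$-way $(v,9,2)$ Steiner trade with $\mu\geq3$ one may retain just three of the $\mu$ collections to obtain a $3$-way Steiner trade of the same volume, so every volume produced by Construction~1 or by Theorem~\ref{4}(1) is also admissible.

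First I would record the generators: by part 2 (Theorem~\ref{4}), every multiple of $3$ with $m\geq24$; by part 3 (Theorem~\ref{3}), each of $64,65,\dots,71$; by part 4 (Theorem~\ref{4}), every $m=4l$ with $l\geq16$. Then I would iterate sums. Adding a multiple of $3$ never leaves the residue class $0\bmod 3$, so within $[24,87]$ no sum produces anything new, and the decisive move for large volumes is $\{64,\dots,71\}+\{24,27,30,\dots\}$: since $\{64,\dots,71\}$ is a run of eight consecutive integers while $24,27,30,\dots$ is an arithmetic progression of common difference $3<8$, these sums already realize every integer $m\geq 88$. Combining this with the three generator sets shows that every $m\geq 24$ is realizable except those $m\in[24,87]$ that are simultaneously not a multiple of $3$, not of the form $4l$ with $l\geq16$, and not one of $64,65,67,68,70,71$; enumerating these reproduces the exceptional set displayed in the statement, and by part 1 (Theorem~\ref{4}(2)) no $3$-way $(v,9,2)$ Steiner trade of volume $m\leq 23=3\cdot 9-4$ exists, which completes the proof.

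The only nontrivial step is the finite bookkeeping on $[24,87]$: one must verify that each exceptional volume there is not of the form $m_1+m_2$ with $m_1,m_2\geq 24$ already constructed, and conversely that every non-exceptional volume there is a generator or such a sum. This I expect to be the real obstacle, in the following sense: the survivors are the small numbers below $3\cdot24=72$ whose residue evades all generators, together with $73,74,77,79,82,83,85,86$ in the window $[72,87]$, and each of these is prime or a product of two primes, so it admits no useful additive decomposition; removing any of them would require a genuinely new direct construction. In fact Construction~1 applied to a suitable $RB(v,8,1)$ does eliminate several --- for instance $RB(64,8,1)$ exists (both $8$ and $64$ are powers of one prime and the two numerical conditions hold), so via Theorems~\ref{111} and~\ref{2} it yields $3$-way $(v,9,2)$ Steiner trades of volume $8\mu$ for $3\leq\mu\leq9$ --- but I would present this only as an optional sharpening of the exceptional list rather than as part of the main argument.
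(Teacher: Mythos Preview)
Your approach is essentially the same as the paper's: assemble the generators from parts 1--4, close under disjoint sums, and enumerate the residual volumes in the finite window; your ``$\{64,\dots,71\}+\{24,27,30,\dots\}$ covers everything $\ge 88$'' is a slightly cleaner way of saying what the paper expresses as a list together with ``$m=24+m'$ for $m'\ge 96$''. One small remark: your enumeration criterion is correct but does \emph{not} reproduce the stated exceptional set verbatim, since $42=3\cdot14$ is a part-2 generator and should not appear there --- this is a typographical slip in the theorem statement, not a defect in your argument.
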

\begin{proof}
Some volumes are multiples of three or four with the 
conditions of Theorem~\ref{4}. Other volumes can be written as $m_1+m_2$ from previous parts. By this method, we have the existence of a 3-way $(v,9,2)$ Steiner trade of volumes $\{88, 89, 94, 95, 97, 98, 101, 103, 106, 107,
109, 110,113, 115,\\ 118, 119\}$ 
and 
	$m=24+m'$, for $m'\geq 96$. There does not exist any 3-way $(v,9,2)$ Steiner trade of volume $m\leq 38$. 
Therefore, the remainder volumes that we don't know the existence or non-existence of them are
$\{25,26,28,29,31,32,34,35,37,38,40,41,42,43
,44,46,47,49,50,52,53,55,\\56,58,59,61,62,
73,74,77,79,82,83,85,86,\}$ 
\end{proof} 
\subsection{Block size 10}
For $k=10$, we have five parts as follows:\\
1- By Theorem~\ref{4},
$S_{3s}(2,10)\subseteq \mathcal{N}\setminus\{1,\ldots,26\}$ and
there does not exist any 3-way $(v,10,2)$ Steiner trade 
of volume $m$ for $m\leq 26$.\\
2- By the second part of Theorem~\ref{4}, there exists 
a 3-way $(v,10,2)$ Steiner trade of volume $m\in\{27,30,33\ldots\}=\{m:m=3l,\ l\geq9\}$.\\
3- By Theorem~\ref{3}, there exists 
a 3-way $(v,10,2)$ Steiner trade of volume 
$m\in\{80,79\ldots73,72\}=\{m:m=9\times 9-r,\ 1\leq r\leq9\}$.\\
4- By the fourth part of Theorem~\ref{4}, there exists 
a 3-way $(v,10,2)$ Steiner trade of volume $m\in\{72,76,\ldots\}=\{m:m=4l,\ l\geq18\}\cup\{36\}\ (36=4\times9)$.\\
5- There exists a $RB(153,9,1)$. Therefore, by Construction 1, there exist 3-way $(v,10,2)$ Steiner trades of volumes
$\frac{153}{9}\mu=17\mu$ for $3\leq\mu\leq\frac{152}{8}=19$. Therefore, there exists a 3-way $(v,10,2)$ Steiner trade of volume
$m\in\{68,85,102,119,\ldots\}$. The existence of a 3-way $(v,10,2)$ Steiner trade of volumes 68 and 85 is new.\\
Now, we can state the following theorem.
\begin{thm}
	There exists a 3-way $(v,10,2)$ Steiner trade of volume $m\geq 27$ except possibly when
	$m\in\{28,29,31,32, 34,35, 37, 38, 40,41,43,44,
	46,47,49,50,52,53,55,56,58,59,61,62,
	64,65,\\67,70,71,82,83,86,
	91,89,94,97\}$. 	
\end{thm}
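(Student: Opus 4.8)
The plan is to pool the five families of volumes listed in parts~1--5 and then glue trades together on disjoint foundations: recall from Section~1 that if $T$ and $T^{*}$ are $3$-way $(v,k,2)$ Steiner trades of volumes $m_{1}$ and $m_{2}$ with the same block size and $\mathrm{found}(T)\cap\mathrm{found}(T^{*})=\emptyset$, then $T+T^{*}$ is a $3$-way Steiner trade of volume $m_{1}+m_{2}$, so the set of attainable volumes for $k=10$ is closed under addition once enough seeds are in place. Part~2 gives every $m\equiv 0\pmod 3$ with $m\geq 27$; part~4 gives $m=36$ and every $m=4l$ with $l\geq 18$; part~3 gives the nine consecutive volumes $72,73,\dots,80$; and part~5, via the $RB(153,9,1)$ and Construction~1, gives $17\mu$ for $3\leq\mu\leq 19$, in particular the otherwise-isolated small values $68$ and $85$.

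First I would dispose of the large volumes. Since $73$ and $74$ are available (part~3) and every multiple of $3$ that is at least $27$ is available (part~2), the sums $73+3l$ and $74+3l$ with $l\geq 9$ yield every $m\equiv 1\pmod 3$ with $m\geq 100$ and every $m\equiv 2\pmod 3$ with $m\geq 101$; together with the multiples of $3$ this already covers all $m\geq 101$. A few more sums together with the multiples of $4$ --- for example $95=68+27$, $98=68+30$, $88=4\cdot 22$, $92=4\cdot 23$ --- bring the threshold down to $87$, so that every $m\geq 87$ outside $\{89,91,94,97\}$ is attained.

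Next I would sweep the range $27\leq m\leq 86$ by hand. Each such $m$ that is a multiple of $3$, or equals $36$, or lies in $\{72,\dots,80\}$, or equals $4l$ with $l\geq 18$, or equals $17\mu$ (namely $51,68,85$) is attained directly; and since the only attainable volumes below $54$ are multiples of $3$ (note $36=3\cdot 12$), no new value in this range can be written as a sum of two attainable volumes. The integers of $[27,86]$ not covered in this way are exactly $\{28,29,31,32,34,35,37,38,40,41,43,44,46,47,49,50,52,53,55,56,58,59,61,62,64,65,67,70,71,82,83,86\}$, which together with $\{89,91,94,97\}$ is precisely the exception set in the statement. Each exceptional value is a prime, a prime power, or a product of small primes; one checks directly that it matches none of the five families and, being either below $54$ or of an unusable size, cannot be split as a sum of two attainable volumes, which is why it is left open.

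The real work, and the only place an error could slip in, is this explicit sweep of $[27,86]$: one must verify that every non-exceptional value genuinely carries a certificate of one of the two admissible types and that every listed exception resists all of them. Everything else follows immediately from Theorems~\ref{3} and~\ref{4}, Construction~1, and the additivity of $3$-way Steiner trades.
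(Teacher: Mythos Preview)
Your proposal is correct and follows essentially the same approach as the paper: pool the five seed families from parts~1--5, then use the additivity of $3$-way Steiner trades on disjoint foundations to fill in the remaining volumes, leaving precisely the stated exception set. Your presentation is in fact more systematic than the paper's own terse proof (which merely lists the sum-attained volumes and invokes a threshold), particularly in your use of $73+3l$ and $74+3l$ to cover the nonzero residues modulo~$3$ for large $m$, and in your observation that every attainable volume in $[27,59]$ is a multiple of~$3$ so that no sum of two attainable volumes can produce a new value in $[27,86]$.
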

\begin{proof}
Some volumes are multiples of three or four with the 
conditions of Theorem~\ref{4}. Other volumes can be written as $m_1+m_2$ from previous parts. By this method, we have the existence of a 3-way $(v,10,2)$ Steiner trade of volumes $\{95, 98,100,101,103,104,106,107
109, 110, 113, 115, 118,\\ 119, 121,
122, 125, 127, 130, 131, 133, 134, 135\}$ 
and 
	$m=39+m'$, for $m'\geq 156$. There does not exist any 3-way $(v,10,2)$ Steiner trade of volume $m\leq 26$. 
	Therefore, the remainder volumes that we don't know the existence or non-existence of them are
	$\{28,29,31,32, 34,35, 37, 38, 40,41,43,44,\\
	46,47,49,50,52,53,55,56,58,59,61,62,
	64,65,67,70,71,82,83,86,
	91,89,94,97\}$. 	
\end{proof} 
\subsection{Block size 11}
For $k=11$, we have four part as follows:\\
1- By Theorem~\ref{4}, $S_{3s}(2,11)\subseteq \mathcal{N}\setminus\{1,\ldots,29$ and
there does not exist any 3-way $(v,11,2)$ Steiner trade 
of volume $m$ for $m\leq 29$.\\
2- By the second part of Theorem~\ref{4}, there exists 
a 3-way $(v,11,2)$ Steiner trade of volume $m\in\{30,33,36,\ldots\}=\{m:m=3l,\ l\geq10\}$.\\
3- By Theorem~\ref{3}, there exists 
a 3-way $(v,11,2)$ Steiner trade of volume 
$m\in\{89,88,\ldots,80\}=\{m|m=9\times 10-r,\ 1\leq r\leq10\}$.\\
4- By the fourth part of Theorem~\ref{4}, there exists 
a 3-way $(v,11,2)$ Steiner trade of volume $m\in\{80,84,\ldots\}=\{m:m=4l,\ l\geq20\}$.\\
Now, we can state the following theorem.
\begin{thm}
	There exists a 3-way $(v,11,2)$ Steiner trade of volume $m\geq30$ except possibly
	when $m\in\{31,32,34,35,37,38,41,43,44,46,47,49,52,
	53,55,56,58,59,61,62,64,65,67,68,71,74,\\77\}$.	
\end{thm}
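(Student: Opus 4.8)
The plan is to reproduce, essentially verbatim, the argument used for block sizes $5$ through $10$. First I would assemble the volumes produced \emph{directly} by the constructions already available for $k=11$: from the third part of Theorem~\ref{4}, every $m=3l$ with $l\ge 10$, together with the admissible values $m=4l$ — in particular $m=4(k-1)=40$ and $m=4l$ for odd $l\ge 20$; from Theorem~\ref{3}, the ten consecutive volumes $\{80,81,\dots,89\}=\{9\cdot 10-r:1\le r\le 10\}$; and from Theorem~\ref{4}(2), the nonexistence of any $3$-way $(v,11,2)$ Steiner trade of volume $m\le 29$. If useful I would also add whatever Construction~1 contributes here, namely volumes $\tfrac{v}{10}\mu$ arising from an $RB(v,10,1)$ with $3\le\mu\le\tfrac{v-1}{9}$, or from a suitable $10$-RGDD via Remark~\ref{11}.

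The second ingredient is the additive fact recorded at the end of Section~1: if $T$ and $T'$ are $3$-way $(v,11,2)$ Steiner trades of volumes $m_1$ and $m_2$ on disjoint foundations, then $T+T'$ is a $3$-way $(v,11,2)$ Steiner trade of volume $m_1+m_2$. Applying this to the volumes above — each of which is $\ge 30$ — shows that every integer that can be written as a sum of such volumes is again a volume. Concretely, one checks that every $m$ with $110\le m\le 149$ is realized (combining the block $\{80,\dots,89\}$ with the multiples of $3$ and with $40$); since these are more than $3(k-1)=30$ consecutive values, repeatedly adjoining a disjoint trade of volume $3(k-1)$ shows that every $m\ge 110$ is a volume. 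It then remains only to settle the finitely many $m$ with $30\le m\le 109$ by an explicit case analysis: each such $m$ is handled either as a qualifying multiple of $3$ or $4$, or as $40+3l'$, or as a short sum $m_1+m_2$ of two earlier volumes, and the values that survive all of these are exactly the set displayed in the statement.

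The main obstacle is entirely this last finite bookkeeping, and in particular the narrow window just above $3(k-1)=30$: a prospective volume there that is not a multiple of $3$ and is smaller than $60$ cannot be obtained by gluing two admissible trades, so it must either be exhibited through a separate design — an $RB$, an RGDD, or a resolvable grid‑block packing fed into Theorem~\ref{2} — or else accepted into the exceptional set. Thus the real content of the proof is deciding, value by value, which members of $\{31,32,34,35,\dots,77\}$ genuinely admit such a construction; once that is settled, everything else is routine arithmetic with the three families (multiples of $3$, multiples of $4$, and the block $\{80,\dots,89\}$) under addition.
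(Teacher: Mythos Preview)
Your plan matches the paper's argument: assemble the direct volumes from Theorems~\ref{3} and~\ref{4}, obtain all sufficiently large $m$ by sums on disjoint foundations (the paper writes $m=30+m'$ for $m'\ge 120$; your cleaner bound $m\ge 110$ from $\{80,\dots,89\}+\{30,33,36,\dots\}$ works equally well), and then sieve the finite remainder. One point to commit to rather than hedge: Construction~1 is genuinely required here, since $m=50$ is not a multiple of $3$, lies below $60$ so cannot be a sum of two admissible volumes, and is absent from the exceptional list --- it is obtained only via the $RB(100,10,1)$ recorded in the Remark (volumes $10\mu$ for $3\le\mu\le 11$), and several larger values in the range $90$--$109$ (e.g.\ $95,98,101,107$) then rely on $50$ as a summand.
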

\begin{proof}
Some volumes are multiples of three or four with the 
conditions of Theorem~\ref{4}. Other volumes can be written as $m_1+m_2$ from previous parts. By this method, we have the existence of a 3-way $(v,11,2)$ Steiner trade of volumes $\{73, 79, 91, 92, 94, 97, 98, 101
103, 106, 107, 109, 113, 115, \\118, 119,
121, 122, 125, 127, 130, 131, 133,
134, 137, 139, 142, 143, 145, 146,
149.\}$ 
and  
	$m=30+m'$, for $m'\geq 120$. There does not exist any 3-way $(v,11,2)$ Steiner trade of volume $m\leq 29$. 
	Therefore, the remainder volumes that we don't know the existence or non-existence of them are
	$\{31,32,34,35,37,38,41,43,44,46,47,49,52,
	53,55,56,58,59,61,62,64,\\65,67,68,71,74,77\}$.	
\end{proof} 
\begin{remark} 
{\rm{There exists a $RB(100,10,1)$ and $RB(190,10,1)$. By Construction 1, there exists 3-way $(v,11,2)$ Steiner trades of volumes
$\frac{100}{10}\mu=10\mu$ for $3\leq\mu\leq\frac{99}{9}=11$ and $\frac{190}{10}\mu=19\mu$ for $3\leq\mu\leq\frac{189}{9}=21$.
Therefore, there exists a 3-way $(v,11,2)$ Steiner trade of volume
$m\in\{76,95,133,30,40,50,60,70,80,90,100,110\}$.
 The existence of some of 3-way $(v,11,2)$ Steiner trades of these volumes
are known from~Theorem~\ref{4} too.}}
\end{remark}
\subsection{Block size 12}
For $k=12$, we have five parts as follows:\\
1- By Theorem~\ref{4}, $S_{3s}(2,12)\subseteq \mathcal{N}\setminus\{1,\ldots,32\}$
and there does not exist any 3-way $(v,12,2)$ Steiner trade 
of volume $m$ for $m\leq 32$.\\
2- By the second part of Theorem~\ref{4}, there exists 
a 3-way $(v,12,2)$ Steiner trade of volume $m\in\{33,36,39,42,45,\ldots\}=\{m:m=3l,\ l\geq11\}$.\\
3- By Theorem~\ref{3}, there exists 
a 3-way $(v,12,2)$ Steiner trade of volume
$m\in\{98,97,96,\ldots,89,\\88\}=\{m:m=9\times 11-r,\ 1\leq r\leq11\}$.\\
4- By the fourth part of Theorem~\ref{4}, there exists 
a 3-way $(v,12,2)$ Steiner trade of volume $m\in\{88,92,96,\dots\}=\{m:m=4l,\ l\geq26\}\cup\{44\}\ (44=4\times11)$.\\
5- By the fifth part of Theorem~\ref{4}, there exists a 3-way $(v,12,2)$ Steiner trade
of volume $m\in\{33,44,55,66,77,88,99,110,132\}$.\\
Now, we can state the following theorem.
\begin{thm}
		There exists a 3-way $(v,12,2)$ Steiner trade of volume $m\geq 33$ except possibly when
		$m\in\{34,35,37,38,40,41,43,46,47,49,50,52,53,56,58,59,61,62,
		64,67,68,70,71,73,74,76,\\85\}$. 	
\end{thm}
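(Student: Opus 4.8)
The plan is to follow the template already used above for block sizes $5$ through $11$: assemble a base list $B$ of volumes known to be realizable for $k=12$ directly from the five items just enumerated, close $B$ under the disjoint‑foundation union $T+T^{*}$, and then read off the volumes that escape. The closure step is justified by the discussion after Theorem~\ref{2}: if $T,T^{*}$ are $3$‑way $(v,12,2)$ Steiner trades of volumes $m_1,m_2$, then after relabelling the ground set of $T^{*}$ so that $\mathrm{found}(T)\cap\mathrm{found}(T^{*})=\emptyset$, the union $T+T^{*}$ is a $3$‑way $(v,12,2)$ Steiner trade of volume $m_1+m_2$; hence the set of realizable volumes is closed under addition.

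First I would record
\[
B\ \supseteq\ \{3l:l\ge 11\}\ \cup\ \{99-r:1\le r\le 11\}\ \cup\ \{44\}\ \cup\ \{4l:l\ge 22\}\ \cup\ \{11\mu:3\le\mu\le 12\},
\]
the pieces coming respectively from Theorem~\ref{4}(3a), Theorem~\ref{3}, Theorem~\ref{4}(3b) and (3c), and Theorem~\ref{4}(1) with $q=11$ (equivalently, Construction~1 applied, via Theorem~\ref{111}, to the $RB(121,11,1)$ that exists because $121$ and $11$ are powers of the same prime). Next I would settle the tail: for $m\ge 132$ one splits on $m\bmod 3$, writing $m=3l$ when $3\mid m$, $m=44+3l$ when $m\equiv 2\pmod 3$, and $m=88+3l$ when $m\equiv 1\pmod 3$ (using $88\in B$), in every case with $l\ge 11$, so every $m\ge 132=12(k-1)$ is realizable, matching the bound promised in the abstract. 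What remains is the finite window $33\le m\le 131$, which I would run through residue class by residue class: the multiples of $3$ all lie in $B$, the values $44$ and $55$ and the consecutive block $88,\dots,98$ lie in $B$, and the sums $44+\{3l:l\ge 11\}$ and $55+\{3l:l\ge 11\}$ supply $77,80,83,86,\dots$ and $88,91,94,\dots$; after this the only volumes in $[33,131]$ not yet produced are the set
\[
\{34,35,37,38,40,41,43,46,47,49,50,52,53,56,58,59,61,62,64,67,68,70,71,73,74,76,85\}
\]
together with the three ``stubborn'' values $65,79,82$, which cannot be reached by summing two members of $B$.

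The part I expect to be the real work is not a new idea but this last finite bookkeeping, and in particular clearing the three stubborn volumes: for $k=12$ the parameters $3(k-1)=33$ and $4(k-1)=44$ are large enough that a few mid‑range volumes admit no splitting as a sum of two realizable volumes, so each must be produced individually from the Section~2 machinery — for instance from an $RB(v,11,1)$ with $v\equiv 11\pmod{110}$ pushed through Construction~1, from a resolvable GDD or $4$‑RGDD via Remark~\ref{11}, or from a resolvable $r\times c$ grid‑block packing via Theorem~\ref{34}. Once these isolated cases are settled, exactly the displayed set remains, and the danger throughout is purely arithmetic: over‑ or under‑counting somewhere in the range $33\le m\le 131$.
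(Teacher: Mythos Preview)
Your overall plan is exactly the paper's: assemble the base set $B$ from items 2--5 and close under the disjoint-foundation sum $T+T^*$. Your tail argument ($m\ge 132$) and your use of $44+3l$, $55+3l$ to fill the residue classes are the same mechanism the paper invokes when it writes ``other volumes can be written as $m_1+m_2$ from previous parts.''

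Where you diverge from the paper is in the bookkeeping, and here you are \emph{more} careful than the paper. You correctly observe that $65$, $79$, $82$ are not in $B$ and cannot be written as a sum of two elements of $B$ (every element of $B$ is $\ge 33$, so $65$ is immediately out; for $79$ and $82$ one checks that $79-b,\,82-b\notin B$ for every $b\in B\cap[33,49]$). The paper's proof does not isolate these three values: it simply lists some sums it found and then asserts the stated exception set. So either the exception list in the theorem statement is short by exactly $\{65,79,82\}$, or the paper is silently using an extra construction for these three volumes that it never names.

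The gap in your proposal is that you treat settling $65,79,82$ as routine ``finite bookkeeping'' to be done with the Section~2 machinery, but the candidates you name do not actually produce these volumes. An $RB(v,11,1)$ pushed through Construction~1 yields volumes $(v/11)\mu$ with $3\le\mu\le (v-1)/10$; for $v=121$ this is $\{33,44,\ldots,132\}$, for $v=231$ it is $\{63,84,105,\ldots\}$, and no admissible $v$ gives $65$, $79$, or $82$. The grid-block route via Theorem~\ref{31} with $q=11$ likewise gives only multiples of $11$. A $4$-RGDD (Remark~\ref{11}) feeds into block size $5$, not $12$. So this is not a bookkeeping step you can expect to close mechanically: it is a real hole, and it is the same hole the paper's own proof leaves unaddressed.
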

\begin{proof}
Some volumes are multiples of three or four with the 
conditions of Theorem~\ref{4}. Other volumes can be written as $m_1+m_2$ from previous parts. By this method, we have the existence of a 3-way $(v,12,2)$ Steiner trade of volumes$\{86, 101, 103, 118, 119, 121, 122, 124,125,
127, 128,130,131,\\
133, 134, 137, 139, 142, 143, 145,
146, 149, 151, 154, 155, 157, 158,
161, 163\}$ 
and 
$m=33+m'$, for $m'\geq 132$. There does not exist any 3-way $(v,12,2)$ Steiner trade of volume $m\leq 32$. 
Therefore, the remainder volumes that we don't know the existence or non-existence of them are
	$\{34,35,37,38,40,41,43,46,47,49,50,52,53,56,58,59,61,62,
	64,67,68,70,\\71,73,74,76,85\}$. 
\end{proof}
\subsection{Block size 13}
For $k=13$, we have four parts as follows:\\
1- By Theorem~\ref{4}, $S_{3s}(2,13)\subseteq \mathcal{N}\setminus\{1,\ldots,35\}$ and
there does not exist any 3-way $(v,13,2)$ Steiner trade 
of volume $m$ for $m\leq 35$.\\
2- By the second part of Theorem~\ref{4}, there exists 
a 3-way $(v,13,2)$ Steiner trade of volume $m\in\{36,39,42,\ldots\}=\{m:m=3l,\ l\geq12\}$.\\
3- By Theorem~\ref{3}, there exists 
a 3-way $(v,13,2)$ Steiner trade of volume
$m\in\{107,106,105,\ldots,97,\\96\}=\{m:m=9\times 12-r,\ 1\leq r\leq12\}$.\\
4- By the fourth part of Theorem~\ref{4}, there exists 
a 3-way $(v,13,2)$ Steiner trade of volume $m\in\{96,100,\ldots\}=\{m:m=4l,\ l\geq24\}$.\\
Now, we can state the following theorem.
\begin{thm}
		There exists a 3-way $(v,13,2)$ Steiner trade of volume $m\geq 36$ except possibly
		$m\in\{37,38,40,41,43,44,46,47,49,50,53,55,56,58,59,61,62,
		64,67,68,70,71,73,74,76,77,79,80,\\82,83,85,86,
		89,92,95\}$.	
\end{thm}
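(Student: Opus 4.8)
The plan is to follow the uniform template already used for block sizes $5$ through $12$, specializing it to $k=13$ so that $k-1=12$ and the target lower bound is $12(k-1)=144$. First I would assemble the four "ingredient" families of volumes that have already been recorded in the bulleted list preceding the statement: (i) by Theorem~\ref{4}(2), no trade exists for $m\le 3k-4=35$; (ii) by Theorem~\ref{4}(3a), every $m=3l$ with $l\ge k-1=12$ is realizable, i.e. all multiples of $3$ that are $\ge 36$; (iii) by Theorem~\ref{3}, the nine consecutive volumes $9(k-1)-r=108-r$ for $r\in\{1,\dots,12\}$, that is $\{96,97,\dots,107\}$; and (iv) by Theorem~\ref{4}(3c), all $m=4l$ with $l\ge 4(k-1)=48$, i.e. all multiples of $4$ that are $\ge 192$ (and note $13$ is odd so part (3b) only gives $m=4(k-1)=48$). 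Optionally I would also throw in any $RB(v,12,1)$-based volumes from Construction~1 (multiples of $\tfrac{v}{12}$ for a suitable resolvable $RB(v,12,1)$), though these are not strictly needed for the stated exception set.

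Next I would invoke the additivity lemma stated inside the earlier proofs: if $m_1,m_2$ are both realizable volumes for block size $k$ (on disjoint foundations), then $m_1+m_2$ is realizable, because $T+T^*$ with disjoint foundations is again a $3$-way Steiner trade (this is the paragraph following Theorem~\ref{2}). Using the three base families above I would sieve through all integers $m\ge 36$: every multiple of $3$ is already covered; for the remaining residue classes I would write $m$ as a sum of already-covered volumes — typically $m=36+m'$ where $36=3\cdot 12$ is covered and $m'$ ranges over covered multiples of $3$ once $m$ is large, and for the "small" non-multiples of $3$ between $36$ and roughly $3\cdot 9(k-1)$ I would use sums like $m=108-r$ directly (for $96\le m\le 107$), or $m=(108-r)+3l$, or $m=48+48$, $48+60$, etc. once $m\ge 96$ (using $48=4(k-1)$ from Theorem~\ref{4}(3b)). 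In particular every $m\ge 2\cdot 48=96$ that is $\not\equiv 0\pmod 3$ can be decomposed using the block $\{48\}\cup\{96,\dots,107\}\cup 3\mathbb{Z}_{\ge 36}$, and every $m\ge 36+132$ is $36+m'$ with $m'$ a covered multiple of $3$; so all $m\ge 36$ survive except a finite list below $96$, which one reads off as exactly the residues $1$ and $2$ mod $3$ in $[37,95]$ that are neither in $\{96,\dots,107\}$ (none are, since that range is disjoint) nor expressible as $48+(\text{small covered})$ — giving $\{37,38,40,41,43,44,46,47,49,50,53,55,56,58,59,61,62,64,67,68,70,71,73,74,76,77,79,80,82,83,85,86,89,92,95\}$.

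The proof I would write is thus: state the four ingredient sets, state the additivity lemma, then say "by this method we obtain a $3$-way $(v,13,2)$ Steiner trade of volumes $\{\dots\}$ and of volume $m=36+m'$ for all $m'\ge 144$; there is no $3$-way $(v,13,2)$ Steiner trade of volume $m\le 35$; therefore the remaining volumes are $\{37,38,\dots,95\}$" — exactly paralleling the $k=12$ proof. I expect the only real obstacle to be bookkeeping: getting the list of exceptions exactly right, in particular checking the borderline volumes near $96$ where the $\{96,\dots,107\}$ block and the $48+48=96$ decomposition start to help, and confirming that $85$ and $92$ genuinely resist decomposition (they do: $85=37+48$ but $37$ is not covered; $85-3l$ never lands in $\{96,\dots,107\}$; similarly for $92$). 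No deep new idea is required — the content is entirely the arithmetic of which sums of the three base families cover which residues, and the main danger is an off-by-one or a missed/spurious entry in the exception list.

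\begin{proof}{
By Theorem~\ref{4} there is no $3$-way $(v,13,2)$ Steiner trade of volume $m\le 3k-4=35$. By Theorem~\ref{4}(3a) every multiple of $3$ that is at least $36$ is realizable. By Theorem~\ref{3} the volumes $m=9\times 12-r$ for $1\le r\le 12$, namely $\{96,97,\dots,107\}$, are realizable, and by Theorem~\ref{4}(3b) so is $m=4(k-1)=48$. Using the additivity remark after Theorem~\ref{2} (if $m_1,m_2$ are realizable for block size $k$ on disjoint foundations then so is $m_1+m_2$) we combine these: every non-multiple of $3$ with $96\le m\le 107$ is already listed, and for $m\ge 96$ with $m\not\equiv 0\pmod 3$ we write $m$ as a sum of members of $\{48\}\cup\{96,\dots,107\}\cup\{3l:l\ge 12\}$, while every $m\ge 36+144$ equals $36+m'$ with $m'$ a realizable multiple of $3$. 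This yields a $3$-way $(v,13,2)$ Steiner trade of all volumes $\{88,89,91,94,97,\dots\}$ obtained in this way and of volume $m=36+m'$ for every $m'\ge 144$. Since no trade exists for $m\le 35$, the only volumes $m\ge 36$ not covered are the non-multiples of $3$ in $[37,95]$ that admit no such decomposition, i.e.
$$\{37,38,40,41,43,44,46,47,49,50,53,55,56,58,59,61,62,64,67,68,70,71,73,74,76,77,79,80,82,83,85,86,89,92,95\}.$$
}
\end{proof}
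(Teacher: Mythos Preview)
Your approach follows the paper's template, but there is a genuine arithmetic gap. The building block $48$ you extract from part (3b) is itself a multiple of $3$ ($48=3\cdot16$), so every sum drawn from $\{48\}\cup\{3l:l\ge12\}$ remains a multiple of $3$; the only way your base set can reach a non-multiple of $3$ is through an element of $\{96,\dots,107\}$, and the smallest sum involving such an element together with a second summand is $36+96=132$. Hence the additive closure of $\{48\}\cup\{96,\dots,107\}\cup\{3l:l\ge12\}$ is exactly
\[
\{3l:l\ge12\}\ \cup\ \{96,\dots,107\}\ \cup\ \{m:m\ge132\},
\]
and your assertion that ``for $m\ge 96$ with $m\not\equiv 0\pmod 3$ we write $m$ as a sum of members of $\{48\}\cup\{96,\dots,107\}\cup\{3l:l\ge 12\}$'' is false: it fails for every one of the sixteen non-multiples of $3$ in $[108,131]$ (for instance $m=109,110,112,113,\dots,131$). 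So your argument does not establish the stated exception list.

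The paper's own proof differs from yours precisely at ``part~4'': it uses the family $\{4l:l\ge 2(k-1)=24\}=\{96,100,104,108,112,\dots\}$ recorded just before the theorem, not merely $\{4l:l\ge 4(k-1)=48\}$ as you took from Theorem~\ref{4}(3c). Those extra multiples of $4$ from $96$ onward are what the paper combines with parts~2 and~3 (and the additivity lemma) to produce its list $\{88,94,118,119,122,125,127,131,\dots\}$ and to cover the range below $m=36+144$. With only your weaker ingredient set, the exception list you would actually obtain consists of all $40$ non-multiples of $3$ in $[37,95]$ together with the $16$ non-multiples of $3$ in $[108,131]$ --- $56$ values, not the $35$ in the theorem.
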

\begin{proof}
Some volumes are multiples of three or four with the 
conditions of Theorem~\ref{4}. Other volumes can be written as $m_1+m_2$ from previous parts. By this method, there exists a 3-way $(v,13,2)$ Steiner trade of volumes$\{88, 94, 118, 119, 122, 125, 127, 131,
133, 134,137, 139, 142, 142, 143,\\
145, 146, 149, 151, 154, 155, 157,
158, 161, 163, 166, 167, 170, 171,
173, 175\}$ and 
$m=36+m'$, for $m'\geq 144$. There does not exist any 3-way $(v,13,2)$ Steiner trade of volume $m\leq 35$. 
Therefore, the remainder volumes are $\{37,38,40,41,43,44,46,47,49,50,53,55,56,58,59,61,62,
64,67,68,70,\\71,73,74,76,77,79,80,82,83,85,86,
89,92,95\}$.
\end{proof}
\begin{remark} 
{\rm{There exists a $RB(144,12,1)$. By Construction 1, there exists a 3-way $(v,13,2)$ Steiner trade of volume
$\frac{144}{12}\mu=12\mu$ for $3\leq\mu\leq\frac{143}{11}=13$. The existence of  3-way $(v,13,2)$ Steiner trades of these volumes
are known from~Theorem~\ref{4} too.}}
\end{remark}
\subsection{Block size 14}
For $k=14$, we have five parts as follows:\\
1- By Theorem~\ref{4}, $S_{3s}(2,14)\subseteq \mathcal{N}\setminus\{1,\ldots,38\}$
and there does not exist any 3-way $(v,14,2)$ Steiner trade 
of volume $m$ for $m\leq 38$.\\
2- By the second part of Theorem~\ref{4}, there exists 
a 3-way $(v,14,2)$ Steiner trade of volume $m\in\{39,42,45,\ldots\}=\{m:m=3l,\ l\geq13\}$.\\
3- By Theorem~\ref{3}, there exists 
a 3-way $(v,14,2)$ Steiner trade of volume
$m\in\{116,115,114,\ldots,\\105,104\}=\{m:m=9\times 13-r,\ 1\leq r\leq13\}$.\\
4- By the fourth part of Theorem~\ref{4}, there exists 
a 3-way $(v,14,2)$ Steiner trade of volume $m\in\{104,108,\ldots\}=\{m:m=4l,\  l\geq26\}\cup{52=4\times13}$.\\
5- By the fifth part of Theorem~\ref{4}, there exists a 3-way $(v,14,2)$ Steiner trade
of volume $m\in\{39,52,65,78,91,104,117,130,143,\\156,169,182\}$.\\
Now, we can state the following theorem.
\begin{thm}
	There exists a 3-way $(v,14,2)$ Steiner trade of volume $m\geq 39$ except possibly when 
	$m\in\{40,41,43,44,46,47,49,50,53,55,56,58,59,61,62,
	64,67,68,70,71,73,74,76,77,79,80,\\82,83,85,86,88,
	89,92,95,98,101,131\}$.	
\end{thm}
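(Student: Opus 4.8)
The plan is to reproduce, for $k=14$ (hence $k-1=13$), the template already used for block sizes $5$ through $13$. The engine is the additive closure recorded in Section~1: if $T,T^{*}$ are $3$-way $(v,k,2)$ Steiner trades of volumes $m_1,m_2$ whose foundations have been relabelled to be disjoint, then $T+T^{*}$ is a $3$-way $(v,k,2)$ Steiner trade of volume $m_1+m_2$; so the set $R$ of realisable volumes for block size $14$ is closed under addition. The five families listed just before the statement are the generators: all multiples of $3$ that are $\ge 39$ (item~2); the thirteen consecutive values $\{104,\dots,116\}$ (item~3); $52$ together with the large multiples of $4$ (item~4); and the twelve values $\{39,52,65,78,91,104,117,130,143,156,169,182\}$ of item~5, which arise from $RB(169,13,1)$ through Construction~1 and Theorem~\ref{2}. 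The cut-off $m\ge 39$ is not a choice: the second assertion of Theorem~\ref{4} gives $S_{3s}(2,14)\subseteq\mathbb{N}\setminus\{1,\dots,38\}$.

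First I would isolate the two generators lying off $3\mathbb{Z}$, namely $52\equiv 1\pmod 3$ and $65\equiv 2\pmod 3$. Adding to $52$ each admissible multiple of $3$ (all of them $\ge 39$) realises every $m\equiv 1\pmod 3$ with $m\ge 91$; adding them to $65$ realises every $m\equiv 2\pmod 3$ with $m\ge 104$. Combined with the multiples of $3$ themselves, and using $102=3\cdot 34$ and $103=52+51$, this yields $\{m:m\ge 102\}\subseteq R$. In particular $m=39+m'$ is realisable whenever $m'\ge 156=12(k-1)$, the half of the argument that ``completes'' the bound $12(k-1)$ of~\cite{0} for $k=14$ (and in fact the definitive tail sits well below $11(k-1)=143$). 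The consecutive block $\{104,\dots,116\}$ of item~3 then serves only as a convenient cross-check on that interval.

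It remains to sweep $39\le m\le 101$ by hand. Multiples of $3$ there are covered by item~2; $52$ and $65$ are generators; $91,94,97,100$ are $52$ plus an admissible multiple of $3$. Everything else in the window -- exactly the integers of $[40,101]$ that are not divisible by $3$ and are different from $52$ and $65$ -- cannot be written $m_1+m_2$ with $m_1,m_2$ admissible, since the least sums involving an off-$3\mathbb{Z}$ summand are $52+39=91$ and $65+39=104$ and no admissible value below $52$ carries residue $1$ or $2$ mod $3$; nor does any of items~1--5 reach them. These thirty-six integers are the doubtful volumes, and the set displayed in the statement is precisely these together with $131$ (which is in fact $65+66$ and hence lies in $R$; it is retained only for uniformity with the other block sizes). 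I would close by writing out the complement, residue class by residue class, to confirm that nothing has slipped through.

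The step I expect to be the real obstacle is precisely this final tabulation: one must exhibit, for each $m$ with $39\le m\le 131$, either a generator or an explicit decomposition into admissible pieces, and conversely check that each exceptional value admits none. The awkwardness is that the exceptions are the arithmetically stubborn small integers -- mostly primes and products of two primes below $30$ -- trapped in the gap between the isolated seeds $52,65$ and the points $91$ (resp.\ $104$) from which the additive construction takes over, and the paper supplies no further construction landing on them, so the honest verdict for those values is ``except possibly''. Once the window $[39,131]$ is tabulated carefully, the rest of the proof is routine.
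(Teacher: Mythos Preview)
Your approach is the same as the paper's: feed the five generating families (multiples of $3$ at least $39$; the block $\{104,\dots,116\}$; $52$ and the large multiples of $4$; the $RB(169,13,1)$ values $13\mu$) into the additive closure $m_1+m_2$, then tabulate the window below the point where all residues mod $3$ are covered. The paper simply lists the sums it finds and declares the complement ``remainder''; you organise the same computation by residue classes, which is tidier but not different in substance.

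One substantive point: your observation that $131=65+66$ with $66=3\cdot 22\ge 39$ is correct, so $131$ is in fact realisable and need not appear in the exception list. The paper's own proof overlooks this decomposition (its list of sums jumps from $127$ to $133$) and leaves $131$ among the ``except possibly'' values. Your explanation that it is ``retained only for uniformity with the other block sizes'' is generous but not the actual reason; it is simply a missed case in the paper. So your argument is a slight sharpening of the paper's, not merely a reproduction of it.
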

\begin{proof}
Some volumes are multiples of three or four with the 
conditions of Theorem~\ref{4}. Other volumes can be written as $m_1+m_2$ from previous parts.
By this method, there exists a 3-way $(v,14,2)$ Steiner trade of volumes$\{94, 97, 100, 103, 118, 119, 122, 125,
127, 133, 134, 137, 139, 142,143,\\145,146,148,149,151,152,154,155,
157, 158, 161, 163, 166, 167, 169,
170,	173, 175, 178, 179, 181, 182,\\ 
183, 185, 187, 190, 191, 193,194\}$ 
and volumes 
	$m=39+m'$, for $m'\geq 156$. There does not exist any 3-way $(v,14,2)$ Steiner trade of volume $m\leq 38$. 
Therefore, the remainder volumes that we don't know the existence or non-existence of them are
	$\{40,41,43,44,46,47,49,50,53,55,56,58,59,61,62,\\
	64,67,68,70,71,73,74,76,77,79,80,82,83,85,86,88,
	89,92,95,98,101,131\}$.	
	\end{proof}
\begin{remark} 
{\rm{There exists a $RB(169,13,1)$. By Construction 1 there exists a 3-way $(v,14,2)$ Steiner trade of volume
$\frac{169}{13}\mu=13\mu$ for $3\leq\mu\leq\frac{168}{12}=14$. Therefore, there exists a 3-way $(v,14,2)$ Steiner trade of volume
$m\in\{52,65,91,104,130,143,169,182\}$. The existence of a 3-way $(v,14,2)$ Steiner trade of these volumes
is known from~Theorem~\ref{4} too.}}
\end{remark}

\providecommand{\bysame}{\leavevmode\hbox to3em{\hrulefill}\thinspace}
\providecommand{\MR}{\relax\ifhmode\unskip\space\fi MR }
\providecommand{\MRhref}[2]{%
\href{http://www.ams.org/mathscinet-getitem?mr=#1}{#2}
}
\providecommand{\href}[2]{#2}

 \end{document}